\newtheorem{theorem}{Theorem}[section] 
\newtheorem{proposition}[theorem]{Proposition}
\newtheorem{lemma}[theorem]{Lemma}
\newtheorem{corollary}[theorem]{Corollary}
\newtheorem{defnthm}[theorem]{Definition-Theorem}
\theoremstyle{definition}
\newtheorem{definition}[theorem]{Definition}
\newtheorem{notation}[theorem]{Notation}
\newtheorem{construction}[theorem]{Construction}
\newtheorem{construction-notation}[theorem]{Construction-Notation}
\theoremstyle{remark}
\newtheorem{remark}[theorem]{Remark}
\newtheorem{remarks}[theorem]{Remarks}
\numberwithin{equation}{section}
\newcounter{myenum}
\renewcommand\thetheorem{\@arabic\c@section.\@arabic\c@theorem}
\newcounter{subtheorem}
\renewcommand\thesubtheorem{\thetheorem.\@arabic\c@subtheorem}
\newcommand\subtheorem{\stepcounter{subtheorem}\par\noindent\protect\textbf{\textup{(\thesubtheorem)}}\quad}
\newcommand{\cal}{\mathcal}
\newcommand{\spec}{\text{Spec }}
\newcommand{\mf}{\mathfrak}
\newcommand{\dotimes}{{\otimes}^{\textbf{L}}}
\newcommand{\comment}[1]{}
\newcommand{\noi}{\noindent}
\newcommand{\scr}{\mathscr}
\newcommand{\bb}{\mathbb}
\newcommand{\into}{\hookrightarrow}
\newcommand{\xym}{\xymatrix}
\newcommand{\sss}{\smallskip}
\DeclareMathOperator{\ord}{ord}
\DeclareMathOperator{\Tor}{Tor}
\DeclareMathOperator{\supp}{Supp}
\DeclareMathOperator{\Hom}{Hom}
\DeclareMathOperator{\ch}{ch}
\DeclareMathOperator{\Td}{Td}
\DeclareMathOperator{\Div}{Div}
\DeclareMathOperator{\pic}{Pic}
\DeclareMathOperator{\parf}{Parf}
\DeclareMathOperator{\coh}{Coh}
\DeclareMathOperator{\cdiv}{CDiv}
\begin{document}

\title{The Hilbert-Chow morphism and the incidence divisor}
\author{Joseph Ross}
\address{Universit\"at Duisburg-Essen, Campus Essen, Fachbereich Mathematik}
\address{http://www.esaga.uni-due.de/joseph.ross/}
\email{joseph.ross@uni-due.de}

\subjclass[2010]{Primary 14C05}

\keywords{Chow variety, Hilbert scheme}

\date{September 29, 2010}

\maketitle
\begin{abstract} 

For a smooth projective variety $P$, we construct a Cartier divisor supported on the incidence locus in $\mathscr{C}_a (P) \times \mathscr{C}_{\dim(P) -a - 1}(P)$.  There is a natural definition of the corresponding line bundle on a product of Hilbert schemes, and we show this bundle descends to the Chow varieties.  This answers a question posed by Mazur.
\end{abstract}

\section{Introduction}

Let $(P, \cal{O}_P(1))$ be a smooth projective variety of dimension $n$ over an algebraically closed field $k$.  A \textit{$d$-dimensional algebraic cycle} on $P$ is a finite formal linear combination $\sum a_i Z_i$, where the $Z_i$s are $d$-dimensional integral closed subschemes of $P$ and the $a_i$s are integers.  The \textit{degree} of the $d$-dimensional cycle $\sum a_i Z_i$ is the integer $\sum a_i (\deg_{\cal{O}_P(1)}Z_i )$; and $\sum a_i Z_i$ is \textit{effective} if the $a_i$s are nonnegative.  The \textit{Chow variety} $\mathscr{C}_{d,d^\prime}(P)$ parameterizes algebraic cycles on $P$.  In particular, there is a bijection between $\mathscr{C}_{d,d^\prime}(P)(k)$ and the set of effective algebraic cycles on $P$ of dimension $d$ and degree $d^\prime$.  When the degree plays no essential role, we write $\scr{C}_d(P)$ for the disjoint union $\coprod_{d'} \scr{C}_{d,d'}(P)$ with the understanding we may, if necessary, work one discrete invariant at a time.  A general line of inquiry is to understand how the geometry of $P$ is reflected in moduli spaces associated to $P$.  In particular, for nonnegative integers $a$ and $b$ such that $a+b+1 = n$, one would like to understand the structure of the \textit{incidence locus} $\mathscr{I} = \{ (A,B) | A \cap B \neq \emptyset \} \hookrightarrow \mathscr{C}_a(P) \times \mathscr{C}_b(P)$.  

\sss Over $\mathbb{C}$, Mazur constructs a Weil divisor supported on the incidence locus as follows.  Consider the diagram of schemes:

$$\xymatrix{
P \times \mathscr{C}_a(P) \times \mathscr{C}_b(P) \ar[r]^-{\Delta} \ar[d]^-{{pr}_{23}} & P \times P \times \mathscr{C}_a(P) \times \mathscr{C}_b(P) \\
\mathscr{C}_a(P) \times \mathscr{C}_b(P) \\ }$$

\noi Let $U_a, U_b$ denote the universal cycles on $P \times \mathscr{C}_a(P)$, $P \times \mathscr{C}_b(P)$ respectively (these exist in characteristic zero).  Since $\Delta$ is a local complete intersection morphism, there is a refined Gysin homomorphism ${\Delta}^!$, as constructed in \cite[6.2]{Ful}.  Using standard operations in intersection theory, one has ${{pr}_{23}}_\ast {\Delta}^! (U_a \boxtimes U_b)$, a cycle of codimension 1 on $\mathscr{C}_a(P) \times \mathscr{C}_b(P)$.  The main question of \cite{M} is whether ${{pr}_{23}}_\ast {\Delta}^! (U_a \boxtimes U_b)$ is Cartier.  The main result of this paper is a positive answer to Mazur's question.

\medskip

\begin{theorem} \label{nice main thm} Let $U \subset \scr{C}_a(P) \times \scr{C}_b(P)$ denote the locus of disjoint cycles, i.e., the complement of the incidence locus $\scr{I}$.  Let $U' \subset \overline{U}$ denote the union of products $C_a \times C_b$ of irreducible components $C_a \subset \scr{C}_a(P), C_b \subset \scr{C}_b(P)$ over which the universal cycles intersect properly. 
\begin{itemize}
\item There is a Cartier divisor $D$ on $\overline{U}$ which is supported on $\overline{U} - U$.
\item The restriction of $D$ to $U'$ is an effective Cartier divisor.
\item Let $T$ be the spectrum of a discrete valuation ring $R \supset k$, with generic point $\eta$.  Let $g: T \to \overline{U} \subset \scr{C}_a(P) \times \scr{C}_b(P)$ be a morphism corresponding to cycles $Z,W$ on $P \times T$, and such that $g(\eta) \in U$.  Let $s_D$ denote the canonical section of the line bundle $\cal{O}_{\overline{U}}(D)$.  Then we have
$$ \ord g^*(s_D) = \deg (Z \cdot W) \in \bb{Z},$$
\end{itemize}
where $Z \cdot W \in A_0(P \times T)$ is the class constructed in \cite[20.2]{Ful}.
\end{theorem}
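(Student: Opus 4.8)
The plan is to pull everything back along $g$ and reduce the order of the canonical section to an Euler characteristic of $\Tor$-sheaves, which I then match with Fulton's refined product via Riemann--Roch. Since $\cal{O}_{\overline U}(D)$ and $s_D$ are built on the Hilbert schemes as a determinant of cohomology and are shown to descend, their formation commutes with the morphism $g:T\to\overline U$. The pulled-back cycles $Z=\sum_i n_i[Z_i]$ and $W=\sum_j m_j[W_j]$ are relative cycles over $T$, so each prime component $Z_i, W_j\into P\times T$ is integral and dominates $T$, hence is flat over the discrete valuation ring $R$. Because the canonical section is multiplicative under the determinant and $\ord$ is additive, while $Z\cdot W=\sum_{i,j}n_im_j\,(Z_i\cdot W_j)$ is bilinear, I may assume $Z=[\cal Z]$ and $W=[\cal W]$ for integral $\cal Z,\cal W$ flat over $R$ with disjoint generic fibers (as $g(\eta)\in U$). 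In these terms $g^*(s_D)$ is the canonical section of the line $\det R\Gamma(P\times T,\;\cal O_{\cal Z}\dotimes_{\cal O_{P\times T}}\cal O_{\cal W})$ on $T$.

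Next I would compute its order. Put $K^\bullet=R\Gamma(P\times T,\cal O_{\cal Z}\dotimes\cal O_{\cal W})$, a perfect complex of $R$-modules. Disjointness of the generic fibers makes $\cal O_{\cal Z}\dotimes\cal O_{\cal W}$ acyclic over $\eta$, so $K^\bullet$ becomes acyclic after inverting a uniformizer; this is precisely the canonical generic trivialization of $\det K^\bullet$ that cuts out $s_D$ away from the incidence locus. By the order formula for determinants of perfect complexes (Knudsen--Mumford), with the sign conventions fixing $s_D$,
$$ \ord g^*(s_D)=\sum_i(-1)^i\operatorname{length}_R\bb{H}^i(K^\bullet). $$
The modules $\bb{H}^i(K^\bullet)$ are torsion, hence of finite length over $R$, and the hypercohomology spectral sequence rewrites the right-hand side as $\sum_j(-1)^j\chi\big(P\times T,\;\Tor_j^{\cal O_{P\times T}}(\cal O_{\cal Z},\cal O_{\cal W})\big)$, the Euler characteristic (alternating sum of $R$-lengths of cohomology) of the $\Tor$-sheaves, which are coherent and supported on $\cal Z\cap\cal W\subset P\times\{s\}$.

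Finally I would identify this Euler characteristic with $\deg(Z\cdot W)$. Because $P$ is smooth, the relative diagonal $\delta:P\times T\into (P\times T)\times_T(P\times T)=P\times P\times T$ is a regular embedding of codimension $n$, and $Z\cdot W=\delta^!([\cal Z]\times_T[\cal W])$ is exactly the class of \cite[20.2]{Ful}. By flatness over $R$ the external product is underived, so $\cal O_{\cal Z}\dotimes\cal O_{\cal W}=L\delta^*(\cal O_{\cal Z}\boxtimes_T\cal O_{\cal W})$. The Riemann--Roch compatibility of the transformation $\tau$ with the Gysin map of a regular embedding \cite[Ch.~18]{Ful} gives $\tau(L\delta^*y)=\operatorname{td}(N_\delta)^{-1}\cap\delta^!\tau(y)$ for $y=\cal O_{\cal Z}\boxtimes_T\cal O_{\cal W}$; since $\delta^!\tau(y)$ is already $0$-dimensional (the product has dimension $(a+1)+(b+1)-1-n=0$), the Todd factor and all lower-dimensional terms drop out, and taking degrees yields $\sum_j(-1)^j\chi(\Tor_j)=\deg(Z\cdot W)$. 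On the locus $U'$, where $\cal Z$ and $\cal W$ meet properly, this collapses to Serre's Tor formula \cite[7.1]{Ful}, recovering the effectivity in the second bullet.

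I expect the crux to be this last identification in the improper case: when the special fibers of $\cal Z$ and $\cal W$ meet in positive dimension, $\cal Z\cap\cal W$ is not $0$-dimensional, Serre's pointwise formula no longer produces a cycle, and one must genuinely pass through the refined Gysin class $\delta^!$ and its Riemann--Roch compatibility rather than count local multiplicities. A secondary point requiring care is the precise descent statement identifying $g^*(s_D)$ with the canonical section of $\det K^\bullet$ on the nose, signs included, which is what licenses the very first reduction above.
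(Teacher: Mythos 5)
Your proposal engages seriously only with the third bullet; the first bullet --- the existence of the Cartier divisor $D$ on $\overline{U}$ --- is assumed outright in your opening sentence (``are shown to descend''). But that descent is precisely the heart of the theorem and of the paper: the determinant bundle $\cal{L}$ is naturally defined only on the product of Hilbert schemes, and the paper spends Sections $\ref{secHC}$--$\ref{secDD}$ constructing a descent datum $\phi$ along a proper hypercovering of $\overline{U}$ (via the seminormality characterization $\ref{char sn rings}$, the reduction to fields and DVRs $\ref{pic dvrs}$, the Serre Tor-formula computation $\ref{varphi proper HC}$, the moving lemmas $\ref{moving lemma}$ and $\ref{moving lemma 2}$, and the Grothendieck--Riemann--Roch argument $\ref{GRR move}$) and proving its effectiveness via $\ref{sn pic inj}$. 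Even granting the descended bundle $\cal{M}$, the assertion that $D$ is supported on $\overline{U}-U$ requires the canonical trivialization of $\cal{L}$ over the disjoint locus to descend to $U$, which the paper deduces from the injectivity of $\pic(U) \to \pic({(\pi^{-1}(U))}_\bullet)$. So you have misplaced the crux: the Riemann--Roch identification you flag as the hard point is, in the paper, a short closing computation (cited to \cite[20.4]{Ful}), while the genuine difficulty is the descent you take for granted.

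Within the third bullet, your reduction to integral $\cal{Z},\cal{W}$ also has a gap. You decompose $Z=\sum_i n_i[Z_i]$ and invoke ``multiplicativity of the canonical section,'' but $\cal{M}$ and $s_D$ live on one fixed $\overline{U} \subset \scr{C}_a(P) \times \scr{C}_b(P)$ of fixed degrees, and the prime components define maps to Chow varieties of smaller degree; relating $g^*(s_D)$ to a tensor product of component-wise determinants would require a compatibility of the descended bundles with the cycle-addition maps between Chow varieties, which is neither formal nor established anywhere. Moreover, when some $n_i>1$ the cycle $Z$ need not be the fundamental cycle of any $R$-flat subscheme, so $g$ need not lift to the Hilbert scheme over $R$ itself. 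The paper instead lifts $g$ after a finite extension $R' \supset R$ of discrete valuation rings (using surjectivity and properness of the Hilbert--Chow morphism), notes that both $\ord g^*(s_D)$ and $\deg(Z \cdot W)$ scale by the same factor under this base change, and only then runs the computation you outline: the Knudsen--Mumford order formula, the Euler characteristic of the Tor sheaves, and the identification with $\deg(Z\cdot W)$ --- there your sketch does match the paper, with your explicit Gysin-compatibility argument replacing the citation of \cite[20.4]{Ful}. Finally, your one-line treatment of the second bullet misses the paper's case analysis on the normalization ${U'}^\nu$: at a depth-one point the incidence may have positive-dimensional fibers over its image, in which case the image has codimension at least two and the coefficient is $0$ rather than a sum of Serre multiplicities.
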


\begin{remark} Our methods will suggest there is a line bundle on the whole of the product $\scr{C}_a(P) \times \scr{C}_b(P)$, but it does not seem reasonable to expect a Cartier divisor beyond the locus $\overline{U}$.  On the locus $U'$, the operation $\Delta^!$ is defined on the cycle level, and all of the coefficients appearing in ${{pr}_{23}}_\ast {\Delta}^! (U_a \boxtimes U_b)$ are positive.  On the locus $\overline{U} - U'$, negative coefficients may appear.
\end{remark}

\sss \textbf{Techniques.} Our approach to Mazur's question is to define the incidence line bundle $\cal{L}$ on a product of Hilbert schemes mapping to the corresponding Chow varieties, and then show $\cal{L}$ is the pullback of a line bundle $\cal{M}$ on the Chow varieties.  Our $\cal{L}$ will be equipped with a canonical nonvanishing rational section on the locus of disjoint subschemes, and we will show this section is induced by a trivialization of $\cal{M}$ on $U$.  Briefly, $\cal{L}$ is the determinant of a perfect complex formed from the universal flat families.  Then we form a proper hypercovering of $\overline{U}$ along the Hilbert-Chow morphism, and a descent datum for $\cal{L}$ on this hypercovering.  This amounts to an identification $\phi$ between two pullbacks of $\cal{L}$, satisfying a cocycle condition.

\sss At first we define the descent datum $\phi$ over a normal base provided the incidence has the expected dimension ($\ref{varphi proper HC}$); this boils down to the Serre Tor-formula for intersection multiplicities and basic properties of the determinant functor (additivity on short exact sequences).  To extend the descent datum over families with more complicated incidence structure, we establish some moving lemmas to produce local trivializations ($\ref{moving lemma}, \ref{moving lemma 2}$), then apply Grothendieck-Riemann-Roch to show these local sections glue ($\ref{GRR move}, \ref{proper compat moving}$).  A useful tool is the following result, which characterizes functions on a seminormal scheme ($\ref{char sn rings}$; see also Definition $\ref{defn ptwise}$): a Noetherian ring $A$ is seminormal if and only if every pointwise function on $\spec A$ which varies algebraically along (complete) DVRs is induced by an element of $A$. 

\sss As for the effectiveness of $({\cal{L}}, \phi)$, i.e., that $\cal{L}$ is induced by a line bundle on the Chow varieties, an outgrowth of $\ref{char sn rings}$ is a criterion for effective descent ($\ref{sn pic inj}$) applicable to our Hilbert-Chow hypercovering: the bundle $\cal{L}$ descends to ${\cal{M}} \in \text{Pic}(\overline{U})$ if it can be trivialized locally on $\overline{U}$, compatibly with the descent datum $\phi$.  The compatible local trivializations are built into the definition of the descent datum. 

\sss \textbf{Motivation.} In the classical construction, the Chow variety $\mathscr{C}_{d,d'}(\mathbb{P}^n)$ is realized as a closed subvariety of the scheme of Cartier divisors of the Grassmannian $\mathscr{G}$ of $(n-d-1)$-planes: to a $d$-dimensional cycle $Z$ on $\mathbb{P}^n$ we associate the (codimension one) set of $(n-d-1)$-planes in $\mathbb{P}^n$ which intersect $Z$.  Thus the natural ample line bundle on $\mathscr{G} \times \text{CDiv}(\mathscr{G})$ simultaneously shows the projectivity of the Chow variety, and endows the incidence locus (in $\mathscr{C}_{d,d'}(\mathbb{P}^n) \times \mathscr{G}$, a special case of the $\mathscr{I}$ considered above) with the structure of an effective Cartier divisor.  This generalizes to the case $\mathscr{C}_d(\mathbb{P}^n) \times \mathscr{C}_{n-d-1}(\mathbb{P}^n)$ using the ruled join; see \cite{PROJ}.

\sss This direct geometric construction does not extend to general smooth projective $P$.  However, the Hilbert scheme (the moduli space for closed subschemes of $P$) and the Hilbert-Chow morphism $\scr{H} \to \scr{C}$ suggest another approach.  The pullback of the line bundle associated to the incidence divisor via $\mathscr{H}(\bb{P}^n) \times \mathscr{G} \to \mathscr{C}(\bb{P}^n) \times \mathscr{G}$ is the determinant of a perfect complex formed from the universal flat families (see the end of Section $\ref{secHC}$), and the determinant construction can be defined for any smooth projective $P$.  Thus one is naturally led to wonder, for a general pair of Hilbert schemes parameterizing subschemes of dimension $a,b$ as above, whether the determinant line bundle descends to the corresponding product of Chow varieties.  The direct geometric construction for $P=\bb{P}^n$ and the determinant formula are in fact compatible; see the end of Section $\ref{secHC}$.

\sss Further motivation comes from the case of zero-cycles and divisors, where the Hilbert-Chow morphism admits a reasonably explicit description.  The equality of the families of zero-cycles associated to two families of zero-dimensional subschemes has a natural expression in terms of determinants; and similarly two families of codimension one subschemes determine the same family of cycles if the determinants of their structure sheaves agree.  For a detailed study of the determinant bundle in the case of zero-cycles and divisors, in particular the descent to the Chow varieties, see \cite{ZD}.

\medskip \textbf{Contents.} In Section $\ref{Kprelim}$ we recall background material on determinant functors and $K$-theory.  In Section $\ref{secHC}$ we discuss the relevant properties of the Chow variety and the Hilbert-Chow morphism, and define the incidence line bundle and the Hilbert-Chow hypercovering along which the incidence bundle descends.  In Section $\ref{secSN}$ we explain the role of seminormality both in defining the descent datum and demonstrating its effectiveness.  In Section $\ref{secDD}$ we construct the descent datum and show it is effective.

\medskip \textbf{Other work.} In an attempt to answer Mazur's question, Wang \cite{W} uses the Archimedean height pairing on algebraic cycles to show that $(n-1)! \mathscr{I}$ is Cartier (over $\mathbb{C}$).  (See the references in \cite{W} for history of the height pairing.)  Given disjoint cycles $A,B$ on $P$ as above, one has the pairing $\langle A, B \rangle := \int_A [G_B]$ defined by integrating a normalized Green's current for $B$ over $A$.  Wang views $\langle A, B \rangle$ as a function on the open set $U$ in $\mathscr{C}_a(P) \times \mathscr{C}_b(P)$ consisting of disjoint cycles, and by studying the behavior of the function as the cycles collide, obtains \cite[Thm. 1.1.2]{W} a metrized line bundle $L$ on $\overline{U}$ and a rational section $s$ that is regular and nowhere zero on $U$, such that $\log {|| s(A,B) ||}^2 = (\dim(P) -1 )! \langle A, B \rangle$.  Using different methods, namely relative fundamental classes in Deligne cohomology, Barlet and Kaddar \cite{BK} associate to a family of cycles over a base $S$ a Cartier (incidence) divisor on $S$ (again over $\mathbb{C}$).  It would be interesting to ``go back" from the Chow varieties to the height pairing.

\medskip \textbf{Conventions.}  We use the definition of the Chow variety from \cite{Kol}.  In characteristic 0, there is a functor of effective algebraic cycles (of dimension $d$ and degree $d'$) defined on the category of seminormal $k$-schemes; and this functor is represented by a seminormal, projective $k$-scheme $\mathscr{C}_{d,d^\prime}(P)$ \cite[I.3.21]{Kol}.  In characteristic $p > 0$, there are several plausible notions of a family of effective algebraic cycles, stemming from the ambiguity of the field of definition of a cycle \cite[I.4.11]{Kol}.  In this case we work with the seminormal, projective $k$-scheme $\mathscr{C}_{d,d^\prime}(P)$ constructed in \cite[I.4.13]{Kol}.  This coarsely represents at least two reasonable functors of effective algebraic cycles.  All schemes considered in this paper are locally Noetherian.  A variety over a field $k$ is an integral separated scheme of finite type over $k$. 

\medskip \textbf{Acknowledgments.} This paper is derived from and improves upon the author's PhD thesis. The author thanks his thesis advisor Aise Johan de Jong.  This work was completed while the author was a wissenschaftlicher Mitarbeiter at the Universit\"at Duisburg-Essen.

\section{Determinant functors and K-groups} \label{Kprelim}

In this section, we briefly recall the notion of a determinant functor, which assigns to a perfect complex an invertible sheaf; and also associated Cartier divisors.  The main reference is \cite{KM}; see also \cite{Fog} and \cite{Knu}.  Then we quickly review some background material on $K$-groups. 

\begin{notation} Let $X$ be a scheme.  Let $D(X)$ denote the derived category of the abelian category $\text{Mod}(X)$ of ${\cal{O}}_X$-modules.  We denote by $D^+(X) \subset D(X)$ the full subcategory of bounded below complexes of ${\cal{O}}_X$-modules, and similarly we have $D^-(X)$ and $D^b(X) = D^+(X) \cap D^-(X)$.  We denote by $D_{(q)coh}(X)$ the full triangulated subcategory of $D(X)$ consisting of pseudo-(quasi)coherent complexes, and by $D^*_{(q)coh}(X)$ the corresponding bounded category for $* = +, -, b$.  We denote by $\parf(X) \subset D^b(X)$ the full triangulated subcategory consisting of perfect complexes. Finally, let $\text{Parf-is}(X)$ denote the category whose objects are perfect complexes on $X$, with morphisms isomorphisms in $D(X)$, and let $\pic(X)$ denote the (Picard) category whose objects are invertible sheaves on $X$, and whose morphisms are isomorphisms. \end{notation}

\sss \textbf{Determinants.} \label{det functor}The main result of \cite[Thm.~2]{KM}\comment{page 42} is that there exists (up to canonical isomorphism) a unique \textit{determinant functor} $\det_X : \text{Parf-is}(X) \to \pic(X)$ extending the usual determinant (top exterior power) of a locally free sheaf.  Indeed the idea is to (locally) replace a perfect complex by a bounded complex of locally free sheaves, and take the tensor product (with signs) of the (usual) determinants of the locally free terms; and show this patches to give a global invertible sheaf.  For every true triangle of complexes $0 \to {\cal{F}}_1 \xrightarrow{\alpha} {\cal{F}}_2 \xrightarrow{\beta} {\cal{F}}_3 \to 0$ in $\text{Parf-is}(X)$, we require an isomorphism $i_X(\alpha, \beta) : \det ({\cal{F}}_1) \otimes \det({\cal{F}}_3) \xrightarrow{\sim} \det({\cal{F}}_2)$, and the isomorphisms $i$ (extending the obvious $i$ for short exact sequences of locally free sheaves) are required to be compatible with isomorphisms of triangles, and more generally triangles of triangles.  Associated to morphisms of schemes we have base change isomorphisms (`interchanging' the determinant with pullback), and these are required to be compatible with composition of morphisms of schemes.

\begin{remark} When $X$ is reduced, $i$ extends to the class of distinguished triangles, is functorial over isomorphisms (in $D(X)$) of distinguished triangles, and is compatible with distinguished triangles of distinguished triangles \cite[Prop. 7]{KM}.
\end{remark}

\textbf{Associated Cartier divisors.} If $\cal{F} \in \parf(X)$ is acyclic at every $x \in X$ of depth $0$, then \cite[Ch.II]{KM} constructs a Cartier divisor $\Div(\cal{F})$ on $X$ and a canonical isomorphism $\cal{O}_X(\Div(\cal{F})) \cong \det_X (\cal{F})$ extending the trivialization $\cal{O}_{X,x} \cong \det_x \cal{F}$ at $x \in X$ of depth $0$.  The formation of this divisor and isomorphism is additive on short exact sequences \cite[Thm.3(ii)]{KM}, and is compatible with base change $f: X' \to X$ such that $\textbf{L}f^* (\cal{F})$ is acyclic at every $x' \in X'$ of depth $0$ \cite[Thm.3(v)]{KM}.  Furthermore, in case $X$ is normal and $x \in X$ is a point of depth $1$, the coefficient of $\overline{ \{ x\} }$ in the Weil divisor associated to $\Div(\cal{F})$ is the alternating sum of the lengths (at $x$) of the cohomology sheaves, i.e., $\sum_i {(-1)}^i \ell_x (\cal{H}^i(\cal{F}))$.

\sss We mention two further properties implicit in \cite[Thm.3]{KM}.  If the complex $\cal{F}$ is acyclic, then $\Div(\cal{F}) = 0$ and the canonical isomorphism $\cal{O}_X(\Div(\cal{F})) = \cal{O}_X \cong \det_X (\cal{F})$ is the trivialization of the determinant of an acyclic complex \cite[Lemma 2]{KM}.  Finally, the construction is determined by the quasi-isomorphism class of the perfect complex $\cal{F}$: since all filtration levels and subquotients appearing in the canonical filtration (``good truncation") of $\cal{F}$ are generically acyclic so long as $\cal{F}$ is, the additivity implies $\Div(\cal{F}) = \sum_i {(-1)}^i \Div (\cal{H}^i(\cal{F}))$.

\medskip \textbf{$K$-groups.} Let $X$ be a variety.  Then $K_0(X)$ is the Grothendieck group of $X$, generated by coherent sheaves on $X$ with relations for short exact sequences of sheaves; $K^0(X)$ is the Grothendieck group of vector bundles.  When $X$ is regular, $K_0(X) \cong K^0(X)$.  We have also the Chern character $\text{ch}: K_0(X) \to A_\ast(X)$, where $A_\ast(X)$ is the Chow group of cycles on $X$, graded by dimension.  We note $\cal{F} \in \parf(X)$ determines a class in $K_0(X)$ since for any abelian category $\scr{A}$ (e.g., $\coh(X)$), $K_0(\scr{A}) \cong K_0(D^b(\scr{A}))$; the latter group is generated by objects of the triangulated category $D^b(\scr{A})$ with relations for distinguished triangles.

\sss The group $K_0(X)$ has a topological filtration: the subgroup $F_k(K_0(X))$ is generated by those $\cal{F} \in \coh(X)$ such that $\dim(\supp(\cal{F})) \leq k$.  For a proper morphism of schemes $f: X \to T$ we obtain a homomorphism $f_\ast :K_0(X) \to K_0(T)$ sending (the class of) a coherent sheaf to the alternating sum of (the classes of) its higher direct image sheaves.  This preserves the topological filtration.  If $T$ is a point and $\cal{F} \in \coh(X)$, then $\chi(\cal{F}) = \text{ch}(f_\ast(\cal{F}))$.

\section{The Hilbert-Chow morphism and the incidence divisor} \label{secHC}

In this section we define the Chow variety, the Hilbert-Chow morphism, and construct our proper hypercovering.  Then we define the incidence line bundle on the product of Hilbert schemes.

\sss We recall an application of the characterization of seminormal schemes \cite[Prop 5.1]{RossSN}, where it is shown that properties (1)-(5) below characterize the Chow variety.  For properties (6) and (7) we refer to \cite{Kol}.

\begin{defnthm}[Existence of the Chow variety] \label{Chow exists}
Let $P$ be a smooth projective variety over a field $k$.  The Chow variety ${\mathscr{C}}_{d, d^\prime}$ of $P$ is a $k$-scheme with the following properties:
\begin{enumerate}[(1)]
\item It is projective over $k$.
\item It is seminormal.
\item For every point $w \in {\mathscr{C}}_{d, d^\prime}$ there exist purely inseparable field extensions $\kappa(w) \subset L_i$ and cycles $Z_i$ on $P_{L_i}$ such that:
\begin{enumerate}[(a)]
\item $Z_i$ and $Z_j$ are essentially equivalent \cite[I.3.8]{Kol}: they agree as cycles over the perfection $\kappa(w)^{perf}$ of $\kappa(w)$;
\item the intersection of the fields $L_i$ is $\kappa(w)$, which is the Chow field (field of definition of the Chow form in any projective embedding of $P$\comment{; see $\ref{construct chow}$}) of any of the $Z_i$ \cite[I.3.24.1]{Kol}; and
\item for any cycle $Z$ on $P_M$ defined over a subfield $k \subset M \subset \kappa(w)^{perf}$ which agrees with the $Z_i$ over $\kappa(w)^{perf}$ (equivalently, agrees with one $Z_i$), we have $\kappa(w) \subset M$ (the Chow field is the intersection of all fields of definition of the cycle).
\end{enumerate}
\item Points $w$ of ${\mathscr{C}}_{d, d^\prime}$ are in bijective correspondence with systems $(k \subset \kappa(w), \{ \kappa(w) \subset L_i, Z_i \}_{i \in I} )$ up to an obvious equivalence relation.
\item For any DVR $R \supset k$ and any cycle $Z$ on $P_R$ of relative dimension $d$ and degree $d^\prime$ in the generic fiber, we obtain a morphism $g: \spec R \to {\mathscr{C}}_{d, d^\prime}$ such that the generic fiber $Z_{\eta}$ and the special fiber $Z_s$ agree with the systems of cycles of the previous property at $g(\eta)$ and $g(s)$.
\item For any numerical polynomial $q$ of degree $d$ and with leading coefficient $d^\prime / (d!)$, we obtain a morphism (the Hilbert-Chow morphism)
$$FC : {({\mathscr{H}}^q)}^{sn}_{red} \to {\mathscr{C}}_{d, d^\prime}$$
\noi by taking the fundamental cycle of the components of maximal relative dimension $(=d)$\cite[I.6.3.1]{Kol}.  A finite number of ${({\mathscr{H}}^q)}^{sn}_{red}$'s surject onto $ {\mathscr{C}}_{d, d^\prime}$.
\item Let $\eta \in {\mathscr{C}}_{d, d^\prime}$ be a generic point.  Then either $\text{dim} \overline{ \{ \eta \} } = 0$ or there exists a cycle $Z_\eta$ on $P_\eta$ defined over $\kappa(\eta)$.  In particular, if $k$ is perfect then there exists a $Z_\eta$ for every generic point $\eta$ of ${\mathscr{C}}_{d, d^\prime}$ \cite[I.4.14]{Kol}.
\end{enumerate}
\end{defnthm}

\begin{construction} \label{proper hyper} Let $P$ be a smooth projective variety, and $r \in \bb{Z}_{\geq 0}$.  Let $\scr{H}_r'$ denote the Hilbert scheme of $r$-dimensional subschemes of $P$.  Let $\scr{H}_r$ denote the seminormalization of the (closed) subscheme of $\scr{H}_r'$ consisting of subschemes $Z$ such that $Z$ has pure $r$-dimensional support (this is different from the notion of a pure sheaf: $Z$ may have embedded components of smaller dimension so long as they are set-theoretically contained in the top-dimensional components).  We have the product of the Hilbert-Chow morphisms ($\ref{Chow exists}$) $\pi : Y_0 = \scr{H}_a \times \scr{H}_b \to \scr{C}_a \times \scr{C}_b =: C$.  Because seminormalization is a functor, we may form a proper hypercovering $\pi_\bullet: Y_\bullet \to C$ augmented towards $C$ whose $i$-th term $Y_i$ is the seminormalization of $Y_0 \times_C \ldots \times_C Y_0$ ($i+1$ factors), with the (seminormalizations of the) canonical morphisms.  
\end{construction}

\begin{remarks}
\subtheorem We explain property (7) in more detail.  For any positive-dimensional component of $\scr{C}_{d,d'}$, its generic point corresponds to a cycle all of whose coefficients are $1$, i.e., a subscheme \cite[1.4.14]{Kol}.  Hence we can find a (component of some) ${(\scr{H}^q)}^{sn}_{red}$ admitting a birational morphism to that component.
\subtheorem Over a field of characteristic zero, the seminormality of the Chow variety and \cite[Prop. 4.1]{RossSN} imply $\cal{O}_{\scr{C}} = {\pi_\bullet}_\ast (\cal{O}_{X_\bullet})$ for a proper hypercovering $X_\bullet$ augmented towards $\scr{C}$.  In characteristic $p > 0$, we have the characterization of the residue fields on the Chow variety as the intersection of all fields of definition \cite[I.4.5]{Kol}.  So by \cite[Prop. 4.1 Corrigendum]{RossSN}, we have $\cal{O}_{\scr{C}} = {\pi_\bullet}_\ast (\cal{O}_{X_\bullet})$ for a proper hypercovering such that $X_0 = \scr{H}^{sn}_{red}$.

In more detail and in the language of \cite[4.1]{RossSN}, we explain how to construct (locally) a pointwise function on $\scr{C}$ from a pointwise function on $\scr{H}^{sn}_{red}$ which belongs to ${\pi_\bullet}_\ast (\cal{O}_{X_\bullet})$.  So suppose $z \in \scr{C}(P_k)$ corresponds via a morphism $\spec \kappa(z) \to \scr{C}(P_k)$ to the cycle $Z$ on $P_{\kappa(z)}$.  Consider an algebraically closed field $\overline{K}$ containing $\kappa(z)$ and the cycle associated to the base change $Z_{\overline{K}}$.  Then by \cite[I.4.5]{Kol}, the residue field $\kappa(z)$ is characterized as the intersection in $\overline{K}$ of all fields of definition of $Z$, i.e., the intersection of all $E_i$ such that $k \subset E_i \subset \overline{K}$ and there exists a subscheme $Y_i \subset P_{E_i}$ whose associated cycle agrees with $Z_{\overline{K}}$ upon base change.  Consider fields $E_0, E_1$ satisfying these conditions.  Then we have morphisms $\spec E_i \to \scr{H}$ with the property that the compositions $\spec \overline{K} \to \spec E_i \to \scr{H} \to \scr{C}$ are the same.  Thus we have a commutative diagram:

$$\xym{
\spec \overline{K} \ar[r] \ar@<-.15cm>[d] \ar@<.15cm>[d]  & {(\scr{H} \times_\scr{C} \scr{H})}^{sn}_{red} \ar@<-.15cm>[d]_-{p_0} \ar@<.15cm>[d]^-{p_1} \\
\spec E_0, \spec E_1 \ar[r] & \scr{H} \\ }$$

(The morphism from $\spec \overline{K}$ factors through the seminormalization.)  Considering $a \in {\pi_\bullet}_\ast (\cal{O}_{X_\bullet})$ as a pointwise function, we obtain elements $a_i \in E_i$.  The preceding diagram shows $a_0 = a_1$ in $\overline{K}$, therefore $a_0 \in E_0 \cap E_1$.  By the same argument we find $a_0 \in E_0 \cap E_i$ for all $i$, therefore $a_0 \in \kappa(z)$.  Thus we made an element in the residue field $\kappa(z)$.  It varies algebraically along DVRs by \cite[4.1]{RossSN}.

\subtheorem If $X = Y \cup Z$ is a reducible scheme with irreducible components $Y,Z$, then the field of definition of $Y$ is contained in the field of definition of $X$.  Also, a scheme and its seminormalization have the same residue fields.  Hence to cut out the residue fields on $\scr{C}$, it is enough to consider the subscheme $\scr{H}_r \into \scr{H}_r'$ defined in $\ref{proper hyper}$.  So we have $\cal{O}_{\scr{C}} = {\pi_\bullet}_\ast (\cal{O}_{X_\bullet})$ for a proper hypercovering such that $X_0 = \scr{H}_r$.  
\end{remarks}

We record the (presumably known) fact that the Hilbert-Chow morphism is compatible with products.

\begin{lemma} \label{HC products} If $P,P'$ are smooth projective varieties over a field $k$, then the following diagram commutes.  (We suppose $p$ has leading coefficient $d' / (d!)$ and $\deg(p)=d$; and $q$ has leading coefficient $e' / (e!)$ and $\deg(q)=e$.)
$$\xym{ {(\scr{H}^p(P))}^{sn}_{red} \times {(\scr{H}^{q}(P'))}^{sn}_{red} \ar[r]  \ar[d]^-{FC \times FC} & {(\scr{H}^{pq}(P \times P'))}^{sn}_{red} \ar[d]^-{FC} \\
\scr{C}_{d,d'}(P) \times \scr{C}_{e,e'}(P') \ar[r] & \scr{C}_{d+e,d'e'}(P \times P')  \\} $$
\end{lemma}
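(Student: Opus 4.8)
The plan is to identify the two horizontal arrows explicitly and then verify commutativity by reducing to geometric points, where the comparison becomes the elementary statement that the fundamental cycle of a product is the product of the fundamental cycles. Write $\scr{H}_a = {(\scr{H}^p(P))}^{sn}_{red}$, $\scr{H}_b = {(\scr{H}^q(P'))}^{sn}_{red}$, and $X = \scr{H}_a \times \scr{H}_b$. First I would pin down the top arrow: given the universal families $\cal{Z} \subset P \times \scr{H}_a$ and $\cal{Z}' \subset P' \times \scr{H}_b$, pull both back to $X$ and form the fiber product $\cal{W} = \cal{Z} \times_X \cal{Z}' \subset (P \times P') \times X$. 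The projection $\cal{W} \to X$ is flat, being the composite $\cal{W} \to \cal{Z} \to X$ of the base change of $\cal{Z}' \to X$ along $\cal{Z} \to X$ (flat, as flatness is stable under base change) with $\cal{Z} \to X$. By K\"unneth the fiber over $x \in X$ is $\cal{Z}_x \times_{\kappa(x)} \cal{Z}'_x$, with Hilbert polynomial $p \cdot q = pq$; hence $\cal{W}$ is a flat family with constant Hilbert polynomial $pq$, defining the map to ${(\scr{H}^{pq}(P \times P'))}^{sn}_{red}$, with $(d+e)$-dimensional fibers landing in $\scr{C}_{d+e,d'e'}(P \times P')$ as asserted.

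Next, since the Chow variety is separated (indeed projective, \ref{Chow exists}(1)) and the schemes in play are reduced, I would check equality of the two composite morphisms $X \to \scr{C}_{d+e,d'e'}(P \times P')$ on geometric points: their equalizer is a closed subscheme, and a surjective closed immersion into a reduced scheme is an isomorphism, so it suffices that the two composites agree on every $\spec \Omega \to X$ with $\Omega$ algebraically closed. Such a point is a pair $(Z, Z')$ of pure-dimensional subschemes of $P_\Omega, P'_\Omega$. The path \emph{right then down} produces the fundamental cycle of $Z \times_\Omega Z'$, while the path \emph{down then right} applies $FC$ to each factor and then the product-of-cycles map, producing the product of the fundamental cycles of $Z$ and $Z'$. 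Thus the lemma reduces to the cycle identity $[Z \times_\Omega Z'] = [Z] \times [Z']$ on $(P \times P')_\Omega$.

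Finally I would verify this identity. Over the algebraically closed (hence perfect) field $\Omega$, the top-dimensional components of $Z \times_\Omega Z'$ are exactly the products $Z_i \times Z'_j$ of top-dimensional components of $Z$ and $Z'$, each irreducible, and the multiplicity of $Z_i \times Z'_j$ is the length of the Artinian local ring $\cal{O}_{Z \times Z', \eta_{ij}}$ at the generic point $\eta_{ij}$. Filtering $\cal{O}_{Z, \eta_i}$ by a composition series and tensoring over $\Omega$ (a flat operation) against the analogous filtration of $\cal{O}_{Z', \eta_j}$ exhibits $\cal{O}_{Z, \eta_i} \otimes_\Omega \cal{O}_{Z', \eta_j}$ with $m_i m'_j$ graded pieces, each isomorphic to $\kappa(\eta_i) \otimes_\Omega \kappa(\eta_j)$, where $m_i, m'_j$ are the multiplicities of $Z_i, Z'_j$. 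Localizing at $\eta_{ij}$ and using that $\kappa(\eta_i) \otimes_\Omega \kappa(\eta_j)$ is reduced (as $\Omega$ is perfect), this length equals exactly $m_i m'_j$, which is the coefficient appearing in the product cycle $[Z] \times [Z']$ of \cite{Ful}. Hence the two cycles coincide, the two geometric points of $\scr{C}_{d+e,d'e'}(P \times P')$ correspond to the same cycle, and the morphisms agree.

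The main obstacle is this last multiplicity bookkeeping: controlling the length at the generic point of a component of the product and seeing that it factors as $m_i m'_j$, for which the reducedness of $\kappa(\eta_i) \otimes_\Omega \kappa(\eta_j)$ over the perfect base is essential. A secondary point requiring care is the legitimacy of the reduction to geometric points, which uses that the source is reduced (so that the surjective closed equalizer is an isomorphism); in characteristic $p$ this should be arranged by base change to $\overline{k}$ and passage to reduced structures before comparing.
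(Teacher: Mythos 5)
Your proof is correct and follows essentially the same route as the paper's: identify the top arrow as the fiber product (scheme-theoretic intersection of the pulled-back universal families), then observe that a top-dimensional component of $Z \times Z'$ is a product $Z_i \times Z'_j$ of top-dimensional components and that lengths multiply, so the product cycle has coefficients $m_i m'_j$. The paper's proof is a terse version of this; your reduction to geometric points via the equalizer and your filtration argument computing the length at $\eta_{ij}$ simply make explicit what the paper compresses into the phrase ``length multiplies.''
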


\begin{proof} We describe the map in the top row: if $Z \into P \times T, Z' \into P' \times T$ constitute a $T$-point of ${(\scr{H}^p(P))}^{sn}_{red} \times {(\scr{H}^{q}(P'))}^{sn}_{red}$, then the scheme theoretic intersection ${pr}^*_{13}Z \cap {pr}^*_{23}Z'$ in $P \times P' \times T$ is a $T$-point of ${(\scr{H}^{pq}(P \times P'))}^{sn}_{red}$.  A top-dimensional component in the product scheme is induced by a pair of top-dimensional components; and length multiplies, so the coefficients in the product cycle are the products of the coefficients of the factors.\comment{(We omit further details.) }\end{proof}

The main goal of this paper is to construct a Cartier divisor supported on the incidence locus.   Now we define an invertible sheaf (the ``incidence bundle") on a product of Hilbert schemes, and show the incidence bundle is pulled back from a bundle on a product of Chow varieties in the case $P=\mathbb{P}^n$.  

\medskip \noi \begin{construction} \label{defn incidence} Let $P$ be a smooth projective variety over any base scheme $B$ (over which we take all fiber products), and let ${\mathscr{H}}^1, {\mathscr{H}}^2$ denote the Hilbert schemes corresponding to numerical polynomials $q_1, q_2$.  Over each ${\mathscr{H}}^i$ we have a universal flat family (a closed subscheme of $P \times {\mathscr{H}}^i$); denote by ${\mathscr{U}}_i$ its pullback to $P \times {\mathscr{H}}^1 \times  {\mathscr{H}}^2$.  Standard facts about the behavior of perfect complexes under certain operations (stability under tensor product; pullback; and pushfoward via a proper morphism of finite Tor-dimension; and the perfectness of a coherent sheaf on the source of a smooth morphism which is flat over the target) imply $\textbf{R} {{pr}_{23}}_\ast ({\cal{O}}_{{\mathscr{U}}_1} \dotimes {\cal{O}}_{{\mathscr{U}}_2})$ is a perfect complex on ${\mathscr{H}}^1 \times {\mathscr{H}}^2$.  For details on the necessary facts about perfect complexes, see section 2 of \cite{ZD}.  The incidence bundle ${\cal{L}}$ is defined to be its determinant:

$${\cal{L}} := \displaystyle \text{det}_{{\mathscr{H}}^1 \times {\mathscr{H}}^2} \textbf{R} {{pr}_{23}}_\ast ({\cal{O}}_{{\mathscr{U}}_1} \dotimes {\cal{O}}_{{\mathscr{U}}_2}) .$$
\end{construction}

\noi In fact we will be interested in this construction only on the locus $Y_0$ defined earlier in this section.  As motivation for pursuing the determinant formula (mentioned in the Introduction), we make contact with the classical construction of the Chow variety of $P=\mathbb{P}^n$.   As explained in the Introduction, the construction of the Chow variety endows the incidence locus $\scr{I} \hookrightarrow \scr{C}_d(\mathbb{P}^n) \times \scr{G}$ with the structure of a Cartier divisor.  Let ${FC}_{\bb{P}^n} : {\mathscr{H}}(\bb{P}^n) \to {\mathscr{C}}(\bb{P}^n)$ denote the Hilbert-Chow morphism (and its product with $\scr{G}$).  In the special case of Construction $\ref{defn incidence}$ with $P=\mathbb{P}^n$ and ${\mathscr{H}}^2 = {\mathscr{G}}$, it follows from \cite[Thms.~1.2, 1.4]{Eis} that there is a canonical isomorphism ${\cal{L}} \cong {{FC}_{\bb{P}^n}}^\ast {\cal{O}}({\mathscr{I}})$ of invertible sheaves on ${\mathscr{H}} \times {\mathscr{G}}$.  Here canonical means the following: over the locus $U_0$ of disjoint subschemes, the complex $\textbf{R} {{pr}_{23}}_\ast ({\cal{O}}_{{\mathscr{U}}_1} \dotimes {\cal{O}}_{{\mathscr{U}}_2})$ is acyclic, hence there is a canonical trivialization $\cal{L} |_{U_0} \cong \cal{O}_{U_0}$.  This rational section is the pullback via ${FC}_{\bb{P}^n}$ of the canonical trivialization of $\cal{O}(\scr{I})$ on the complement of $\scr{I}$.

\section{Seminormal schemes and descent criteria} \label{secSN}

In this section we explain the role of seminormality both in defining the descent datum and demonstrating its effectiveness.

\begin{definition}[\cite{GT}] A ring $A$ is a \emph{Mori ring} if it is reduced and its integral closure $A^\nu$ (in its total quotient ring $Q$) is finite over it; if $A$ is a Mori ring, $A^{sn}$ denotes its seminormalization, the largest subring $A \subset A^{sn} \subset A^\nu$ such that $\spec A^{sn} \to \spec A$ is bijective and all maps on residue fields are isomorphisms.  The seminormalization is described elementwise in \cite[1.1]{Trav}. We say $A$ is seminormal if $A = A^{sn}$ (so we only define seminormality for Mori rings).  A locally Noetherian scheme $X$ is Mori if and only if it has an affine cover by Noetherian Mori rings \cite[Def. 3.1]{GT}.
\end{definition}

\begin{definition} \label{defn ptwise} Let $A$ be a ring, and let $S = \{f_y \in \kappa(y) | y \in \spec A \}$ be a collection of elements, one in each residue field.  Then we say \emph{$S$ is a pointwise function on $\spec A$}.  We say the pointwise function $S$ \emph{varies algebraically along (complete) DVRs} if it has the following property: for every specialization $\mf{p}_1 \subset \mf{p}_2$ in $A$ and every (complete) discrete valuation ring $R$ covering that specialization via a ring homomorphism $g: A \to R$, there exists a (necessarily) unique $f_R \in R$ such that $\overline{g_{\mf{p_1}}}(f_{\mf{p_1}}) = f_R$ (in $K$) and $\overline{g_{{\mf{p_2}}}}(f_{\mf{p_2}}) = \overline{f_R}$ (in $k_0$).
\end{definition}

The main result of \cite[2.2, 2.6]{RossSN} is the following.

\begin{theorem}
\label{char sn rings}
Let $A$ be a seminormal (in particular, Mori) ring which is Noetherian.  Let $\{f_y \in \kappa(y) | y \in \spec A \}$ be a pointwise function on $\spec A$ which varies algebraically along (complete) DVRs.  Then there exists a unique $f \in A$ whose image in $\kappa(y)$ is $f_y$ for all $y \in \spec A$.
\end{theorem}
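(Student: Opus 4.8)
The plan is to reduce to the normalization $A^\nu$, build the sought-after element there, and then descend it to $A = A^{sn}$ using seminormality. Uniqueness is immediate and I would dispatch it first: if $f,f' \in A$ both induce $\{f_y\}$, then $f - f'$ maps to $0$ in every residue field, i.e.\ lies in every prime $\mf{p}$, hence in the nilradical, which is $0$ because $A$ is reduced. So $f = f'$, and only existence requires work.

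For existence, I would first record that since $A$ is Mori, $A^\nu = \prod_i A_i^\nu$ is a finite product of Noetherian normal domains $A_i^\nu = \overline{A/\mf{p}_i}$, indexed by the minimal primes $\mf{p}_i$ of $A$, with $A_i^\nu$ finite over $A$. I would transport the given data to $\spec A^\nu$ by defining a pullback pointwise function: for $y' \in \spec A^\nu$ lying over $y \in \spec A$, set $f_{y'}$ to be the image of $f_y$ under the (injective) field map $\kappa(y) \into \kappa(y')$. Composing a ring homomorphism $A^\nu \to R$ with $A \to A^\nu$ shows that every DVR covering a specialization of $A^\nu$ induces one covering a specialization of $A$; since the resulting $f_R$ is the same, the pullback function again varies algebraically along (complete) DVRs.

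Next I would construct an element $f^\nu \in A^\nu$ inducing this pullback function, working one normal domain $A_i^\nu$ at a time (fraction field $K_i$). The value $f_{\eta_i}$ at the generic point lies a priori in $K_i$. For each height-one prime $\mf{p}$ the localization $R = (A_i^\nu)_{\mf{p}}$ is a DVR covering the specialization $\mf{p}_i \subset \mf{p}$, and since the induced map on the generic residue field is the identity $K_i \xrightarrow{\sim} \mathrm{Frac}(R)$, the hypothesis forces the associated $f_R$ to equal $f_{\eta_i}$; hence $f_{\eta_i} \in R = (A_i^\nu)_{\mf{p}}$. Intersecting over all height-one primes and invoking $A_i^\nu = \bigcap_{\mathrm{ht}\,\mf{p}=1} (A_i^\nu)_{\mf{p}}$ gives $f_{\eta_i} \in A_i^\nu$; set $f^\nu := (f_{\eta_i})_i$. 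To confirm $f^\nu$ realizes $f_{y'}$ at \emph{every} point $y'$, not just in codimension one, I would choose a DVR $R$ inside $K_i$ dominating the local ring $(A_i^\nu)_{y'}$ (such $R$ exists for any Noetherian local domain of dimension $\geq 1$), and read off from the hypothesis that the reduction of $f^\nu$ at $y'$ equals $f_{y'}$, using injectivity of $\kappa(y') \into k_0$. If only complete DVRs are allowed, I would replace $R$ by $\widehat R$ and use $\widehat R \cap K_i = R$.

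Finally I would descend $f^\nu$ to $A$, and this is the step I expect to be the main obstacle, as it is where seminormality and the full strength of the algebraicity hypothesis (governing all points, not merely those of codimension one) are needed, and where one must control the possibly non-reduced fibers. Consider the finite $A$-subalgebra $A[f^\nu] \subseteq A^\nu$. For each $y \in \spec A$, in the Artinian $\kappa(y)$-algebra $A^\nu \otimes_A \kappa(y)$ the image $\overline{f^\nu}$ reduces, inside $(A^\nu \otimes_A \kappa(y))_{\mathrm{red}} = \prod_{y' \mid y} \kappa(y')$, to the diagonal image of $f_y$ — this is precisely the content of $f^\nu$ inducing the pullback function. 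Hence $\overline{f^\nu} - f_y$ is nilpotent, so $A[f^\nu] \otimes_A \kappa(y) = \kappa(y)[\overline{f^\nu}]$ is local Artinian with residue field $\kappa(y)$. Therefore $\spec A[f^\nu] \to \spec A$ is a bijection inducing isomorphisms on all residue fields, so by the maximality in the definition of the seminormalization and the hypothesis $A = A^{sn}$, we get $A[f^\nu] \subseteq A^{sn} = A$, i.e.\ $f := f^\nu \in A$. That $f$ induces $\{f_y\}$ follows since $\kappa(y) \into \kappa(y')$ is injective and $f$ and $f_y$ have the same image in $\kappa(y')$.
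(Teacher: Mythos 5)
Your proposal is correct, and it takes essentially the same route as the proof the paper relies on: the paper does not reprove this theorem (it cites \cite[2.2, 2.6]{RossSN}), but the strategy visible in its own proof of Lemma $\ref{dense specializations}$ --- transport the pointwise function to the normalization, realize it there as an honest element, then descend to $A$ using seminormality --- is exactly your normalize/construct/descend argument. Your implementations of the two halves (height-one localizations plus DVRs dominating $(A_i^\nu)_{y'}$, with $\widehat{R} \cap K_i = R$ handling the complete case; and descent by showing $A[f^\nu]$ is spectrum-bijective with isomorphic residue fields over $A$, hence lies in $A^{sn} = A$ by maximality) fill in the details correctly.
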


This simplifies greatly the problem of defining a descent datum for a line bundle on a seminormal scheme, as seen in the following corollary.

\begin{corollary} \label{pic dvrs} Let $X$ be a seminormal locally Noetherian (in particular, Mori) scheme, and let $L, M \in \pic(X)$.  Then an isomorphism $L \cong M$ is equivalent to an ``identification of fibers varying algebraically along DVRs," that is:
\begin{quote}
for any field or (complete) DVR $R$, any $\spec R \xrightarrow{f} X$, an identification $\beta_f: f^\ast L \cong f^\ast M$ compatible with restriction to the closed and generic points: if $s \xrightarrow{i} \spec R, \eta \xrightarrow{j} \spec R$ denote the inclusions, then $\beta_{fi} = i^\ast \beta_f$ and $\beta_{fj} = j^\ast \beta_f$.
\end{quote}
\end{corollary}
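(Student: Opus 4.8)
The plan is to reduce the claim to Theorem \ref{char sn rings} by observing that an isomorphism $L \cong M$ of invertible sheaves is the same as a nowhere-vanishing global section of the invertible sheaf $N := L^{-1} \otimes M$, i.e.\ a unit in $H^0(X, N)$ in the appropriate local sense. One direction is formal: given a genuine isomorphism $\gamma: L \cong M$, its pullbacks $\beta_f := f^\ast \gamma$ are automatically compatible with restriction to closed and generic points, since pullback is functorial and $i^\ast f^\ast \gamma = (fi)^\ast \gamma$. So the content is the converse: an ``identification of fibers varying algebraically along DVRs'' produces an honest isomorphism of sheaves. The key step will be to set up the problem locally, where $N$ is trivial, so that the data $\{\beta_f\}$ becomes a pointwise collection of scalars, and then to verify that this collection is a pointwise function varying algebraically along DVRs in the sense of Definition \ref{defn ptwise}, so that Theorem \ref{char sn rings} applies.

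In detail, I would first work on an open affine $\spec A \subset X$ small enough that $L|_{\spec A}$ and $M|_{\spec A}$ are both free, and fix trivializations; then an isomorphism $L|_{\spec A} \cong M|_{\spec A}$ is precisely a unit $u \in A^\times$, and the claim becomes that such a $u$ can be assembled from the fiberwise data. For each point $y \in \spec A$, apply the hypothesis to the field $R = \kappa(y)$ and the canonical map $f_y: \spec \kappa(y) \to X$: the identification $\beta_{f_y}$ of one-dimensional $\kappa(y)$-vector spaces is, after the chosen trivializations, multiplication by a scalar $u_y \in \kappa(y)^\times \subset \kappa(y)$. This defines a pointwise function $S = \{u_y\}_{y}$ on $\spec A$. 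The compatibility built into the hypothesis — that $\beta_{fi} = i^\ast \beta_f$ and $\beta_{fj} = j^\ast \beta_f$ for any $\spec R \to X$ — is exactly what is needed to check that $S$ varies algebraically along (complete) DVRs: given a specialization $\mathfrak{p}_1 \subset \mathfrak{p}_2$ covered by a DVR $R$ via $g: A \to R$, the identification $\beta_g : g^\ast L \cong g^\ast M$ over $\spec R$ is, in the trivialization, multiplication by a single element $f_R \in R^\times$, and the restriction compatibilities force $\overline{g_{\mathfrak{p}_1}}(u_{\mathfrak{p}_1}) = f_R$ in the fraction field and $\overline{g_{\mathfrak{p}_2}}(u_{\mathfrak{p}_2}) = \overline{f_R}$ in the residue field, matching Definition \ref{defn ptwise} precisely.

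Having verified the hypotheses of Theorem \ref{char sn rings} (using that $A$ is seminormal and Noetherian, which is inherited from $X$), I obtain a unique $u \in A$ inducing every $u_y$. Since each $u_y$ is a unit and $\spec A \to \spec A$ restricted to the locus where $u$ vanishes would contain some point $y$ with $u_y = 0$, the element $u$ is a unit in $A$; more carefully, its image is nonzero in every residue field, so $u$ lies in no maximal (indeed no prime) ideal, hence $u \in A^\times$. This unit defines the desired isomorphism $L|_{\spec A} \cong M|_{\spec A}$, and uniqueness in Theorem \ref{char sn rings} guarantees that the isomorphisms constructed on overlapping affines agree, so they glue to a global isomorphism $L \cong M$. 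Finally I would check that this global isomorphism induces back the given $\{\beta_f\}$, again by uniqueness, closing the equivalence. The main obstacle I anticipate is the careful bookkeeping in the DVR step: one must confirm that the single element $f_R$ extracted from $\beta_g$ is genuinely the one demanded by Definition \ref{defn ptwise}, which relies essentially on the compatibility of $\beta_f$ with restriction to both the closed and the generic point of $\spec R$ — this is precisely where the hypothesis is doing real work rather than being a formality.
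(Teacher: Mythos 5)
Your proposal is correct and follows essentially the same route as the paper's proof: fix trivializations of $L$ and $M$ on an affine cover, translate the fiberwise identifications $\{\beta_f\}$ into an invertible pointwise function varying algebraically along DVRs, apply Theorem \ref{char sn rings} on each affine, and use its uniqueness statement to glue over overlaps. The extra details you supply (the formal forward direction, and the observation that the resulting element is a unit because it is nonzero in every residue field) are implicit in the paper's terser argument.
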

\begin{proof}  
Fix an open cover $X = \cup_i \spec S_i$ with $S_i$ a seminormal (Noetherian and Mori) ring which trivializes both $L$ and $M$, and fix trivializations $\varphi_i : L_i := L \arrowvert_{\spec S_i} \cong {\cal{O}}_{\spec S_i}$, $\psi_i : M_i := M \arrowvert_{\spec S_i} \cong {\cal{O}}_{\spec S_i}$.  Then defining $L \cong M$ is equivalent to identifying $\Gamma(\spec S_i, L_i) \cong \Gamma (\spec S_i, M_i)$ as $S_i$-modules (for all $i$), compatibly with restrictions.  Then considering the diagram:
$$\xymatrix{
\Gamma(L_i) \ar[r]^-\cong \ar[d]^-{\varphi_i} & \Gamma(M_i) \ar[d]^-{\psi_i} \\
S_i \ar[r] & S_i \\ }$$
and its pullbacks to spectra of fields and DVRs, we see that relative to the fixed $\varphi_i, \psi_i$, a family $\beta_f$ as in the statement is equivalent to an invertible pointwise function on each $S_i$ varying algebraically along DVRs.  By Theorem $\ref{char sn rings}$ this is equivalent to a family of elements $f_i \in {S_i}^\times = \text{Isom}_{S_i}(S_i, S_i)$.  The $f_i$ thus obtained agree on overlaps by the uniqueness statement in Theorem $\ref{char sn rings}$.  Then using the above diagram again we see that relative to the fixed trivializations, the family $f_i$ is equivalent to a family of isomorphisms $\Gamma(L_i) \cong \Gamma(M_i)$ compatible with restrictions.
\end{proof}

We will need the following general fact later.

\begin{lemma} \label{dense specializations} Let $X$ be a seminormal $k$-scheme and $Y \into X$ a closed subscheme.  Suppose every $y \in Y$ admits a generization to a point in $X - Y$, i.e., that $Y$ does not contain any generic points of $X$.  Let $S$ be a pointwise function on $X$ which varies algebraically along DVRs covering specializations within $X - Y$, and along those from $X-Y$ to $Y$.  Then $S$ varies algebraically along DVRs.\end{lemma}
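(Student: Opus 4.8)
Write $S=\{f_y\in\kappa(y)\}$. The property is local and ring-theoretic, so I may assume $X=\spec A$ with $A$ seminormal Noetherian, and I must verify the interpolation clause of Definition~\ref{defn ptwise} for an arbitrary specialization $\mf p_1\subset\mf p_2$ and every (complete) DVR $R$ covering it via $g\colon A\to R$, with fraction field $K$ and residue field. Since $Y$ is closed, if the generic point of $\spec R$ lies over $Y$ then so does the closed point; hence the three possibilities are that both points lie in $X-Y$, that the generic point lies in $X-Y$ and the closed point in $Y$, or that both lie in $Y$. The first two are given, so it remains to treat $\mf p_1,\mf p_2\in Y$. Because $R\hookrightarrow K$, any admissible $f_R$ is forced to be the image $\phi$ of $f_{\mf p_1}$ under $\kappa(\mf p_1)\hookrightarrow K$; thus the task is to prove (a) $\phi\in R$, and (b) that the reduction of $\phi$ equals the image of $f_{\mf p_2}$ in the residue field of $R$.

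The first key observation is that (a) and (b) depend only on the restricted valuation ring $R^\circ:=R\cap\kappa(\mf p_1)$. Indeed $g$ factors through $A\to\kappa(\mf p_1)$, so $g(A)\subset R^\circ$, and for a genuine specialization $R^\circ$ is a nontrivial DVR of $\kappa(\mf p_1)$ with fraction field exactly $\kappa(\mf p_1)$, whose generic and closed points still lie over $\mf p_1$ and $\mf p_2$ (a trivial restricted valuation would force $\mf p_1=\mf p_2$). Since (a), (b) for $R$ follow from the same statements for $R^\circ$ via $R^\circ\hookrightarrow R$ and the induced inclusion of residue fields, I may replace $R$ by $R^\circ$ and so assume $K=\kappa(\mf p_1)$. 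This step removes what is otherwise the genuine obstacle: an arbitrary $R$ may have fraction field far larger than $\kappa(\mf p_1)$ (even of infinite transcendence degree), in which case the generization $\xi$ below simply cannot be ``seen'' by the trait; passing to $R^\circ$ puts everything over the correct field $\kappa(\mf p_1)$.

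Now invoke the hypothesis on $Y$: the point $\mf p_1$ admits a generization $\xi\in X-Y$, and then $\mf p_2\in\overline{\{\mf p_1\}}\subset\overline{\{\xi\}}$ as well. I build a rank-two composite valuation ring $W$ of $\kappa(\xi)$ realizing the chain $\xi\rightsquigarrow\mf p_1\rightsquigarrow\mf p_2$: choose a DVR $B$ of $\kappa(\xi)$ dominating $\mathcal O_{\overline{\{\xi\}},\,\mf p_1}$ (so $B$ is centered at $\mf p_1$ and its residue field $F$ contains $\kappa(\mf p_1)$), extend the valuation of $R$ to a discrete valuation ring $B'$ of $F$ (a residually transcendental extension, so $B'\cap\kappa(\mf p_1)=R$) centered at $\mf p_2$, and let $W$ be the composite, with primes $(0)\subset\mathfrak P\subset\mathfrak m_W$ satisfying $W_{\mathfrak P}=B$ and $W/\mathfrak P=B'$, so that $\spec W\to X$ sends its three points to $\xi,\mf p_1,\mf p_2$. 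Collapsing the lexicographic value group $\mathbb Z\times\mathbb Z$ of $W$ by $(c,d)\mapsto Nc+d$ yields honest DVRs $C_N$ of $\kappa(\xi)$, each centered at $\mf p_2$, with the residue field and reduction map of $W$, and with the elementwise property that $h\in W$ if and only if $h\in C_N$ for all sufficiently large $N$. Crucially, $B$ realizes the specialization $\xi\rightsquigarrow\mf p_1$ and each $C_N$ realizes $\xi\rightsquigarrow\mf p_2$, all from $X-Y$ into $Y$, so the hypothesis applies to every one of them.

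Finally I push $f_\xi$ through $W$. Applying the hypothesis to each $C_N$ gives $f_\xi\in C_N$ with reduction equal to the image of $f_{\mf p_2}$; by the elementwise description this forces $f_\xi\in W$. Applying it to $B$ gives that the reduction of $f_\xi$ in $F$ is the image of $f_{\mf p_1}$; but $f_\xi\in W$ means this reduction lies in $B'$, so the image of $f_{\mf p_1}$ lies in $B'\cap\kappa(\mf p_1)=R$, which is exactly (a). Reducing once more, the image of $f_\xi$ in the residue field of $W$ equals, computed through $B'$, the reduction of $f_{\mf p_1}$ in $R$, and computed through any $C_N$, the image of $f_{\mf p_2}$; since $W$ and $C_N$ share their residue field and reduction map these coincide, and descending along the inclusion of the residue field of $R$ into that of $B'$ gives (b). The remaining work is standard valuation theory --- existence of the dominating DVR $B$, the discrete residually transcendental extension $B'$, the structure of the composite $W$, and the elementwise identity $W=\bigcap_{N\gg 0}C_N$ --- which I expect to be routine; the one genuinely delicate point is the reduction to $R^\circ$, without which the generization $\xi$ is invisible to the trait.
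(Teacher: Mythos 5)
Your reduction to the case $\mf{p}_1,\mf{p}_2\in Y$ and the passage from $R$ to $R^\circ=R\cap\kappa(\mf{p}_1)$ are both correct, and routing through a generization $\xi\in X-Y$ via a rank-two composite valuation is a reasonable idea. But the step you defer as routine is where the proof breaks. The map $(c,d)\mapsto Nc+d$ is \emph{not} order-preserving on $\mathbb{Z}\times\mathbb{Z}$ with the lexicographic order (e.g.\ $(1,-2N)>0$ maps to $-N<0$), and composing a valuation with a non-order-preserving homomorphism does not yield a valuation. Concretely: take $\kappa(\xi)=k(x,y)$, let $B$ be the DVR of $\ord_x$ with residue field $F=k(y)$, let $B'$ be the DVR of $\ord_y$, and let $W$ be the composite. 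The elements $y$ and $xy^{-10N}-y$ both have $W$-value $(0,1)$, hence ``collapsed'' value $1\geq 0$, while their sum $xy^{-10N}$ has $W$-value $(1,-10N)$, hence collapsed value $-9N<0$. So your $C_N$ is not closed under addition: it is not a ring, let alone a DVR. Moreover, even after replacing this with a genuine approximation (say the monomial valuation with weights $v(x)=N$, $v(y)=1$, or a ``generic curve'' valuation), the claim that $C_N$ has ``the residue field and reduction map of $W$'' is false: the weight-$(N,1)$ monomial valuation has residue field $k(\overline{xy^{-N}})$, strictly larger than $\kappa_W=k$, and the element $c+xy^{-N}$ reduces to $c$ through $W$ but to $c+\overline{xy^{-N}}\neq c$ through $C_N$. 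Compatibility of reductions holds only asymptotically (for each \emph{fixed} element, for $N$ large depending on that element). Your final step identifies the reduction of $f_\xi$ through $W$ with the image of $f_{\mf{p}_2}$ precisely by appealing to the shared residue field and reduction map, so as written it does not go through. A repair may be possible --- fix $f_\xi$ and $f_{\mf{p}_2}$ first, use only asymptotic compatibility, verify that the corrected $C_N$ are centered at $\mf{p}_2$ on $A$, and prove the elementwise identity $W=\{h: h\in C_N \text{ for all } N\gg_h 0\}$ --- but that is a substantial valuation-theoretic argument, not bookkeeping, and it is exactly the content you did not supply.

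For contrast, the paper's proof avoids valuation theory entirely: pull $S$ back to the normalization $X^\nu$; on $X^\nu$ one only needs variation along specializations $\eta\leadsto x$ with $\eta$ \emph{generic}, and since $Y$ contains no generic points these are all of the two allowed types, so the hypothesis applies; then the characterization of functions on normal schemes (\cite[2.4]{RossSN}) produces an honest element of $\Gamma(X^\nu,\cal{O}_{X^\nu})$, which descends to $\Gamma(X,\cal{O}_X)$ because it is constant on fibers of $\nu$ and $X$ is seminormal; a regular function trivially varies algebraically along all DVRs. Note also that your argument never invokes seminormality of $X$, whereas the paper's proof uses it essentially in the descent step; an argument proving a strictly stronger statement than the one in the paper should have prompted extra scrutiny of the valuation-theoretic claims you labeled routine.
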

\begin{proof} This follows readily from the techniques used in \cite{RossSN}.  We may assume $X$ is affine.  Let $\nu : X^\nu \to X$ denote the normalization.  Then our pointwise function $S$ determines a pointwise function on $X^\nu$ which is constant along the fibers of $\nu$.  The normalization is birational, so identifies generic points of $X^\nu$ with those of $X$.  Hence as a pointwise function on $X^\nu$, $S$ varies algebraically along specializations of the form $\eta \leadsto x$, with $\eta$ generic.  This is enough to conclude $S$ is induced by an element of $\Gamma(X^\nu, \cal{O}_{X^\nu})$ (see \cite[2.4]{RossSN}).  But then because $S$ is constant along the fibers of $\nu$, this element comes from $\Gamma(X, \cal{O}_X)$ and \textit{a fortiori} varies algebraically along all DVRs.  \end{proof}

As for the effectiveness of a descent datum, we recall the following results from \cite[3.2, 3.8]{ZD}.  We denote by $({\cal{L}}, \phi)$ an element of $\pic (X_\bullet)$, i.e., ${\cal{L}} \in \pic(X_0)$ and $\phi : {p_0}^\ast {\cal{L}} \xrightarrow{\sim} {p_1}^\ast {\cal{L}}$ is an isomorphism on $X_1$ satisfying the cocycle condition on $X_2$.

\begin{proposition} \cite[3.2]{ZD}
\label{sn pic inj} Let $X$ be a scheme, and let $\pi_\bullet: X_\bullet \to X$ be a proper hypercovering augmented towards $X$ which satisfies ${\cal{O}}_X = {\pi_\bullet}_\ast ({\cal{O}}_{X_\bullet})$.  Then:
\begin{itemize}
\item ${\pi_\bullet}^\ast : \pic(X) \to \pic (X_\bullet)$ is injective; and
\item the image of ${\pi_\bullet}^\ast$ consists of those $({\cal{L}}, \phi)$ satisfying the following property:

\noi for every $x \in X$, there exists an open $U \subset X$ containing $x$ and a trivialization $T_x : {\cal{L}} \arrowvert_{{\pi_0}^{-1}(U)} \xrightarrow{\sim} {\cal{O}}_{{\pi_0}^{-1}(U)}$ compatible with $\phi$ in the sense that the diagram
$$ \xymatrix{
{p_0}^\ast ({\cal{L}} \arrowvert_{{\pi_0}^{-1}(U)} ) \ar[r]^-{{p_0}^\ast T} \ar[d]^-\phi & {\cal{O}}_{{(p_0)}^{-1} ({\pi_0}^{-1}(U))} \ar[d]^-= \\
{p_1}^\ast ({\cal{L}} \arrowvert_{{\pi_0}^{-1}(U)} ) \ar[r]^-{{p_1}^\ast T} & {\cal{O}}_{{(p_1)}^{-1} ({\pi_0}^{-1}(U))} \\ }$$
commutes.
\end{itemize}
\end{proposition}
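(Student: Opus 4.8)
The plan is to leverage the single hypothesis $\cal{O}_X = {\pi_\bullet}_\ast(\cal{O}_{X_\bullet})$, which in degree zero says precisely that for every open $U \subset X$ the sequence $\Gamma(U, \cal{O}_X) \to \Gamma(\pi_0^{-1}(U), \cal{O}) \rightrightarrows \Gamma(\pi_1^{-1}(U), \cal{O})$ is exact, i.e.\ $\Gamma(U, \cal{O}_X)$ is the equalizer of the two face-map pullbacks $p_0^\ast, p_1^\ast$. Since a line bundle is locally free, the same exactness holds after tensoring with any $M \in \pic(X)$ restricted to a trivializing open; so the slogan is: \emph{a section of a line bundle pulled back to $X_0$ that is equalized by the two face maps descends uniquely to $X$}. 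Every assertion will be reduced to this descent of equalized (units of) functions, worked out locally on $X$ and then glued by uniqueness.

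For injectivity it suffices, after tensoring, to show that if ${\pi_\bullet}^\ast P$ is isomorphic to the unit object $(\cal{O}_{X_0}, \mathrm{can})$ of $\pic(X_\bullet)$ then $P$ is trivial. Such an isomorphism is a nowhere-vanishing global section $s$ of $\pi_0^\ast(P^{-1})$ that is equalized by $p_0, p_1$ (this is exactly compatibility with the canonical descent datum). Restricting to a trivializing open $U$ for $P$, the section $s$ becomes a unit in $\Gamma(\pi_0^{-1}(U), \cal{O})$ equalized by the face maps, hence descends to a function on $U$; it is again a unit because $\pi_0$ is surjective, and these local trivializations of $P$ agree on overlaps by the uniqueness of the descent. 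Thus $P \cong \cal{O}_X$, proving ${\pi_\bullet}^\ast$ injective.

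Necessity of the stated property is immediate: if $(\cal{L}, \phi) = {\pi_\bullet}^\ast M$, then for any $x$ choose $U \ni x$ trivializing $M$ and let $T_x$ be the pullback of the chosen trivialization; since $\phi$ is the canonical descent datum and $T_x$ comes from $X$, the diagram commutes by construction. The content is sufficiency. Given $(\cal{L}, \phi)$ with the property, cover $X$ by opens $U_i$ carrying compatible trivializations $T_i : \cal{L}|_{\pi_0^{-1}(U_i)} \xrightarrow{\sim} \cal{O}$. On each overlap the discrepancy $g_{ij} := T_i \circ T_j^{-1}$ is a unit in $\Gamma(\pi_0^{-1}(U_{ij}), \cal{O})$. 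Reading the commuting square as $p_0^\ast T_i = p_1^\ast T_i \circ \phi$ (and likewise for $T_j$) and using that units are central, one computes $p_0^\ast g_{ij} = (p_1^\ast T_i \circ \phi)\circ(p_1^\ast T_j \circ \phi)^{-1} = p_1^\ast g_{ij}$; so each $g_{ij}$ is equalized by the face maps and descends to a unit $h_{ij} \in \Gamma(U_{ij}, \cal{O}^\times)$. Because descent is multiplicative and $\pi_0^\ast$ is injective on functions, the relation $g_{ij} g_{jk} = g_{ik}$ forces the \v{C}ech cocycle identity for the $h_{ij}$, so the $h_{ij}$ glue to a line bundle $M \in \pic(X)$. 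By construction $\pi_0^\ast M$ has transition functions $\pi_0^\ast h_{ij} = g_{ij}$, the very transition functions of $\cal{L}$ relative to the $T_i$; hence the $T_i$ assemble to an isomorphism $\cal{L} \cong \pi_0^\ast M$, and its compatibility with $\phi$ (against the canonical datum on $\pi_0^\ast M$) is exactly the commutativity built into the $T_i$. Thus $(\cal{L}, \phi) \cong {\pi_\bullet}^\ast M$.

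The main obstacle I anticipate is the bookkeeping around the equalizer identity $p_0^\ast g_{ij} = p_1^\ast g_{ij}$: one must be careful that $g_{ij}$ is a scalar automorphism (so that it commutes with the non-scalar isomorphism $\phi$ in the manipulation above) and that the cohomological-descent hypothesis genuinely propagates from $\cal{O}_X$ to $\pi_0^\ast M|_{\pi_0^{-1}(U)}$ --- this is where local freeness of $M$ and the degree-zero exactness of ${\pi_\bullet}_\ast$ are used, rather than any full projection formula. The remaining points (surjectivity of $\pi_0$ to keep units as units, and uniqueness of descent to glue) are routine given that $\pi_\bullet$ is a proper hypercovering.
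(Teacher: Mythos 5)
Your proof is correct. Note that the paper does not actually prove this proposition internally --- it is imported from the companion paper \cite[3.2]{ZD} --- so there is no in-text argument to compare against; your equalizer/\v{C}ech argument (interpret ${\cal{O}}_X = {\pi_\bullet}_\ast({\cal{O}}_{X_\bullet})$ as exactness of $\Gamma(U,\cal{O}_X) \to \Gamma(\pi_0^{-1}(U),\cal{O}) \rightrightarrows \Gamma(\pi_1^{-1}(U),\cal{O})$, descend the unit discrepancies $g_{ij} = T_i \circ T_j^{-1}$, use surjectivity of $\pi_0$ to keep them units, and glue by uniqueness) is the standard and expected route, and every step checks out. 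The only points left implicit are routine: that $p_0^{-1}(\pi_0^{-1}(U)) = p_1^{-1}(\pi_0^{-1}(U)) = \pi_1^{-1}(U)$ (via $\pi_0 \circ p_i = \pi_1$), so the equalizer property over $U$ really does apply to the $g_{ij}$; and that the descent property is invariant under isomorphism in $\pic(X_\bullet)$, so your necessity argument for literal equalities $(\cal{L},\phi) = {\pi_\bullet}^\ast M$ covers the whole image.
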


\begin{definition} Let $X_\bullet \to X$ be a simplicial scheme augmented towards $X$, and let $(\cal{L}, \phi) \in \pic(X_\bullet)$.  We say $(\cal{L},\phi)$ \textit{satisfies the descent property} if for all $x \in X$, there exists an open $U \subset X$ and a trivialization $t : \cal{L} \arrowvert_{\pi_0^{-1}(U)}  \xrightarrow{\sim} \cal{O}_{{\pi_0^{-1}(U)}}$ which is compatible with $\phi$ in the sense of the preceding proposition. \end{definition}

\begin{remark} The Proposition applies when $X$ is seminormal and $X_\bullet$ satisfies any of the conditions in \cite[4.1 Corrigendum]{RossSN}, for example the proper hypercovering $\pi_\bullet : Y_\bullet \to C$ defined in $\ref{proper hyper}$. \end{remark}

\section{Definition of the descent datum} \label{secDD}

In this section we prove the main result, in the following form.  Having established this result, we will consider the refinements and further properties stated in $\ref{nice main thm}$.

\begin{theorem}[$\ref{existence of DD}, \ref{end game}$] \label{main thm intro} With the notation as in $\ref{proper hyper}$, let $\scr{U}_r \into P \times \mathscr{H}_r$ denote the (pullback of the) universal flat family.  Using $\ref{defn incidence}$ we may form the determinant line bundle $\cal{L}$ on $Y_0$.  Now base change everything to $\overline{U} \subset C$, the closure of the locus of disjoint cycles.  Then the following hold.
\begin{itemize}
\item The sheaf $\cal{L}$ lifts to an invertible sheaf on $Y_\bullet$, i.e., there is an isomorphism $\phi : {p_0}^\ast {\cal{L}} \cong {p_1}^\ast {\cal{L}}$ on $Y_1$ satisfying the cocycle condition on $Y_2$.  
\item The descent datum $\phi$ is effective: there is a unique $\cal{M} \in \pic(\overline{U})$ such that $(\pi^*\cal{M}, can) \cong (\cal{L}, \phi)$.
\end{itemize}
\end{theorem}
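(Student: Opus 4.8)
The plan is to establish the two bullet points in turn, reducing each to the seminormal-descent machinery of Section~\ref{secSN}. Throughout I work on the seminormal schemes $Y_0, Y_1, Y_2$, so that an isomorphism of line bundles is pinned down by its behavior on points and complete DVRs (Corollary~\ref{pic dvrs}). The essential structural input is that $\cal{L}$ carries a canonical rational section: over the locus $U$ of disjoint cycles the complex $\textbf{R}{pr}_\ast(\cal{O}_{\mathscr{U}_1} \dotimes \cal{O}_{\mathscr{U}_2})$ is acyclic, so $\cal{L}$ is canonically trivialized there (the acyclic case of Section~\ref{Kprelim}), and $U$ is dense in $\overline{U}$ by construction.

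\textbf{Constructing the descent datum $\phi$.} On $Y_1 \cap \pi^{-1}(U)$ both $p_0^\ast \cal{L}$ and $p_1^\ast \cal{L}$ inherit the canonical trivialization from acyclicity, and I set $\phi = \mathrm{id}$ there. To extend $\phi$ across the incidence locus I invoke Corollary~\ref{pic dvrs}: since $Y_1$ is seminormal, giving $\phi$ amounts to giving an invertible pointwise function on $Y_1$ — the ratio of the two canonical trivializations — that varies algebraically along complete DVRs. The incidence locus contains no generic point of $\overline{U}$, since cycles are generically disjoint, so by Lemma~\ref{dense specializations} it suffices to verify this variation along DVRs whose generic point lies in $U$, i.e.\ along one-parameter degenerations in which disjoint cycles collide.

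\textbf{The DVR computation, the heart of the argument.} Fix a complete DVR $T \to Y_1$ with generic point in $U$; the two projections furnish families of cycles $Z,W$ and $Z',W'$ on $P \times T$ with equal image in $C$ (hence equal associated cycles) and disjoint generic fibers. What must be checked is that the order of vanishing at the closed point of the canonical section of $\det \textbf{R}{pr}_\ast(\cal{O}_Z \dotimes \cal{O}_W)$ equals $\deg(Z \cdot W) \in \bb{Z}$ and depends only on the cycles; the matching of $p_0^\ast$ and $p_1^\ast$ data then follows. When the families meet properly in $P \times T$ (expected dimension $0$), this is exactly Serre's Tor-formula for intersection multiplicities together with the additivity of $\det$ on short exact sequences and the depth-$1$ length formula of Section~\ref{Kprelim}. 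The genuinely hard case is improper incidence, where Serre's formula fails: here I would first prove moving lemmas that, after passing to a suitable cover, replace the families by ones meeting properly and produce local trivializations, and then apply Grothendieck--Riemann--Roch to show these local sections glue and compute the correct order, reconciling the improper contributions with the topological filtration on $K_0$. This GRR step is where I expect the main difficulty to lie. Once $\phi$ is defined on all of $Y_1$, the cocycle condition on $Y_2$ holds on the dense open $\pi^{-1}(U)$, where every comparison map is the identity, and propagates to $Y_2$ by the uniqueness clause of Theorem~\ref{char sn rings}.

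\textbf{Effectiveness.} For the second bullet I apply Proposition~\ref{sn pic inj} to $\pi_\bullet : Y_\bullet \to \overline{U}$. The hypothesis $\cal{O}_{\overline{U}} = {\pi_\bullet}_\ast(\cal{O}_{Y_\bullet})$ holds because $\overline{U} \subset C$ is a union of irreducible components, hence seminormal, and $Y_\bullet$ was built with $Y_0 = \mathscr{H}_a \times \mathscr{H}_b$ cutting out the correct residue fields on the Chow variety (Remarks after Construction~\ref{proper hyper}, via \cite[4.1 Corrigendum]{RossSN}). Injectivity of $\pi_\bullet^\ast$ yields the uniqueness of $\cal{M}$, and $(\cal{L}, \phi)$ lies in the image precisely when it satisfies the descent property, i.e.\ admits near each $x \in \overline{U}$ a local trivialization of $\cal{L}$ on $\pi_0^{-1}(U)$ compatible with $\phi$. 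Such trivializations are supplied by the $\Div$-construction of Section~\ref{Kprelim} applied to the perfect complex, whose associated Cartier divisor gives local generators of $\cal{L}$; these are compatible with $\phi$ by construction, since $\phi$ was defined to agree with the canonical trivialization that these sections refine. This produces the desired $\cal{M} \in \pic(\overline{U})$.
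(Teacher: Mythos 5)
Your outline reproduces the paper's architecture faithfully: canonical trivialization on the disjoint locus, reduction to pointwise data and DVRs via seminormality (\ref{pic dvrs}), reduction to DVRs with generic point in $U$ via \ref{dense specializations}, Serre's Tor-formula in the proper case (this is \ref{varphi proper HC}), and effectiveness via \ref{sn pic inj}. But at the decisive step --- a DVR along which disjoint cycles collide with improper special-fiber incidence --- you write that you ``would first prove moving lemmas \ldots{} and then apply Grothendieck--Riemann--Roch to show these local sections glue.'' That sentence is a placeholder for Propositions \ref{moving lemma}, \ref{moving lemma 2}, \ref{GRR move}, \ref{proper compat moving} and \ref{1dim base no cartier}, which constitute the technical core of the paper, and your proposal gives no indication of how GRR accomplishes the gluing. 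The paper's mechanism is: move $W$ inside its rational equivalence class via short exact sequences $0 \to M_i \to \cal{O}_{B_i} \to Q^i_* \to 0$ to a cycle $\alpha$ disjoint from $Z$, obtaining a trivialization of $f_T(\cal{O}_Z,\cal{O}_W)$; the crux (\ref{GRR move}) is that the discrepancy between the trivializations induced by two different sections $s,t$ of the same bundle depends only on the cycle $[Z]$ --- proved by filtering $\cal{O}_Z$, viewing the discrepancy as a function on $\bb{P}(\Hom(M,N)\setminus 0)$, and using GRR (a Chern-character vanishing for dimension reasons) to show the relevant determinant bundle on that projective space is trivial, so the function is constant. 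Because the discrepancy depends only on $[Z]=[Z']$, it cancels in the comparison defining $\phi^{Z,Z'}$ (\ref{proper compat moving}). One also needs that sheaves supported in dimension $\leq n-a-2$ contribute trivially, which requires the second moving lemma and the seminormal-base propositions. Without this, your DVR verification restates what must be proven. (A side conflation: to define $\phi$ you only need the order of vanishing to depend on the underlying cycles; the equality with $\deg(Z\cdot W)$ is the separate third bullet of \ref{nice main thm}, proved after descent via the $K_0$-filtration.)

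The effectiveness paragraph contains a genuine error. You propose to get the $\phi$-compatible local trivializations from the $\Div$-construction applied to the perfect complex, ``whose associated Cartier divisor gives local generators of $\cal{L}$.'' This fails on two counts. First, near a point of the incidence locus the canonical rational section of $\cal{L}$ vanishes --- that is precisely why it cuts out an incidence divisor --- so it is not a trivialization there, and local equations of $\Div$ trivialize $\cal{L}$ only on opens of $Y_0$; but Proposition \ref{sn pic inj} demands a trivialization on $\pi_0^{-1}(V)$ for $V$ an open neighborhood in $\overline{U}$, a set which is not small on $Y_0$. Second, compatibility of such a choice with $\phi$ across the fibers of $\pi_0$ is exactly the point at issue, not something that holds ``by construction.'' The paper's proof of \ref{end game} instead runs the moving lemma once more: given $(z,W)$ in the incidence locus, move $W$ to $\alpha \sim W$ with $z \cap \alpha = \emptyset$; then $\alpha$ is disjoint from every subscheme parameterized by $\pi_0^{-1}(V)$ for a suitable $V \ni z$, and the composite $f(\cal{O}_{Z_{V_0}},\cal{O}_W) \cong f(\cal{O}_{Z_{V_0}},\alpha) \cong \cal{O}_{V_0}$ is compatible with $\phi$ by \ref{proper compat moving}. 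So the moving apparatus is needed not only to define $\phi$ but also to manufacture the trivializations witnessing effectiveness; your proposal supplies neither.
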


\subsection{Notation and preliminary reductions}

\begin{definition} Let $P$ be a smooth projective $k$-variety of dimension $n$.  A \textit{Hilbert datum for $P$ over $T$} consists of the following:
\begin{enumerate}
\item a seminormal $k$-scheme $T$;
\item $Z \into P_T := P \times_k T$ a $T$-flat closed subscheme of relative dimension $a$, such that the support of $Z$ has pure dimension $a$ in every fiber; and
\item $W \into P_T$ a $T$-flat closed subscheme of relative dimension $b$, such that the support of $W$ has pure dimension $b$ in every fiber;
\end{enumerate}
such that $a +b +1 \leq n$; and every point $t \in T$ admits a generization to the locus of disjoint subschemes.  Thus a Hilbert datum $(Z,W)$ is simply a morphism $T \to \scr{H}_a \times \scr{H}_b$ such that the image of every generic point of $T$ lies in a component of $\scr{H}_a \times \scr{H}_b$ with at least one pair of disjoint subschemes.

Typically we will make some construction from $(Z,W)$ and then show the construction only depends on $[Z],[W]$, the cycles underlying $Z$ and $W$.  Therefore we make the following definition.  A \textit{Hilbert-Chow datum for $P$ over $T$} is a pair of Hilbert data $(Z,W), (Z', W')$ for $P$ over $T$ such that $[Z] = [Z']$ and $[W] = [W']$.  Since the supports of $Z,W$ are assumed pure-dimensional, we have also $\supp(Z) = \supp(Z')$ and $\supp(W) = \supp(W')$.  Thus a Hilbert-Chow datum $(Z,Z',W,W')$ for $P$ over $T$ is nothing more than a morphism $T \to {(\scr{H}_a \times \scr{H}_b \times_{\scr{C}_a \times \scr{C}_b} \scr{H}_a \times \scr{H}_b)}^{sn}$ such that (after projecting to either $\scr{H}_a \times \scr{H}_b$ factor) every generic point of $T$ lands in a pair of irreducible components with at least one pair of disjoint subschemes.  

Because we work on the subscheme $\scr{H}_r$ of the Hilbert scheme, disjointness of subschemes on $\scr{H}_a \times \scr{H}_b$ corresponds exactly to disjointness of their associated cycles on $\scr{C}_a \times \scr{C}_b$.  In general, two subschemes could have disjoint associated cycles but lower-dimensional components which coincide.  So in the notation above we have  $Z \cap W = \emptyset$ if and only if $Z' \cap W = \emptyset$.

Note that given a morphism $S \to T$ of seminormal $k$-schemes and a Hilbert-Chow datum for $P$ over $T$, by pullback we obtain a Hilbert-Chow datum for $P$ over $S$.   \end{definition}

\begin{notation} The structure morphism $P_T \to T$ will be called $\pi$.

\sss \noi For $\cal{F}, \cal{G} \in \parf(P_T)$, we set  $f_T(\cal{F},\cal{G}) := \det_T \textbf{R} \pi_\ast (\cal{F} \dotimes \cal{G}) \in \pic(T)$.  

\sss \noi If $\alpha$ is a $b$-dimensional cycle on $P_T$ with $\alpha=\sum a_i W_i$, we put $f_T(\cal{O}_Z, \alpha) := \otimes_i ({f_T(\cal{O}_Z, \cal{O}_{W_i})}^{\otimes a_i})$.  In general we use the notation $[-]$ to denote the cycle associated to a subscheme or coherent sheaf: this means the top-dimensional components and their geometric multiplicities, even if, for example, $b < n-a-1$.  (In fact we used this in the preceding definition.)

\sss \noi If $T$ is affine and equal to $\spec R$, we may write $f_R$ for $f_T$. 
 
  \sss \noi We will use the subscripts ${(-)}_0$ and ${(-)}_\eta$ to denote the base change of some object to closed and generic fibers, respectively.
 
 \sss \noi By the incidence $Z \cap W$, we mean the underlying reduced algebraic subset $\supp(Z) \cap \supp(W)$.  Stated properties of $Z \cap W$ will depend only on the underlying supports $\supp(Z), \supp(W)$.
 \end{notation}

\sss \noi \textbf{Goal.}  For every Hilbert-Chow datum, we construct an isomorphism $\phi_T^{Z,Z',W,W'} : f_T(\cal{O}_Z, \cal{O}_W) \cong f_T(\cal{O}_{Z'}, \cal{O}_{W'})$ varying functorially in $T$, and with the descent property.  The essential case is $b=n-a-1$.

\begin{proposition}[reduction to fields and DVRs] \label{field DVR} To define an isomorphism $\phi_T : f_T(\cal{O}_Z, \cal{O}_W) \cong f_T(\cal{O}_{Z'}, \cal{O}_{W'})$ for each Hilbert-Chow datum, for all smooth projective $P$, so that for each $P$, the collection $\{ \phi_T \}$
\begin{enumerate}
\item is compatible with base change $S \to T$; and
\item satisfies the cocycle condition;
\end{enumerate}
it is sufficient to define an isomorphism $\phi_T$ for each Hilbert-Chow datum with $T$ the spectrum of a field or complete DVR, compatible with base change among fields and complete DVRs, and satisfying the cocycle condition on fields.
\end{proposition}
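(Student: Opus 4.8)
The plan is to run everything through Corollary~$\ref{pic dvrs}$, which reduces an isomorphism between two line bundles on a seminormal scheme to a compatible family of fiberwise identifications along fields and complete DVRs. So fix a Hilbert-Chow datum $(Z,Z',W,W')$ over a seminormal $k$-scheme $T$ and set $L := f_T(\cal{O}_Z, \cal{O}_W)$ and $M := f_T(\cal{O}_{Z'}, \cal{O}_{W'})$ in $\pic(T)$. The first step is to record the base-change behavior of $f_{(-)}$: for any morphism $g : S \to T$ of seminormal $k$-schemes, with $g' : P_S \to P_T$ induced, there is a canonical isomorphism
$$ c_g : \textbf{L}g^* f_T(\cal{O}_Z, \cal{O}_W) \xrightarrow{\sim} f_S(\cal{O}_{Z_S}, \cal{O}_{W_S}), $$
assembled from the base-change isomorphism for $\det$, the base-change isomorphism $\textbf{L}g^* \textbf{R}\pi_* \cong \textbf{R}{\pi'}_* \textbf{L}{g'}^*$ (valid because $P_T \to T$ is flat, so the square is Tor-independent), the compatibility of $\dotimes$ with pullback, and the $T$-flatness of $Z,W$ (which gives $\textbf{L}{g'}^* \cal{O}_Z = \cal{O}_{Z_S}$). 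Since each constituent is compatible with composition of morphisms, so are the $c_g$.

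Given the family $\{\phi_R\}$ over fields and complete DVRs supplied by the hypothesis, I would then, for each $f : \spec R \to T$ with $R$ a field or complete DVR, define $\beta_f : f^* L \cong f^* M$ as the composite
$$ f^* L \xrightarrow{c_f} f_R(\cal{O}_{Z_R}, \cal{O}_{W_R}) \xrightarrow{\phi_R} f_R(\cal{O}_{Z'_R}, \cal{O}_{W'_R}) \xrightarrow{c_f^{-1}} f^* M, $$
where the outer maps are the base-change isomorphisms for the two data. Compatibility of $\beta_f$ with restriction to the closed and generic points of $\spec R$ is exactly the assumed compatibility of $\{\phi_R\}$ under base change among fields and complete DVRs (restricting a complete DVR to its residue field, resp. fraction field, is such a base change), using that the $c_f$ are coherent under composition. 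Corollary~$\ref{pic dvrs}$ then yields the desired $\phi_T := \phi_T^{Z,Z',W,W'} : L \cong M$ restricting fiberwise to the $\beta_f$. Both halves of the hypothesis are consumed here, since this is precisely where Theorem~$\ref{char sn rings}$ demands algebraic variation along DVRs for \emph{existence}.

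It remains to verify the two properties, and here the point is that each is an \emph{equality} of already-constructed isomorphisms, hence a uniqueness statement. For base-change compatibility (1): given $g : S \to T$, both $g^* \phi_T$ (transported by $c_g$) and $\phi_S$ are isomorphisms $f_S(\cal{O}_{Z_S}, \cal{O}_{W_S}) \cong f_S(\cal{O}_{Z'_S}, \cal{O}_{W'_S})$ on the seminormal scheme $S$; by the uniqueness in Corollary~$\ref{pic dvrs}$ they agree once they agree after every $\spec R \to S$ with $R$ a field or complete DVR, and for each such point the composite $\spec R \to S \xrightarrow{g} T$ is a field/DVR point of $T$ where, by construction, both restrict to $\phi_R$. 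For the cocycle condition (2): this is an equality of isomorphisms over any seminormal $T$ carrying three Hilbert data with common cycles, so by the same uniqueness it may be checked fiberwise, i.e. after pullback to fields---which is exactly the cocycle condition on fields we assumed. Note that only fields, not DVRs, are needed for (1) and (2): verifying that two global isomorphisms coincide requires only pointwise agreement on residue fields, whereas the DVR hypothesis was needed solely to \emph{build} $\phi_T$.

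The main obstacle I anticipate is bookkeeping rather than conceptual: producing the base-change isomorphism $c_g$ from the separate base-change data for $\det$, $\textbf{R}\pi_*$, and $\dotimes$, and checking that the $c_g$ are coherent under composition of morphisms. This coherence is what makes the restriction of each $\beta_f$ to closed/generic points literally match the given $\phi$ over the corresponding residue fields, and so is what makes the whole reduction valid. The one genuinely delicate conceptual point to keep straight is the asymmetry just noted---existence of $\phi_T$ uses the full field-and-DVR data via Theorem~$\ref{char sn rings}$, while the base-change and cocycle checks, being equalities, need only the field-level values---which is precisely why the statement asks for the cocycle condition only on fields.
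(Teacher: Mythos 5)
Your proposal is correct and takes essentially the same approach as the paper: the paper's entire proof is the single line ``This is a consequence of $\ref{pic dvrs}$,'' and your argument is precisely the detailed unwinding of that invocation---building the fiberwise identifications $\beta_f$ from the field/DVR data via base-change isomorphisms, applying Corollary $\ref{pic dvrs}$ for existence, and using its uniqueness (ultimately Theorem $\ref{char sn rings}$) to verify base-change compatibility and the cocycle condition at field points.
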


\begin{proof} This is a consequence of $\ref{pic dvrs}$. \end{proof}

\begin{proposition}[reduction to the diagonal] \label{diagonal}To define an isomorphism $\phi_T : f_T(\cal{O}_Z, \cal{O}_W) \cong f_T(\cal{O}_{Z'},\cal{O}_{W'})$ for each Hilbert-Chow datum, for all smooth projective $P$, so that for each $P$, the collection $\{ \phi_T \}$:
\begin{enumerate}
\item is compatible with base change $S \to T$;
\item satisfies the cocycle condition; and
\item satisfies the descent property;
\end{enumerate}
it is sufficient to define an isomorphism $\phi_T$ for each Hilbert-Chow datum with $W=W'$, for all $P$, so that for each $P$, the collection $\{ \phi_T \}$ has the stated properties.
\end{proposition}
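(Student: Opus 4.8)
The plan is to build the general isomorphism by moving through the diagonal case in each factor separately, the two factors being tied together by the symmetry of the determinant construction. Write $\sigma = \sigma_T : f_T(\cal{F}, \cal{G}) \xrightarrow{\sim} f_T(\cal{G}, \cal{F})$ for the canonical isomorphism induced by the commutativity $\cal{F} \dotimes \cal{G} \cong \cal{G} \dotimes \cal{F}$; it is natural and base-change compatible. Given a Hilbert-Chow datum $(Z,Z',W,W')$ with $[Z]=[Z']$ and $[W]=[W']$, I factor the passage $(Z,W) \leadsto (Z',W')$ as
$$ f_T(\cal{O}_Z, \cal{O}_W) \xrightarrow{H} f_T(\cal{O}_{Z'}, \cal{O}_W) \xrightarrow{V} f_T(\cal{O}_{Z'}, \cal{O}_{W'}), $$
where $H = H(Z \to Z'; W)$ is supplied by the diagonal hypothesis applied to the datum $((Z,W),(Z',W))$ (second entry fixed), and $V = V(Z'; W \to W')$ is obtained by conjugating the diagonal hypothesis with $\sigma$: the pair $((W,Z'),(W',Z'))$ is again a datum with fixed second entry ($=Z'$), for the same $P$ with the two dimensions interchanged, so the hypothesis --- asserted for all $P$ --- yields an isomorphism $f_T(\cal{O}_W,\cal{O}_{Z'}) \cong f_T(\cal{O}_{W'},\cal{O}_{Z'})$, and I set $V = \sigma^{-1}\circ(\,\cdot\,)\circ\sigma$. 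I then define $\phi_T := V \circ H$. Compatibility with base change is immediate since $H$, $V$, and $\sigma$ are each base-change compatible, and the $V$-family inherits its own cocycle condition from that of the diagonal family because the $\sigma$'s telescope under composition.

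For the descent property, observe that over the open locus $U_T \subset T$ on which $\supp(Z)$ and $\supp(W)$ are disjoint, the complex $\cal{O}_Z \dotimes \cal{O}_W$ vanishes, so $f_T$ is canonically $\cal{O}_{U_T}$; the same holds for all four corners since $\supp(Z)=\supp(Z')$ and $\supp(W)=\supp(W')$. The canonical trivialization witnessing the descent property for the diagonal isomorphism $H$ (pulled back from a suitable open of $C$, as in $\ref{sn pic inj}$) is the restriction of this one, and $\sigma$ and $V$ also act as the canonical identity of $\cal{O}$ on $U_T$; hence the same local trivialization is compatible with the composite $\phi_T$, and the descent property is inherited.

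The cocycle condition is the one point that needs an argument. For three data $(Z_i,W_i)$, $i=0,1,2$, over a common Chow point, expanding $\phi_{12}\circ\phi_{01}$ and $\phi_{02}$ into their $H$- and $V$-constituents and using the cocycle conditions of the $H$-family (fixed second entry) and the $V$-family (fixed first entry) separately, the identity $\phi_{02}=\phi_{12}\circ\phi_{01}$ reduces to the commutativity of the mixed square
$$ H(Z_1 \to Z_2; W_1)\circ V(Z_1; W_0 \to W_1) = V(Z_2; W_0 \to W_1)\circ H(Z_1 \to Z_2; W_0), $$
that is, to the statement that changing the $a$-dimensional subscheme (second entry fixed) commutes with changing the $b$-dimensional subscheme (first entry fixed). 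This commutativity is the main obstacle, and I would prove it by a rigidity argument over the seminormal base. Comparing the two composites produces an automorphism of the line bundle $f_T(\cal{O}_{Z_1},\cal{O}_{W_0})$, i.e.\ a unit in $\Gamma(T,\cal{O}_T)$; by the Hilbert-datum hypothesis every generic point of $T$ admits a generization to the disjoint locus and hence (being its own only generization) lies in $U_T$, so $U_T$ is dense; on $U_T$ each of $H$, $V$, $\sigma$ is the canonical identity, so both composites restrict to the canonical identity and the unit equals $1$ on a dense open. Since $T$ is seminormal, in particular reduced, $\ref{char sn rings}$ forces the unit to be $1$ on all of $T$, giving the square and therefore the cocycle identity. (Alternatively one may first reduce to fields and complete DVRs via $\ref{field DVR}$; the dense disjoint locus is available there as well.)
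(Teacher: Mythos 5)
Your construction $\phi_T = V\circ H$ hinges on the mixed square
$$H(Z_1\to Z_2;W_1)\circ V(Z_1;W_0\to W_1)=V(Z_2;W_0\to W_1)\circ H(Z_1\to Z_2;W_0),$$
and your proof of that square fails. It rests on the claim that the disjoint locus $U_T\subset T$ is dense because ``every generic point of $T$ admits a generization to the disjoint locus and hence lies in $U_T$.'' That misreads the definition of a Hilbert datum: the generization there is taken in $\scr{H}_a\times\scr{H}_b$, not inside $T$ --- the paper's own rephrasing is that the image of every generic point of $T$ lies in a \emph{component} of $\scr{H}_a\times\scr{H}_b$ containing at least one pair of disjoint subschemes. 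Consequently $U_T$ may be empty: the paper explicitly works with Hilbert-Chow data over a field whose unique point lies in the incidence locus, and over DVRs whose specializations lie entirely inside the incidence locus (see the proposition preceding $\ref{near existence DD}$, and $\ref{existence of DD}$). For such $T$ there is no dense open on which your unit equals $1$, and the same objection kills the fallback via $\ref{field DVR}$, since a field point of incidence has $U_T=\emptyset$. (Indeed, were $U_T$ always dense, the paper's moving lemmas $\ref{moving lemma}$, $\ref{moving lemma 2}$, the GRR argument $\ref{GRR move}$, and the seminormality arguments would be largely unnecessary: one could define $\phi$ canonically on $U_T$ and extend by reducedness.) There is also a structural problem behind this: the ``change $Z$'' family (for dimensions $(a,b)$) and the ``change $W$'' family (for dimensions $(b,a)$, conjugated by $\sigma$) are two \emph{unrelated} instances of the black-box hypothesis, and nothing in the hypothesis ties one to the other; the mixed square is therefore not a formal consequence of the assumed properties. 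The same issue recurs for the descent property: a local trivialization compatible with $H$ need not be compatible with $V$, so it need not be compatible with the composite $\phi_T=V\circ H$.

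The paper avoids composing changes in the two variables altogether, which is exactly what the name of the proposition refers to. On $P\times P\times T$ one sets $Z\times W:={pr}_{13}^*Z\cap{pr}_{23}^*W$ and uses the canonical isomorphism $f_T(\cal{O}_Z,\cal{O}_W;P)\cong f_T(\cal{O}_{Z\times W},\cal{O}_\Delta;P\times P)$, where $\cal{O}_\Delta$ is the structure sheaf of the diagonal; by $\ref{HC products}$ the equalities $[Z]=[Z']$ and $[W]=[W']$ imply $[Z\times W]=[Z'\times W']$. Thus $(Z\times W,\,Z'\times W',\,\Delta,\,\Delta)$ is a Hilbert-Chow datum on $P\times P$ in which the second entry is literally constant, and a single application of the diagonal hypothesis --- for the variety $P\times P$, which is where the quantification ``for all smooth projective $P$'' is really used --- produces $\phi_T$; base change, the cocycle condition, and the descent property all transfer through the canonical isomorphisms, with no two-variable compatibility ever needed.
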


\begin{proof} On $P \times P \times T$, let $\cal{O}_\Delta$ denote the structure sheaf of the diagonal ($\times T$), i.e., the image of the closed immersion $P \times T \xrightarrow{\Delta \times 1_T} P \times P \times T$.  Given $T$-flat closed subschemes $Z, W \into P \times T$, we let $Z \times W \into P \times P \times T$ denote the scheme-theoretic intersection ${pr}^*_{13}Z \cap {pr}^*_{23}W$.  Then there is a canonical isomorphism of line bundles on $T$: $f_T(\cal{O}_Z, \cal{O}_W; P) \cong f_T(\cal{O}_{Z \times W}, \cal{O}_\Delta; P \times P)$.  Then the proposition follows from the fact that the Hilbert-Chow morphism is compatible with products ($\ref{HC products}$). \end{proof}

\begin{remark} We may even assume $W$ is constant, i.e., there is a $k$-subscheme $W_k$ such that $W = W_k \times_k T$; and we may assume $W_k$ is integral (even smooth). \end{remark}

\begin{notation} When $W$ and $W'$ are omitted from the notations, this means $W=W'$.   \end{notation} 

We start with some easy cases of our goal.

\begin{lemma} \label{varphi disjoint} Let $(Z,W)$ be a Hilbert datum over any base $T$ such that $Z \cap W = \emptyset$.  Then there is a canonical trivialization $\varphi^Z_T : f_T(\cal{O}_Z, \cal{O}_W) \cong \cal{O}_T$ which is compatible with base change. \end{lemma}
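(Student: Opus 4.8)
The plan is to observe that the defining complex is acyclic on $P_T$, and then invoke the canonical trivialization of the determinant of an acyclic complex recalled in Section $\ref{Kprelim}$. Unwinding the notation, $f_T(\cal{O}_Z, \cal{O}_W) = \det_T \textbf{R}\pi_\ast (\cal{O}_Z \dotimes \cal{O}_W)$, where $\pi : P_T \to T$ is the structure morphism. So the whole lemma rests on showing that $\cal{O}_Z \dotimes \cal{O}_W$ is acyclic. The cohomology sheaves of $\cal{O}_Z \dotimes \cal{O}_W$ are the Tor sheaves $\Tor_i^{\cal{O}_{P_T}}(\cal{O}_Z, \cal{O}_W)$, and since Tor may be computed locally, each such sheaf is supported inside $\supp(\cal{O}_Z) \cap \supp(\cal{O}_W) = \supp Z \cap \supp W$. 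The disjointness hypothesis $Z \cap W = \emptyset$ makes this intersection empty, so every Tor sheaf vanishes and $\cal{O}_Z \dotimes \cal{O}_W \cong 0$ in $D(P_T)$.

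Since $\textbf{R}\pi_\ast$ is a triangulated functor it carries the zero object to the zero object, whence $\textbf{R}\pi_\ast(\cal{O}_Z \dotimes \cal{O}_W) \cong 0$ in $D(T)$ is acyclic as well. The canonical trivialization of the determinant of an acyclic complex (\cite[Lemma 2]{KM}, recalled in Section $\ref{Kprelim}$) then supplies a canonical isomorphism
$$ \varphi^Z_T : f_T(\cal{O}_Z, \cal{O}_W) = \det_T \textbf{R}\pi_\ast(\cal{O}_Z \dotimes \cal{O}_W) \xrightarrow{\sim} \cal{O}_T, $$
which is exactly the rational section of $\cal{L}$ over the locus of disjoint subschemes described at the end of Section $\ref{secHC}$.

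The only point I expect to require care is base change compatibility along a morphism $g : S \to T$ of seminormal $k$-schemes. First one checks that disjointness is preserved: $\supp Z_S$ and $\supp W_S$ are the preimages under $1_P \times g$ of the disjoint closed sets $\supp Z$ and $\supp W$, hence are disjoint, so $\varphi^Z_S$ is defined by the identical recipe. Then the base change isomorphisms of the determinant functor, combined with the compatibility of $\textbf{R}\pi_\ast$ of a perfect complex with base change, identify $g^\ast \varphi^Z_T$ with $\varphi^Z_S$: both $\cal{O}_Z \dotimes \cal{O}_W$ and its pullback are the zero complex, and the trivialization of $\det$ of an acyclic complex is natural with respect to pullback, so the two trivializations agree. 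This verification reduces entirely to the functoriality built into the determinant formalism, so no genuine obstacle arises.
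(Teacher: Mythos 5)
Your proposal is correct and follows exactly the paper's argument: disjointness of supports forces $\cal{O}_Z \dotimes \cal{O}_W$, hence $\textbf{R}\pi_\ast(\cal{O}_Z \dotimes \cal{O}_W)$, to be acyclic, and the canonical trivialization of the determinant of an acyclic complex (together with its naturality under pullback) gives both the isomorphism and its base change compatibility. Your write-up simply makes explicit the details (Tor supports, preservation of disjointness under base change) that the paper leaves implicit.
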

\begin{proof} The hypothesis implies $\cal{O}_Z \dotimes \cal{O}_W$, hence also $\textbf{R} \pi_\ast (\cal{O}_Z \dotimes \cal{O}_W)$, is acyclic.  The pullback via $S \to T$ is also acyclic, and the trivialization of the determinant of an acyclic complex is compatible with base change. \end{proof}

\begin{remark} The canonical isomorphism $\varphi$ of the lemma has an additivity property in each variable.  For example, if $\cal{F}_1 \to \cal{F}_2 \to \cal{F}_3 \to^{+1}$ is a distinguished triangle in $\parf(P_T)$ such that $\supp(\cal{F}_i) \cap W = \emptyset$ for all $i$, then the isomorphism $f_T(\cal{F}_1, \cal{O}_W) \otimes f_T(\cal{F}_3,\cal{O}_W) \cong f_T(\cal{F}_2, \cal{O}_W)$ induced by the triangle corresponds, via the identifications $\varphi_T^{\cal{F}_i}$, to multiplication $\cal{O}_T \otimes \cal{O}_T \to \cal{O}_T$. \end{remark}

\begin{corollary} \label{phi disjoint} Among Hilbert-Chow data satisfying $Z \cap W = Z' \cap W = \emptyset$, there exists a collection of isomorphisms $\phi_T^{Z,Z'} : f_T(\cal{O}_Z, \cal{O}_W) \cong f_T(\cal{O}_{Z'}, \cal{O}_W)$ which is compatible with base change and satisfies the cocycle condition.  \end{corollary}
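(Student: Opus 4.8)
The plan is to reduce everything to the canonical trivializations already furnished by Lemma~\ref{varphi disjoint}. Given a Hilbert-Chow datum with $Z \cap W = Z' \cap W = \emptyset$, that lemma produces canonical trivializations $\varphi_T^Z : f_T(\cal{O}_Z, \cal{O}_W) \cong \cal{O}_T$ and $\varphi_T^{Z'} : f_T(\cal{O}_{Z'}, \cal{O}_W) \cong \cal{O}_T$, each of which is compatible with base change. I would then simply set
$$\phi_T^{Z,Z'} := (\varphi_T^{Z'})^{-1} \circ \varphi_T^Z : f_T(\cal{O}_Z, \cal{O}_W) \xrightarrow{\sim} f_T(\cal{O}_{Z'}, \cal{O}_W),$$
that is, the identification obtained by passing through the trivial bundle $\cal{O}_T$. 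All of the content of the corollary is then a formal consequence of properties of $\varphi$.

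Compatibility with base change is immediate: for any $S \to T$, each of $\varphi_T^Z, \varphi_T^{Z'}$ pulls back to the corresponding trivialization over $S$ (this is the base-change clause of Lemma~\ref{varphi disjoint}), so the composite $\phi_T^{Z,Z'}$ pulls back to $\phi_S^{Z,Z'}$. For the cocycle condition I would work over a triple, i.e.\ with three Hilbert data $Z, Z', Z''$ sharing the common cycle and all disjoint from $W = W' = W''$ (after the reduction to the diagonal, $\ref{diagonal}$). Since each $\phi$ factors canonically through $\cal{O}_T$, one computes
$$\phi_T^{Z',Z''} \circ \phi_T^{Z,Z'} = (\varphi_T^{Z''})^{-1} \circ \varphi_T^{Z'} \circ (\varphi_T^{Z'})^{-1} \circ \varphi_T^Z = (\varphi_T^{Z''})^{-1} \circ \varphi_T^Z = \phi_T^{Z,Z''},$$
so the cocycle identity holds on the nose.

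I do not expect a genuine obstacle here: the whole statement follows from the existence and canonicity of the trivialization of the determinant of an acyclic complex. The one point requiring a moment of care is that the trivializations $\varphi$ be \emph{genuinely} canonical---equal, not merely isomorphic up to an invertible scalar---so that the inner cancellation $\varphi_T^{Z'} \circ (\varphi_T^{Z'})^{-1} = \mathrm{id}$ in the cocycle computation is an exact identity rather than a relation up to units. This is guaranteed because the trivialization of $\det$ of an acyclic complex is uniquely determined by \cite[Lemma 2]{KM}. This corollary is really a warm-up handling the locus where both families are already disjoint from $W$; the substantive work of extending $\phi$ across the incidence locus is deferred to the later lemmas.
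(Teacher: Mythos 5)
Your proposal is correct and is essentially identical to the paper's own proof: both define $\phi_T^{Z,Z'} := (\varphi_T^{Z'})^{-1} \circ \varphi_T^Z$ via the canonical trivializations of Lemma~\ref{varphi disjoint}, with base change inherited from that lemma and the cocycle condition following by formal cancellation. Your additional remark that the trivializations are genuinely canonical (by \cite[Lemma 2]{KM}) is a sound observation already implicit in the paper's use of the word ``canonical.''
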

\begin{proof} We define $\phi_T^{Z,Z'; W} := {(\varphi^{Z'}_T)}^{-1} \circ \varphi^Z_T : f_T(\cal{O}_Z, \cal{O}_W) \cong f_T(\cal{O}_{Z'}, \cal{O}_W)$ to be the composition of the canonical trivializations.  This is compatible with base change because each $\varphi^Z_T$ is.  We check the cocycle condition:
$$\phi_T^{Z',Z''; W} \circ \phi_T^{Z,Z'; W} = ({(\varphi^{Z''}_T)}^{-1} \circ \varphi^{Z'}_T) \circ ({(\varphi^{Z'}_T)}^{-1} \circ \varphi^Z_T) = {(\varphi^{Z''}_T)}^{-1} \circ \varphi^Z_T = \phi_T^{Z,Z''; W}.$$
\end{proof}

From now on we keep the collection $\{ \phi_T \}$ whose existence is asserted in $\ref{phi disjoint}$.  The idea is to gradually extend it to a collection over Hilbert-Chow data with increasingly complicated incidence structure, until we have covered the whole moduli space.  Note that an isomorphism of line bundles on a reduced (e.g., seminormal) scheme is determined by its restriction to generic points (i.e., points of depth 0).  Since our base $T$ is always reduced, when we have defined an isomorphism $\phi$ for a more general class of Hilbert-Chow data, to check agreement with previously defined isomorphisms it suffices to check agreement on generic points.

\begin{lemma} \label{varphi proper H, D adds} Let $(Z,W)$ be a Hilbert datum over  $T$, and suppose that $(Z \cap W)_\eta  = \emptyset$ for all generic points $\eta \in T$.  (This holds, for example, whenever $Z$ and $W$ intersect properly on $P_T$.)  Then there exists a Cartier divisor $D_{Z,W}$ on $T$ and a canonical isomorphism $\varphi^Z_T : f_T(\cal{O}_Z, \cal{O}_W) \cong \cal{O}_T(D_{Z,W})$ characterized by agreeing with the trivialization $\varphi^{Z_\eta}_\eta$ for every generic point $\eta \in T$.  When $Z \cap W = \emptyset$, $D_{Z,W} = 0$ and $\varphi^Z_T$ is the canonical trivialization.  The formation of the divisor $D_{Z,W}$ and the isomorphism $\varphi^Z_T$ are compatible with base change $S \to T$ preserving the generic disjointness.

Furthermore, the formation of $D_{Z,W}$ is additive in each variable: if $\cal{F}_1 \to \cal{F}_2 \to \cal{F}_3 \to^{+1}$ is a distinguished triangle in $\parf(P_T)$ such that $(\supp(\cal{F}_i) \cap W)_\eta = \emptyset$ for all generic points $\eta \in T$, all $i$; then $D_{\cal{F}_1,W} + D_{\cal{F}_3,W} = D_{\cal{F}_2,W}$; and the triangle induces, upon application of $\varphi_T^{\cal{F}_i}$, the canonical isomorphism $\cal{O}_T(D_{\cal{F}_1,W}) \otimes \cal{O}_T(D_{\cal{F}_3,W}) \cong \cal{O}_T(D_{\cal{F}_2,W})$.  Similarly we have an additivity property in the variable $W$.
\end{lemma}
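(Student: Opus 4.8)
The plan is to apply the associated-Cartier-divisor machinery recalled in Section~\ref{Kprelim} to the perfect complex
\[
\cal{G} := \textbf{R}\pi_\ast(\cal{O}_Z \dotimes \cal{O}_W) \in \parf(T),
\]
whose determinant is by definition $f_T(\cal{O}_Z, \cal{O}_W)$. By \cite[Ch.II]{KM} it suffices to check that $\cal{G}$ is acyclic at every point of $T$ of depth $0$; granting this, $\Div(\cal{G})$ and the canonical isomorphism $\cal{O}_T(\Div(\cal{G})) \cong \det_T(\cal{G})$ exist, and I set $D_{Z,W} := \Div(\cal{G})$ and let $\varphi^Z_T$ be the inverse of this isomorphism.

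First I would verify the acyclicity hypothesis. Since $T$ is seminormal, hence reduced, its depth-$0$ points are exactly its generic points $\eta$, at each of which $\cal{O}_{T,\eta} = \kappa(\eta)$. The localization $\spec \cal{O}_{T,\eta} \to T$ is flat, so it commutes with $\textbf{R}\pi_\ast$ and with the derived tensor product (here I use that $\pi$ is flat and proper and that $\cal{G}$ is perfect by the stability facts cited in Construction~\ref{defn incidence}); thus $\cal{G}_\eta \cong \textbf{R}(\pi_\eta)_\ast(\cal{O}_{Z_\eta} \dotimes \cal{O}_{W_\eta})$. The hypothesis $(Z \cap W)_\eta = \emptyset$ makes $\cal{O}_{Z_\eta} \dotimes \cal{O}_{W_\eta}$ acyclic (Lemma~\ref{varphi disjoint}), hence so is $\cal{G}_\eta$. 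This gives acyclicity at every depth-$0$ point. The canonical isomorphism of \cite{KM} extends the depth-$0$ trivializations $\cal{O}_{T,\eta} \cong \det_\eta \cal{G}$, and at each generic $\eta$ this trivialization is precisely the acyclic trivialization $\varphi^{Z_\eta}_\eta$ of Lemma~\ref{varphi disjoint}. Because an isomorphism of line bundles on the reduced scheme $T$ is determined by its values at generic points, this yields both the stated characterization of $\varphi^Z_T$ and, in the globally disjoint case $Z \cap W = \emptyset$, the vanishing $D_{Z,W} = 0$ together with the identification of $\varphi^Z_T$ with the canonical trivialization (using that $\Div$ of an acyclic complex is $0$, \cite[Lemma 2]{KM}).

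For base change along $g : S \to T$ preserving generic disjointness, I would invoke \cite[Thm.3(v)]{KM}, which requires $\textbf{L}g^\ast \cal{G}$ to be acyclic at every depth-$0$ point of $S$. As $P_T \to T$ is flat, the square defining $P_S = P_T \times_T S$ is Tor-independent, so $\textbf{L}g^\ast \cal{G} \cong \textbf{R}(\pi_S)_\ast(\cal{O}_{Z_S} \dotimes \cal{O}_{W_S})$; restricting to a generic point $\xi$ of $S$ and using the preserved disjointness $(Z_S \cap W_S)_\xi = \emptyset$ exactly as above gives the needed acyclicity. Then \cite[Thm.3(v)]{KM} yields $D_{Z_S,W_S} = g^\ast D_{Z,W}$ with matching isomorphisms.

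Finally, for additivity in $Z$: given a triangle $\cal{F}_1 \to \cal{F}_2 \to \cal{F}_3 \to^{+1}$ with $(\supp(\cal{F}_i) \cap W)_\eta = \emptyset$, applying the triangulated functors $- \dotimes \cal{O}_W$ and then $\textbf{R}\pi_\ast$ produces a distinguished triangle $\textbf{R}\pi_\ast(\cal{F}_1 \dotimes \cal{O}_W) \to \textbf{R}\pi_\ast(\cal{F}_2 \dotimes \cal{O}_W) \to \textbf{R}\pi_\ast(\cal{F}_3 \dotimes \cal{O}_W) \to^{+1}$ in $\parf(T)$, each term of which is acyclic at depth-$0$ points since $\supp(\cal{F}_i \dotimes \cal{O}_W) \subseteq \supp(\cal{F}_i) \cap W$. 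The additivity of $\Div$ on short exact sequences \cite[Thm.3(ii)]{KM}, extended to distinguished triangles using that $T$ is reduced (\cite[Prop.~7]{KM}), then gives $D_{\cal{F}_2,W} = D_{\cal{F}_1,W} + D_{\cal{F}_3,W}$ together with the compatibility of the induced isomorphism with the canonical multiplication; additivity in $W$ follows by the symmetric argument, using $\cal{O}_Z \dotimes \cal{O}_W = \cal{O}_W \dotimes \cal{O}_Z$. I expect the main obstacle to be the careful bookkeeping of the acyclicity-at-depth-$0$ hypotheses that license each invocation of the $\Div$ construction and its base-change and additivity properties — namely commuting $\textbf{R}\pi_\ast$ with restriction to generic points and with the base change $g$, and confirming that the passage from short exact sequences to distinguished triangles is exactly what the reducedness of $T$ provides.
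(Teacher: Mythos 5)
Your proposal is correct and takes essentially the same route as the paper: the paper's proof likewise consists of noting that generic disjointness makes $\textbf{R}\pi_\ast(\cal{O}_Z \dotimes \cal{O}_W)$ acyclic at the depth-$0$ points of the (reduced) base so that the $\Div$ construction of \cite[Ch.II]{KM} applies, with base change from \cite[Thm.3(v)]{KM} and additivity from \cite[Thm.3(ii)]{KM}; you merely make explicit the hypothesis-checking (flat localization, Tor-independent base change, reducedness to pass from short exact sequences to distinguished triangles) that the paper leaves implicit. The only point you do not address is the parenthetical claim that proper intersection implies generic disjointness, which the paper justifies by a codimension count showing $\dim(Z \cap W) < \dim(T)$, so that $\supp(\cal{O}_Z \dotimes \cal{O}_W)$ cannot dominate any component of $T$.
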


\begin{proof} To see the generic disjointness is satisfied when $Z$ and $W$ intersect properly, note that $Z$ (resp.~$W$) has codimension $\geq b+1$ (resp.~$\geq a+1$) in $P_T$, hence $Z \cap W$ has codimension $\geq a+b+2$ in $P_T$.  Therefore $\dim(Z \cap W) < \dim (T)$, so the support of $\cal{O}_Z \dotimes \cal{O}_W$ cannot dominate any component of $T$.
The hypothesis on the incidence means the construction of \cite[Ch.II]{KM} applies.  The compatibility with base change is a consequence of \cite[Thm.3(v)]{KM}; and the additivity is inherited from \cite[Thm.3(ii)]{KM}.
\end{proof}

\begin{remark} Our essential task is to show that given a Hilbert-Chow datum $(Z,Z',W)$, we have $D_{Z,W} = D_{Z',W}$. \end{remark}

\subsection{Moving lemmas}

\begin{proposition} \label{moving lemma} Let $(Z,W)$ be a Hilbert datum over $T$ the spectrum of a local ring, with $W = W_k \times_k T$ for a $b$-dimensional $k$-scheme $W_k$, and suppose $(Z \cap W)_\eta = \emptyset$ for every generic point $\eta \in T$.  Then there exist subvarieties $B_1, \ldots , B_n \subset P$ of dimension $b+1$, $M_i \in \pic(B_i)$, and short exact sequences:
$$0 \to M_i \xrightarrow{s^i_0} \cal{O}_{B_i} \to Q^i_0 \to 0$$
$$0 \to M_i \xrightarrow{s^i_\infty} \cal{O}_{B_i} \to Q^i_\infty \to 0$$
such that:
\begin{enumerate}
\item $(Z \cap {\supp(Q^i_*)} )_\eta = \emptyset$ for all $i$, all generic $\eta$; and
\item the $b$-dimensional cycle $[W] + \sum_i ([Q^i_0] - [Q^i_\infty])$ is disjoint from $Z$.
\end{enumerate}
\end{proposition}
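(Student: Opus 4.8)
The plan is to recognize the required data as the presentation of a \emph{rational equivalence}, and to obtain that equivalence from a version of the moving lemma adapted to the dimension gap $a+b+1\le n$. The key numerical point is that for each $t\in T$ one has $\dim W+\dim(\supp Z_t)=b+a<n$, so a moved cycle meeting $\supp(Z_t)$ properly in $P$ meets it in dimension $\le a+b-n\le -1$, i.e.\ not at all; thus ``general position'' will directly mean ``disjoint.'' I would first reduce to $W_k$ integral, treating a reducible $W_k$ componentwise and concatenating the resulting sequences (this only enlarges the index set). To translate the exact sequences into geometry, note that a single term is nothing but a principal divisor: given a $(b+1)$-dimensional subvariety $B\subset P$ and $f\in k(B)^\times$ with $\divr_B(f)=D_0-D_\infty$ (difference of effective zero and pole divisors), one sets $M=\cal{O}_B(-D_0)$, takes $s_0:M\into\cal{O}_B$ the tautological inclusion and $s_\infty:M\into\cal{O}_B$ the inclusion twisted by $f$; then $Q_0=\cal{O}_{D_0}$, $Q_\infty=\cal{O}_{D_\infty}$ and $[Q_0]-[Q_\infty]=\divr_B(f)$. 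A finite collection of such terms realizes, after pushforward to $P$, an arbitrary rational equivalence. So the proposition asserts exactly that $[W]$ admits a representative $[C]:=[W]+\sum_i([Q_0^i]-[Q_\infty^i])$ disjoint from $Z$, through an equivalence whose individual zero and pole loci are moreover disjoint from $Z$ over the generic points of $T$.

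For the two conditions I would use the fibrewise criterion for disjointness from $Z$: since every point of $\supp Z$ lies over a point of $T$ and $C\times T$ is a constant family, $(C\times T)\cap\supp Z=\emptyset$ is equivalent to $C\cap\supp(Z_s)=\emptyset$ in $P$ together with $C_{\kappa(\eta)}\cap\supp(Z_\eta)=\emptyset$ in $P_{\kappa(\eta)}$ for every generic point $\eta\in T$. I would move $W$ within its rational equivalence class over the algebraically closed field $k$ into general position with respect to both the $a$-dimensional $\supp(Z_s)\subset P$ and the base change of $\supp(Z_\eta)$; by the count $a+b<n$ each such intersection is then empty, which is condition (2). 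The hypothesis $(Z\cap W)_\eta=\emptyset$ means $W$ itself already meets the generic-fibre conditions, so only the special fibre genuinely requires correcting. Condition (1) is that each zero/pole locus $D_*^i$---a $b$-dimensional cycle---meets the $a$-dimensional $\supp(Z_\eta)$ emptily over $\eta$, which likewise follows from general position once the $B_i$ and $f_i$ are chosen generically.

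The main obstacle is the simultaneous control demanded by conditions (1) and (2): I must produce a single rational equivalence whose \emph{terms}, not merely whose net effect, stay disjoint from $Z$ over the generic fibre, while its endpoint is disjoint from $Z$ over both the generic and the special fibre. The natural way to organize this is to build the equivalence as a $(b+1)$-dimensional cycle $V\subset P\times\bb{P}^1$ with $V_0$ containing $W_k$ with multiplicity one and $V_\infty$ the moved cycle: the images in $P$ of the components of $V$ furnish the $B_i$, projection to $\bb{P}^1$ furnishes the $f_i$, and the fibres over $0,\infty$ furnish $D_0^i,D_\infty^i$. Ensuring that every fibre of $V\to\bb{P}^1$ is in general position with respect to $\supp(Z_\eta)$ over $\eta$ is a Bertini--Kleiman genericity statement, and arranging that the excess part of $V_0$ together with all of $V_\infty$ avoids $\supp(Z_s)$ is where the construction must be most careful; a secondary technical point, handled by choosing the hypersurfaces cutting out the $B_i$ generally, is the Cartierness along $W_k$ needed to exhibit the section $s_\infty$ cutting out $W_k$ with its residual. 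Finally, padding the collection with trivial sequences $M_i=\cal{O}_{B_i}$, $s_0^i=s_\infty^i=\mathrm{id}$ (so $Q_*^i=0$) brings the index set up to exactly $n$.
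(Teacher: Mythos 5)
Your overall strategy coincides with the paper's: translate the short exact sequences into a rational equivalence, reduce condition (2) to the closed fibre (locality of $T$ plus properness of $P$ makes disjointness from $Z_0$ imply disjointness from $Z$), invoke Chow's moving lemma over $k$ against $Z_0$, and get condition (1) by genericity of the moving data relative to $Z_\eta$, using that $W$ itself already avoids $Z_\eta$. However, there is a genuine gap, and it sits exactly at the step you flag as ``where the construction must be most careful'': you propose to obtain the required general position of the fibres of $V \to \bb{P}^1$ from ``a Bertini--Kleiman genericity statement.'' Kleiman transversality needs a connected algebraic group acting transitively on the ambient variety, and a general smooth projective $P$ admits no such action; there is no parameter space over which your genericity claim can be quantified directly on $P$. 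This is not a technicality: the absence of translations on $P$ is precisely why Chow's moving lemma is proved by projection, and your sketch never identifies the mechanism that produces the moves, so the simultaneous control of the intermediate terms --- the whole content of the proposition beyond the black-box moving lemma --- is left unproved.

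The paper fills this hole by opening up Roberts' proof of the moving lemma. Fix an embedding $P \into \bb{P}^{2n+1}$; each step of the move is given by a generic centre $L \cong \bb{P}^n$ disjoint from $P$, inducing a finite projection $\pi_L : P \to \bb{P}^n$, together with a generic $g \in PGL(n+1)$, and it replaces $[W]$ by $\pi_L^* (g \cdot {\pi_L}_*[W]) - ( \pi_L^* {\pi_L}_*[W] - [W])$. The key structural fact is that all intermediate supports --- in particular the supports of the $Q^i_*$ --- lie inside $\pi_L^*({\pi_L}_* W) \cup \pi_L^*(g \cdot {\pi_L}_* W)$. Genericity is then quantified over the pairs $(L,g)$, which do live in homogeneous parameter spaces: generic $g$ makes $\pi_L^*(g \cdot {\pi_L}_* W)$ disjoint from $Z_0$ and from $Z_\eta$; generic $L$ lowers the excess intersection of the residual $\pi_L^* {\pi_L}_*[W] - [W]$ with $Z_0$ and makes it avoid $Z_\eta$; and the unmovable copy of $W$ inside $\pi_L^* {\pi_L}_* W$ is harmless exactly because of the hypothesis $(Z \cap W)_\eta = \emptyset$. (For genericity against $Z_\eta$, defined over $\kappa(\eta)$ rather than $k$, one notes the bad locus in the $(L,g)$-space has constructible image of smaller dimension, so a $k$-generic choice suffices --- the same point you need but do not argue.) Two smaller remarks: only the fibres of $V$ over $0$ and $\infty$ enter conditions (1) and (2), so your demand that \emph{every} fibre of $V \to \bb{P}^1$ be in general position with respect to $Z_\eta$ is stronger than needed and in general unachievable; and your dictionary between exact sequences and principal divisors, including the Cartier-ness caveat, is fine and is implicit in the paper.
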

\begin{proof} We let $Z_0$ denote the cycle over the closed fiber of $T$.  By Chow's moving lemma \cite[Thm.]{Rob}, we can find a cycle $\alpha$ rationally equivalent to $[W_k]$ and satisfying $\alpha \cap Z_0 = \emptyset$; hence also $\alpha \cap Z = \emptyset$ on $P_T$.  This shows we can achieve the second property; the issue is to show we can move $W$ in such a way that the first property is satisfied.

Suppose we have a closed immersion $P \into \bb{P}^{2n+1}$.  Then every step in moving a cycle involves essentially two choices: a linear space $L \cong \bb{P}^n \into \bb{P}^{2n+1}$, disjoint from $P$, from which projection induces a finite morphism $\pi_L : P \to \bb{P}^n$; and an element $g \in PGL(n+1)$.  The excess intersection $e(Z_0, \pi^*_L {\pi_L}_* [W] - [W])$ is smaller than $e(Z_0, [W])$ for generic $L$; and $\pi_L^* (g \cdot {\pi_L}_* W)$ is disjoint from $Z_0$ for generic $g$.

If $(Z \cap W)_\eta = \emptyset$ for all generic points $\eta \in T$, then for generic choices of $L,g$, we have $(Z \cap \pi_L^* (g \cdot {\pi_L}_* [W]) )_\eta = (Z \cap \pi_L^* ( {\pi_L}_*[W]) )_\eta = \emptyset$.  The $Q_*$s are supported in subsets of the form $\pi_L^* (g \cdot {\pi_L}_* W)$ and $\pi_L^* ( {\pi_L}_* W)$, hence the result.

\end{proof}

We need a slight variation for subvarieties $W$ as in $\ref{moving lemma}$ of dimension strictly smaller than $n-a-1$; eventually we need that such subvarieties do not contribute to $D_{Z,W}$.  

\begin{proposition} \label{moving lemma 2} Let $(Z,W)$ be a Hilbert datum over $T$ a base of dimension $\leq 1$, with $W = W_k \times_k T$ for a $k$-scheme $W_k$, and suppose $(Z \cap W)_\eta = \emptyset$ for every generic point $\eta \in T$.  

Suppose further that $\dim(W_k)= b \leq n-a-2$.  Then there exist subvarieties $B_1, \ldots , B_n \subset P$ of dimension $b+1$, $M_i \in \pic(B_i)$, and short exact sequences:
$$0 \to M_i \xrightarrow{s^i_0} \cal{O}_{B_i} \to Q^i_0 \to 0$$
$$0 \to M_i \xrightarrow{s^i_\infty} \cal{O}_{B_i} \to Q^i_\infty \to 0$$
such that:
\begin{enumerate}
\item $(Z \cap B_i )_\eta = \emptyset$ for all $i$, all generic $\eta$; and
\item the $b$-dimensional cycle $[W] + \sum_i ([Q^i_0] - [Q^i_\infty])$ is disjoint from $Z$.
\end{enumerate}
\end{proposition}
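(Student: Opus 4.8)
The plan is to run the construction in the proof of \ref{moving lemma} verbatim and then insert one dimension count to upgrade its conclusion (1) from disjointness of the quotients $\supp(Q^i_*)$ to disjointness of the whole auxiliary varieties $B_i$. Concretely, I would fix a closed immersion $P \into \bb{P}^{2n+1}$ and, performing Chow's moving lemma \cite[Thm.]{Rob} on $[W_k]$, choose at each step a linear $L \cong \bb{P}^n$ disjoint from $P$ and an element $g \in PGL(n+1)$. The finite projection $\pi_L : P \to \bb{P}^n$ together with the translate $g$ produces, exactly as before, the $(b+1)$-dimensional varieties $B_i$, the line bundles $M_i$, and the two short exact sequences whose quotients $Q^i_0, Q^i_\infty$ are linearly equivalent effective divisors on $B_i$; and Roberts' theorem gives, for generic choices, that $[W] + \sum_i([Q^i_0]-[Q^i_\infty])$ is disjoint from $Z$, which is conclusion (2).

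The new input is the dimension hypothesis $b \le n-a-2$, so that $\dim B_i = b+1 \le n-a-1$. Combined with $\dim T \le 1$, a dimension count in $P_T$ gives
$$\dim\bigl(Z \cap (B_i \times_k T)\bigr) \;\le\; a + (b+1) + \dim T - n \;\le\; (n-1) + 1 - n \;=\; 0.$$
Hence for a generic choice of the moving data the intersection $Z \cap B_i$ is at most a finite set of closed points of $P_T$, which therefore lie over closed points of $T$. In particular this intersection misses the generic point of every positive-dimensional component of $T$. For the finitely many zero-dimensional components, where the relevant (isolated) point $\eta$ is simultaneously generic and closed, I would instead argue fibrewise: in $P_{\kappa(\eta)}$ one has $\dim Z_\eta + \dim B_i = a + (b+1) \le n-1$, so a generic $B_i$ satisfies $(B_i)_{\kappa(\eta)} \cap Z_\eta = \emptyset$. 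Together these give $(Z \cap B_i)_\eta = \emptyset$ for every generic $\eta$, which is conclusion (1).

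It remains to see that a single choice of $(L,g)$ realises both conclusions at once. Each is an open dense condition on the parameter space of projection centres and translations --- disjointness of the moved cycle from $Z$ on the one hand, and the expected-dimension bound on $Z \cap B_i$ (together with the finitely many fibrewise emptiness conditions over isolated points of $T$) on the other --- so over the (algebraically closed, hence infinite) ground field $k$ their common refinement is again dense and a valid choice exists. The main obstacle I anticipate is precisely this compatibility: one must check that imposing the stronger disjointness $(Z \cap B_i)_\eta = \emptyset$ does not conflict with the genericity already used to secure conclusion (2), and that the expected dimension $\le 0$ is actually \emph{attained} for generic moving data rather than merely bounded above. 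Everything else transfers unchanged from \ref{moving lemma}.
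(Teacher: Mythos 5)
There is a genuine gap, and it is the one you yourself flag at the end: the assertion that ``for a generic choice of the moving data'' the intersection $Z \cap (B_i \times_k T)$ has at most the expected dimension (and, over isolated points of $T$, that a ``generic $B_i$'' misses $Z_\eta$) is precisely the hard content of the proposition, and it cannot be invoked for free. The expected-dimension formula is an upper bound only in the presence of a transversality theorem, and no Kleiman-type transversality is available here: $P$ is an arbitrary smooth projective variety, not a homogeneous space, and the $B_i$ are not free translates inside $P$ --- they are constrained to lie in preimages $\pi_L^{-1}(-)$ under the finite projections, and they depend on the choice of path in $PGL(n+1)$ realizing the rational equivalence, a datum your construction never even fixes (the proof of \ref{moving lemma} never needs to exhibit the $B_i$, since its conclusion (1) concerns only the quotients $Q^i_*$, which live on the two endpoint cycles). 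Note also that each $B_i$ contains a piece of $\pi_L^{-1}(\pi_L(W_k)) \supseteq W_k$, so the position of the $B_i$ is genuinely tied to $W$; density of your disjointness condition in the parameter space is exactly what must be proved, and ``intersect two dense open conditions'' does not start until both are known to be dense.

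The paper closes this gap with a construction absent from your proposal. It works with the sweep $pr_1(Z) \subset P$, of dimension $\leq a+1$ because $\dim T \leq 1$, and chooses the generic projection $\pi$ so that $\pi(pr_1(Z_U))$ and $\pi(W_k)$ remain disjoint, which is possible since $(a+1)+(n-a-2) < n$. It then realizes the rational equivalence by moving $\pi_\ast(W_k)$ along a \emph{general rational curve} $C \into PGL(n+1)$ and completing by flat limits to a family $\scr{Y}^{fl}$ over $\bb{P}^1$, so that every $B_i$ lies in $\pi^{-1}$ of the sweep of this family, a set of dimension $\leq n-a-1$. The hypothesis $b \leq n-a-2$ enters exactly here: the locus of $g \in PGL(n+1)$ for which $g \cdot \pi_\ast(W_k)$ meets $\pi(Z_t)$ has codimension $\geq 2$, so a general curve through the identity (whose identity fiber is already disjoint from $\pi(Z_t)$ by the choice of projection) avoids this bad locus entirely; hence the whole sweep, and with it every $B_i$, avoids $Z$ over a dense open subset of $T$, giving conclusion (1) at all generic points --- including the isolated ones. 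This argument downstairs in $\bb{P}^n$, where $PGL(n+1)$ does act transitively, is the substitute for the transversality you assumed. (A minor further point: for a one-dimensional non-local $T$, conclusion (2) also requires moving relative to the sweep $pr_1(Z)$ rather than relative to a single closed fiber $Z_0$ as in \ref{moving lemma}, since disjointness from one fiber no longer propagates to all of $Z$.)
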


\begin{remark} The first condition in the conclusion implies  $(Z \cap {\supp(Q^i_*)} )_\eta = \emptyset$ for all $i$, all generic $\eta$. \end{remark}

\begin{proof} Again we are intersecting a finite number of open conditions.  Without loss of generality we may assume $W_k$ is an integral subscheme of dimension $n-a-2$.  Let $pr_1(Z) \into P$ denote the ``sweep" of the family $Z$ (with the reduced structure); this is a subscheme of dimension $\leq a+1$.

Now $pr_1(Z)$ and $W_k$ are not expected to meet, and we have an open dense $U \subset T$ such that $pr_1(Z_U) \cap W_k = \emptyset$.  For a generic finite morphism $\pi : P \to \bb{P}^n$ (as in the proof of $\ref{moving lemma}$) we have, possibly after shrinking $U$, that $\pi(pr_1(Z_U)) \cap \pi(W_k) = \emptyset$; and that the pair $(Z, \pi^* \pi_* (W_k) - W_k)$ has smaller excess intersection than does $(Z,W_k)$.  Now we move $\pi_*(W_k)$ along a general smooth (affine) rational curve $C \into PGL(n+1)$.  Let $\mathscr{Y} \into \bb{P}^n \times C$ note the total space of the resulting family, 

Write $\scr{Y} = \sum m_i Y_i$, and let ${Y_i}^{fl} \into \bb{P}^n \times \bb{P}^1$ denote the flat limit of the family $Y_i \into \bb{P}^n \times C$.  Then $\scr{Y}^{fl} :=\sum m_i {Y_i}^{fl}$ is the unique way to complete $\scr{Y}$ to a family of cycles over $\bb{P}^1$.  Let  $pr_1(\mathscr{Y}^{fl}) \into \bb{P}^n$ be the sweep; this is a subscheme of dimension $n-a-1$.  Choose some $t \in T$ such that $Z_t \cap W_k = \emptyset$.  For a general choice of $C$, since $\dim(Z_t) + \dim(\scr{Y}^{fl}) = a + (n-a-1) < n$, we will have $\pi(pr_1(Z_t)) \cap \mathscr{Y}^{fl} = \emptyset$.  Hence the disjointness holds on an open dense of $T$.  This process can be iterated until we have a cycle $\alpha \sim W_k$ such that $pr_1(Z) \cap \alpha= \emptyset$.

The subvarieties $B_i \into P$ lie in subsets of the form $\pi^{-1}(\mathscr{Y}^{fl})$.  (This follows from the proof that flat pullback preserves rational equivalence \cite[1.7]{Ful}.)  Since a general $\mathscr{Y}^{fl}$ used in one step of the moving process is disjoint from a general member of the family $Z$, this holds after pullback by $\pi$ as well.  \end{proof}

\subsection{Grothendieck-Riemann-Roch}

\begin{lemma} \label{chisubsub}  Let $T$ be the spectrum of a field, and suppose ${\cal{F}}, {\cal{G}} \in \parf(P_T)$ satisfy $\dim(\supp({\cal{F}})) + \dim(\supp({\cal{G}})) < n  = \dim(P)$.  Then $\chi({\cal{F}} \dotimes {\cal{G}}) = 0$.
\end{lemma}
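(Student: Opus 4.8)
The plan is to reduce the statement to a numerical computation via Grothendieck--Riemann--Roch and then kill it on dimension grounds. Write $\pi : P_T = P \to \spec k$ for the structure morphism. Since $T$ is a point, the Euler characteristic is computed in $K_0(\spec k) = \bb{Z}$: as recalled in Section \ref{Kprelim}, $\chi(\cal{F} \dotimes \cal{G}) = \ch(\pi_\ast(\cal{F} \dotimes \cal{G}))$, where $\pi_\ast$ denotes the $K$-theoretic pushforward (alternating sum of higher direct images). Applying Grothendieck--Riemann--Roch \cite[Thm.~15.2]{Ful} to the proper morphism $\pi$ from the smooth variety $P$ to the point, and using $\Td(T_{\spec k}) = 1$, I would rewrite this as
$$ \chi(\cal{F} \dotimes \cal{G}) = \deg \left( \ch(\cal{F} \dotimes \cal{G}) \cdot \Td(P) \right)_0 \in \bb{Z}, $$
where $(-)_0$ denotes the component in $A_0(P)_{\bb{Q}}$ and $\Td(P) = \Td(T_P)$.

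Next I would use that $\ch$ is multiplicative. Because $P$ is smooth, $K_0(P) \cong K^0(P)$ is a commutative ring under $\dotimes$, and $\ch : K_0(P)_{\bb{Q}} \to A_\ast(P)_{\bb{Q}}$ is a ring homomorphism onto the Chow ring with its intersection product; hence $\ch(\cal{F} \dotimes \cal{G}) = \ch(\cal{F}) \cdot \ch(\cal{G})$. It then remains to see that this product vanishes. The key input is the compatibility of the Chern character with the topological (dimension) filtration \cite[Ch.~18]{Ful}: if a class lies in $F_d K_0(P)$, i.e.\ is represented by sheaves supported in dimension $\le d$, then its Chern character lies in $\bigoplus_{i \le d} A_i(P)_{\bb{Q}}$. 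Writing $[\cal{F}] = \sum_j (-1)^j [\cal{H}^j(\cal{F})]$ in $K_0(P)$, each cohomology sheaf is supported on $\supp(\cal{F})$, so $[\cal{F}] \in F_{d_F} K_0(P)$ with $d_F := \dim \supp(\cal{F})$, and likewise $[\cal{G}] \in F_{d_G} K_0(P)$. Therefore $\ch(\cal{F}) \in \bigoplus_{i \le d_F} A_i(P)_{\bb{Q}}$ and $\ch(\cal{G}) \in \bigoplus_{i \le d_G} A_i(P)_{\bb{Q}}$.

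Finally I would run the dimension count. On the smooth $n$-dimensional $P$ the intersection product sends $A_i(P) \otimes A_j(P)$ into $A_{i+j-n}(P)$ (codimensions add). Thus $\ch(\cal{F}) \cdot \ch(\cal{G})$ is concentrated in dimensions $\le d_F + d_G - n$, which is $< 0$ by the hypothesis $d_F + d_G < n$. Since $A_i(P) = 0$ for $i < 0$, we get $\ch(\cal{F}) \cdot \ch(\cal{G}) = 0$, hence $\ch(\cal{F} \dotimes \cal{G}) \cdot \Td(P) = 0$ and \emph{a fortiori} its degree-zero part vanishes, so $\chi(\cal{F} \dotimes \cal{G}) = 0$. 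The only genuinely delicate point is the bookkeeping in the middle step --- that the support-dimension bound transfers to a bound on the dimensions occurring in $\ch$ --- but this is precisely the standard compatibility of $\ch$ with the topological filtration, and everything else is forced by the arithmetic of the intersection product.
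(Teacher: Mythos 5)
Your proof is correct and follows essentially the same route as the paper's: Grothendieck--Riemann--Roch for $\pi : P_T \to \spec k$, multiplicativity of $\ch$, and a dimension count in the Chow ring of the smooth $n$-fold $P$. The only difference is that you quote the compatibility of $\ch$ with the topological filtration as a known fact from \cite{Ful}, whereas the paper proves exactly that point by reducing to structure sheaves of subvarieties (which generate $F_a K_0(P)$ by \cite[Ex.~15.1.5]{Ful}) and applying GRR to the closed immersion $V \hookrightarrow P$ together with the invertibility of the leading term of $\Td(P)$.
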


\begin{proof} Since $F_a(K_0(P))$ is generated by $[{\cal{O}}_V]$, $V \subset P$ a subvariety of dimension $\leq a$ \cite[Ex. 15.1.5]{Ful}\comment{p. 285}, we may assume ${\cal{F}}, {\cal{G}}$ are structure sheaves of subvarieties of dimensions $a,b$ respectively, with $a+b < n$.

\medskip \noi Now since $P$ is smooth, any coherent sheaf has a finite length resolution by finite rank locally free sheaves, so we may apply \cite[18.3.1 (c)]{Ful} to the closed immersion $i : V \to P$ with $\beta = {\cal{O}}_V$.  This gives, in the Chow group $A_\ast(P)$:

$$i_\ast ( \text{ch}({\cal{O}}_V) \cap \text{Td}(V) ) = \text{ch}(i_\ast {\cal{O}}_V) \cap\text{Td}(P) .$$

\medskip \noi As $\text{ch}({\cal{O}}_V) = 1$ and $ \text{Td}(V) = [V] + r_V$ with $r_V \in A_{< a}(V)$, the left hand side lies in $A_{\leq a}(P)$.  

\medskip \noi Since $P$ is smooth, $A^p(P) \cap A_q(P) \subset A_{q-p}(P)$ by \cite[8.3 (b)]{Ful}\comment{p.140}.  As $\text{Td}(P) = [P] + r_P$ with $r_P \in A_{<n}(P)$, by equating terms in each degree, we find $\text{ch}(i_\ast {\cal{O}}_V) \in A^{\geq n-a}(P)$.  

\medskip \noi By Grothendieck-Riemann-Roch (for the smooth $P$, as in \cite[15.2.1]{Ful}) and the action of ch on $\otimes$, $\chi ({\cal{F}} \dotimes {\cal{G}}) = \int_P \text{ch}({\cal{F}}) \cdot \text{ch}({\cal{G}}) \cdot \text{Td}_P$.  Here $\cdot$ means intersection product of cycle classes.  The first possible nonzero term in $\text{ch}({\cal{F}}) \cdot \text{ch}({\cal{G}})$ would come from $\text{ch}_{n-a}({\cal{F}}) \cdot \text{ch}_{n-b}({\cal{G}})$, but this term is zero for degree reasons.
\end{proof}

\begin{proposition} \label{GRR move} Let $T$ be the spectrum of a field, and $Z \into P_T$ an $a$-dimensional subscheme.  Let $M,N \in \coh(P_T)$ be invertible sheaves on some subvariety of $P_T$ of dimension $\leq n-a$, and suppose we have exact sequences:
$$0 \to M \xrightarrow{s} N \to Q_s \to 0$$
$$0 \to M \xrightarrow{t}  N \to Q_t \to 0 $$
such that $Z \cap \supp(Q_s) = Z \cap \supp(Q_t) = \emptyset$.

Then the unique $a_Z \in \Gamma(T, \cal{O}^*_T)$ making the following diagram commute:
$$\xym{ f_T(\cal{O}_Z,Q_s) \ar[r]^-{\varphi^Z_T}  \ar[d]_-{\text{via }s} & \cal{O}_T \ar[dd]^-{a_Z} \\
{(f_T(\cal{O}_Z, M))}^{-1} \otimes f_T (\cal{O}_Z,N) \\
f_T(\cal{O}_Z,Q_t) \ar[u]^-{\text{via }t} \ar[r]^-{\varphi^Z_T} & \cal{O}_T \\ }$$
depends only on $[Z]$, i.e., $a_Z = a_{Z'}$ if $[Z] = [Z']$.

\end{proposition}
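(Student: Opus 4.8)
The plan is to give $a_Z$ a concrete description as the determinant of an automorphism, reduce by additivity to a lower‑dimensional vanishing, and then close with Grothendieck–Riemann–Roch ($\ref{chisubsub}$). Write $T = \spec K$ with $K$ a field. First I would observe that, since $Z \cap \supp(Q_s) = Z \cap \supp(Q_t) = \emptyset$, the complexes $\cal{O}_Z \dotimes Q_s$ and $\cal{O}_Z \dotimes Q_t$ are acyclic, so the maps $\cal{O}_Z \dotimes s$ and $\cal{O}_Z \dotimes t$ are quasi-isomorphisms $\cal{O}_Z \dotimes M \to \cal{O}_Z \dotimes N$ (their cones are $\cal{O}_Z \dotimes Q_s$, resp.\ $\cal{O}_Z \dotimes Q_t$). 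The arrows labelled ``via $s$'', ``via $t$'' are the determinant isomorphisms coming from additivity applied to these two triangles, and the two copies of $\varphi^Z_T$ are the trivializations of the acyclic $\cal{O}_Z \dotimes Q_\ast$. Chasing the defining square then identifies $a_Z$ with the determinant of the automorphism $(\cal{O}_Z \dotimes t)^{-1} \circ (\cal{O}_Z \dotimes s)$ of $\textbf{R}\pi_\ast(\cal{O}_Z \dotimes M)$. Writing $\sigma := s/t$, a rational section of $N \otimes M^{-1}$ that is regular and invertible on the open $U := P_T \setminus (\supp Q_s \cup \supp Q_t)$ containing $\supp(Z)$, this automorphism is multiplication by $\sigma$, so $a_Z = \det \textbf{R}\pi_\ast(\mu_\sigma \mid \cal{O}_Z \dotimes M)$.

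Next I would exploit additivity. The rule $\cal{F} \mapsto \det \textbf{R}\pi_\ast(\mu_\sigma \mid \cal{F} \dotimes M)$ is defined for every coherent $\cal{F}$ on $P_T$ whose support is a closed subset contained in $U$ (hence proper over $K$ and disjoint from the zero loci of $s,t$), and it is multiplicative on short exact sequences, because the determinant functor is additive on triangles and $\mu_\sigma$ is compatible with the induced triangles. Thus it descends to a homomorphism $a \colon K_0(\coh_U(P_T)) \to \Gamma(T, \cal{O}^\ast_T)$ with $a_Z = a([\cal{O}_Z])$ (note $\supp Z \subset U$). Because $[Z] = [Z']$, the classes $[\cal{O}_Z]$ and $[\cal{O}_{Z'}]$ have the same image in $F_a K_0 / F_{a-1} K_0$ (the cycle associated to $\cal{O}_Z$ is its fundamental cycle $[Z]$), so $[\cal{O}_Z] - [\cal{O}_{Z'}] \in F_{a-1} K_0(\coh_U(P_T))$. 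Since this group is generated by the classes $[\cal{O}_V]$ of subvarieties $V \subset U$ with $\dim V \leq a-1$, it suffices to prove $a(\cal{O}_V) = 1$ for every such $V$.

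For the vanishing I would combine a norm computation with $\ref{chisubsub}$. As $V \cap B$ is proper over $K$, the function $\sigma$ restricts to a unit of the finite $K$-algebra $H^0(V \cap B, \cal{O})$; decomposing along the connected components $\mf{m}$ of $V \cap B$ yields $a(\cal{O}_V) = \prod_{\mf{m}} N_{\kappa(\mf{m})/K}(\bar\sigma_{\mf{m}})^{\chi_{\mf{m}}}$, where $\chi_{\mf{m}} = \sum_i (-1)^i \ell(\cal{H}^i(\cal{O}_V \dotimes M))$ is the Euler characteristic of $\cal{O}_V \dotimes M$ supported over $\mf{m}$. Since $\dim V + \dim B \leq (a-1)+(n-a) = n-1 < n$, Lemma $\ref{chisubsub}$ gives the global identity $\sum_{\mf{m}} \chi_{\mf{m}} [\kappa(\mf{m}) : K] = \chi(\cal{O}_V \dotimes M) = 0$; when $V \cap B$ is connected this already forces $\chi_{\mf{m}} = 0$ and hence $a(\cal{O}_V) = 1$. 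In general one refines this: the localized Chern character $\ch^{P_T}_{V \cap B}(\cal{O}_V \dotimes M)$ of \cite[Ch.~18]{Ful} is supported in dimensions $\leq \dim V + \dim B - n \leq -1$, hence is $0$, so each component contributes $\chi_{\mf{m}} = 0$; thus $a(\cal{O}_V) = 1$ and therefore $a_Z = a_{Z'}$.

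The main obstacle is precisely this last point: passing from the single global identity $\chi(\cal{O}_V \dotimes M) = 0$, immediate from $\ref{chisubsub}$, to the componentwise vanishing $\chi_{\mf{m}} = 0$. When $V$ and $B$ meet improperly the global Euler characteristic is too coarse, since the distinct norm factors $N_{\kappa(\mf{m})/K}(\bar\sigma_{\mf{m}})$ do not combine, and one must either localize Riemann–Roch as above or instead arrange $V \cap B$ to be proper (hence empty, for dimension reasons) by a moving argument in the spirit of $\ref{moving lemma 2}$. I expect the sign and orientation bookkeeping in the identification $a_Z = \det \textbf{R}\pi_\ast(\mu_\sigma)$ to be routine, and this improper-intersection analysis to carry the real content.
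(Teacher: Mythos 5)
Your first two steps are sound and parallel the paper's own Step 1: identifying $a_Z$ with $\det \textbf{R}\pi_\ast$ of multiplication by $\sigma = t^{-1}s$ on $\cal{O}_Z \dotimes M$ is a routine consequence of the Knudsen--Mumford compatibilities, and the d\'evissage through the topological filtration correctly reduces everything to showing $a(\cal{O}_V)=1$ for subvarieties $V$ of dimension $\leq a-1$ contained in $U$. The gap is in the step you yourself flag as carrying the real content, and it is genuine: the claim that $\ch^{P_T}_{V\cap B}(\cal{O}_V\dotimes M)$ ``is supported in dimensions $\leq \dim V + \dim B - n$, hence is $0$'' is not a formal property of the localized Chern character of \cite[Ch.~18]{Ful}. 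A priori that class has components in all dimensions up to $\dim(V\cap B)$ (here $B$ is the subvariety carrying $M,N$), which in the improper case is strictly larger than the expected dimension; the vanishing of the intermediate components is precisely a localized form of Serre's vanishing theorem. It is true in this equal-characteristic setting --- for instance by reduction to the diagonal at the level of Riemann--Roch classes, $\sum_i (-1)^i \tau(\Tor_i(\cal{O}_V, M)) = \Td(N_\Delta)^{-1}\cap \Delta^{!} \tau(\cal{O}_V \boxtimes M)$, where $\Delta^{!}$ lowers dimension by exactly $n$, or by Gillet--Soul\'e's Adams-operation filtration --- but this is a substantive theorem that you neither prove nor accurately cite, and as you yourself note, the global identity $\chi=0$ of \ref{chisubsub} cannot substitute for it when $V \cap B$ is disconnected or positive-dimensional. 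Your fallback moving argument is only gestured at; it can be made to work, but only as an induction on $\dim V$ (move $V$ off $B$ by chains avoiding $\supp Q_s \cup \supp Q_t$, as the dimension count permits, and use the inductive vanishing on classes of dimension $\dim V - 1$ to replace each $Q^i_\ast$ by its cycle $[Q^i_\ast]$); without that structure the move threatens to compare $a(\cal{O}_V)$ with $a(\cal{O}_{V'})$ using exactly the kind of statement being proved.

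The paper is engineered to avoid any such localized statement. After the same d\'evissage, it varies the section rather than the subvariety: it forms the universal cokernel $Q_U$ over $\bb{P}(\Hom_{\cal{O}_P}(M,N)\setminus 0)$, proves by GRR on $P \times \bb{P}^1$ --- using only the coarse, global degree vanishing, the same mechanism as \ref{chisubsub} --- that $\det \textbf{R}\pi_\ast(p_1^\ast \cal{F} \dotimes Q_U)$ is a trivial line bundle on that projective space, uses $\chi(\cal{F}\dotimes M) = \chi(\cal{F}\dotimes N) = 0$ to make $s' \mapsto f(s')$ descend to the projectivization, and concludes $f(s) = f(t)$ because projective space admits no nonconstant regular functions valued in $\text{Isom}(f(\cal{F},M), f(\cal{F},N))$. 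So where your argument requires a componentwise intersection-multiplicity vanishing, the paper needs only the rigidity of projective space; as written, your proof does not close without importing that additional theorem.
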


\begin{proof} To prove the claim it is equivalent to show that the difference between $f(1 \otimes s), f(1 \otimes t) : f_T(\cal{O}_Z,M) \cong f_T(\cal{O}_Z,N)$ depends only on $[Z]$.

\textbf{Step 1.}  By taking a filtration of $\cal{O}_Z$ such that the graded pieces are isomorphic to (twists of) structure sheaves of subvarieties, and using the additivity of the determinant on filtrations, we are reduced to showing that if $\cal{F} \in \coh(P_T)$ with $\supp(\cal{F}) \subset \supp(Z)$ and $\dim(\supp(\cal{F})) \leq a-1$, the induced isomorphisms $f(1\otimes s), f(1 \otimes t) : f(\cal{F}, M) \cong f(\cal{F}, N)$ are equal.  (Such a filtration exists by \cite[I.7.4]{H}\comment{[H] Ch. 1 Prop 7.4 + tilde is exact [H] Ch.2 p.120ish}.)  The subquotients of the filtration of $\cal{O}_Z$ depend on the filtration chosen, but the top-dimensional components always appear with their correct multiplicities, i.e., the cycle $[Z]$ can be extracted from the filtration.

\textbf{Step 2.} Let $Q_U$ denote the cokernel of the universal $\cal{O}_P$-homomorphism $M \to N$; note $Q_U$ is flat over $\Hom_{\cal{O}_P}(M,N) \setminus 0$ since a morphism between invertible sheaves is either injective or zero, hence the Euler characteristic of every cokernel is $\chi(M) - \chi(N)$.  We consider the line bundle $\det \textbf{R} \pi_\ast (p^*_1 (\cal{F}) \dotimes Q_U)$ on $\Hom_{\cal{O}_P}(M,N) \setminus 0$.  Its fiber over $s \in \Hom_{\cal{O}_P}(M,N)$ is precisely $f(\cal{F}, Q_s)$.  Since $Q_s = Q_{\lambda s}$ for $\lambda \in \Gamma(T, \cal{O}^*_T)$, we consider $\det \textbf{R} \pi_\ast (p^*_1(\cal{F}) \dotimes Q_U)$ as a line bundle on the projective space $\bb{P}(\Hom_{\cal{O}_P}(M,N) \setminus 0)$.

We claim this line bundle is trivial.  To prove this it suffices to show it is trivial along a line $\bb{P}^1 \cong L \into \bb{P}(\Hom_{\cal{O}_P}(M,N) \setminus 0)$.  For this purpose Grothendieck-Riemann-Roch (i.e., ignoring torsion) is adequate.  More precisely, we consider the GRR diagram:

$$\xym{
K_0(P \times L) \ar[d]^-{\textbf{R} \pi_\ast} \ar[rrr]^-{\ch(-) \cdot \Td(P) \cdot \Td(L)} &&& {A_*(P \times L)}_\bb{Q} \ar[d]^-{\pi_\ast} \\
K_0(L) \ar[rrr]^-{\ch(-) \cdot \Td(L)} \ar[d]^-\det &&& {A_*(L)}_\bb{Q} \\
\pic(L) \ar[urrr]^-{c_1} \\ }$$

We have $\dim(\supp(p^*_1(\cal{F}))) \leq a, \dim(\supp(Q_U)) \leq n-a$, and $\dim(P \times L) = n+1$.  Hence $\ch (p^*_1 (\cal{F})) \cdot \ch (Q_U) = 0$ in ${A_*(P \times L)}_\bb{Q}$ for degree reasons (as in the proof of $\ref{chisubsub}$), so $p^*_1 (\cal{F}) \dotimes Q_U \in K_0(P \times L)$ maps to $0$ in the top row.  Hence $c_1(\det \textbf{R} \pi_\ast (p^*_1(\cal{F}) \dotimes Q_U))$ is a torsion class, and therefore $\det \textbf{R} \pi_\ast (p^*_1(\cal{F}) \dotimes Q_U)$ is trivial.

\textbf{Step 3.} For $s \in \Hom_{\cal{O}_P}(M,N) \setminus 0$, consider the induced identification $f(s): f(\cal{F},Q_s) \otimes f(\cal{F}, M) \cong f(\cal{F},N)$.  Since $f(\cal{F},Q_s)$ is canonically trivial, i.e., the trivialization induced by $\supp (\cal{F}) \cap \supp(Q_s) = \emptyset$ extends over all $\Hom_{\cal{O}_P}(M,N) \setminus 0$, we consider $f(s)$ as an isomorphism $f(\cal{F},M) \cong f(\cal{F},N)$.  Since $\chi (\cal{F} \dotimes M) = \chi (\cal{F} \dotimes N) = 0$ by $\ref{chisubsub}$, we have $f(s) = f(\lambda s)$ for $\lambda \in \Gamma(T,\cal{O}^*_T)$.  Therefore we have a commutative diagram:

$$\xym{ \Hom(M,N) \setminus 0 \ar[d] \ar[r]^-{s \mapsto f(s)} & \text{Isom}(f(\cal{F},M), f(\cal{F},N)) \\
\bb{P}(\Hom(M,N) \setminus 0) \ar[ru] \\ }$$

But there are no nonconstant functions on $\bb{P}(\Hom(M,N) \setminus 0)$, hence $f(s) = f(t) : f(\cal{F},M) \cong f(\cal{F},N)$.

\end{proof}

\subsection{Further properties of $D_{Z,W}$.}

Assuming the base and incidence are optimal, we can already prove the following important property of $D_{Z,W}$.

\begin{proposition} \label{varphi proper HC} Let $(Z,Z',W)$ be a Hilbert-Chow datum over a normal base $T$, and suppose the incidence $Z \cap W$ satisfies:
\begin{enumerate}
\item $(Z \cap W)_\eta  = \emptyset$ for all generic points $\eta \in T$; and
\item $Z \cap W$ is finite over all points of depth 1 in $T$.
\end{enumerate}
Then the Cartier divisors $D_{Z,W}, D_{Z',W}$ are equal. \end{proposition}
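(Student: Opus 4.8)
The plan is to prove the equality $D_{Z,W}=D_{Z',W}$ on a normal base $T$ by reducing the question to the coefficient of each prime divisor (i.e.\ each point of depth $1$) and computing that coefficient via the Serre Tor-formula for intersection multiplicities. Since $T$ is normal and both divisors agree on generic points (by the canonical trivialization $\varphi_\eta^{Z_\eta}$ of Lemma~\ref{varphi disjoint}, using hypothesis (1)), it suffices to check that the two Weil divisors have the same coefficient along every codimension-one point $\xi\in T$. By the property of $\Div(\cdot)$ recorded after the determinant discussion, the coefficient of $\overline{\{\xi\}}$ in $D_{Z,W}$ is the alternating sum of lengths $\sum_i(-1)^i\ell_\xi(\cal{H}^i(\textbf{R}\pi_\ast(\cal{O}_Z\dotimes\cal{O}_W)))$, provided the relevant complex is acyclic in depth $0$ at $\xi$.

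The key step is to identify this alternating sum of lengths with a geometric intersection number depending only on the cycle $[Z]$ (and $[W]=[W']$). First I would localize $T$ at $\xi$, so that $T=\spec R$ with $R$ a one-dimensional regular local ring (the localization of a normal scheme at a depth-$1$ point). Hypothesis (2) guarantees that $Z\cap W$ is finite over $\xi$, so the pushforward $\textbf{R}\pi_\ast(\cal{O}_Z\dotimes\cal{O}_W)$ has cohomology supported at the closed point, and the alternating length equals $\chi$ of the derived tensor product computed fiberwise, i.e.\ the length of $\bigoplus_i(-1)^i\Tor_i^{\cal{O}_{P_R}}(\cal{O}_Z,\cal{O}_W)$ at the finitely many points of the closed fiber lying in the incidence. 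This is precisely Serre's intersection multiplicity, which by definition depends only on the cycles $[Z_0],[W_0]$ on the special fiber, and hence only on $[Z]$ once $[W]$ is fixed. Since $[Z]=[Z']$ as cycles on $P_T$, their special fibers carry the same cycle class, so the Tor-length at $\xi$ for $(Z,W)$ equals that for $(Z',W)$.

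The main obstacle will be handling the bookkeeping at $\xi$ carefully: I must ensure the derived pushforward genuinely computes the Serre multiplicity fiber-by-fiber (which requires the finiteness of hypothesis (2) to collapse the higher direct images into lengths at the closed point), and I must verify that the cycle $[Z]$ in the sense defined in the paper — top-dimensional components with geometric multiplicities — is exactly the invariant read off by the Tor-formula. Concretely, I would filter $\cal{O}_Z$ by subquotients that are structure sheaves of (twists of) subvarieties, as in the additivity of $D_{\cal{F},W}$ established in Lemma~\ref{varphi proper H, D adds}, so that lower-dimensional subquotients contribute nothing to the length at $\xi$ (their support cannot meet $W$ in the expected dimension) and only the top-dimensional part survives; this isolates the dependence on $[Z]$ alone.

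Finally, having shown the coefficients agree at every depth-$1$ point and that both divisors agree generically, normality of $T$ lets me conclude $D_{Z,W}=D_{Z',W}$ as Cartier divisors: on a normal Noetherian scheme a Cartier divisor is determined by its associated Weil divisor, and a Weil divisor is determined by its coefficients along codimension-one points. This pointwise-in-codimension-one comparison, combined with the generic agreement, yields the desired equality.
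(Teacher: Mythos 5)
Your overall strategy is the same as the paper's: agree at generic points, reduce to the coefficient at each depth-one point, localize to a DVR, use hypothesis (2) to identify that coefficient with an alternating sum of Tor-lengths, then filter $\cal{O}_Z$ to isolate the dependence on the cycle $[Z]$. But there is a genuine gap at the decisive step. Your claim that this sum ``is precisely Serre's intersection multiplicity, which by definition depends only on the cycles'' is circular: Serre's formula is defined for sheaves (scheme structures), and the entire content of the proposition is that the alternating Tor-length computed from $\cal{O}_Z$ depends only on $[Z]$, i.e.\ is insensitive to embedded and lower-dimensional structure. You then propose the filtration, but you justify discarding the lower-dimensional subquotients by the parenthetical ``their support cannot meet $W$ in the expected dimension.'' Impropriety of an intersection does not by itself kill its contribution: a lower-dimensional subvariety $Y \subset Z$ can certainly meet $W$, the individual sheaves $\Tor_i(\cal{O}_Y,\cal{O}_W)$ are then nonzero of finite length, and what vanishes is only the alternating sum of those lengths. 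That vanishing is Serre's vanishing theorem at components of improper intersection, \cite[V.C.Thm.1(a)]{Ser} (equal characteristic) --- a substantial theorem, not a dimension count. The paper's proof splits into exactly the two cases your parenthetical conflates: $Y \cap W = \emptyset$, where acyclicity gives the vanishing trivially, and $Y \cap W \neq \emptyset$, where Serre's theorem must be invoked. Without this ingredient your argument does not close.

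Two smaller inaccuracies in the same direction: first, the multiplicity is not determined by the special fiber cycles $[Z_0],[W_0]$ --- on the special fiber $\dim Z_0 + \dim W_0 = a+b = n-1 < n$, so there is no well-defined intersection number there; the Tor-lengths live on the total space $P_T$, and the invariant you need is $[Z]$ as a cycle on $P_T$, which is exactly what the Hilbert--Chow datum hypothesis $[Z]=[Z']$ supplies. Second, the filtration of $\cal{O}_Z$ from \cite[I.7.4]{H} has subquotients that are \emph{twists} $L_i$ of structure sheaves of subvarieties, not the structure sheaves $\cal{O}_{Z_i}$ themselves; to conclude $\gamma(L_i) = \gamma(\cal{O}_{Z_i})$ one compares them by an injection whose cokernel has lower-dimensional support --- which again requires the improper-intersection vanishing you omitted.
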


\begin{proof} Since the smooth locus of $T$ contains all points of depth 1, and the formation of $D_{Z, W}$ is compatible with the inclusion $T^{sm} \subset T$, we may assume $T$ is smooth.  For $T$ regular there is a canonical isomorphism \cite[Prop.~8]{KM}:

$$f_T(\cal{O}_Z, \cal{O}_W) \cong \otimes_{p,q} {(\text{det}_T R^q \pi_\ast (\cal{H}^p(\cal{O}_Z \dotimes \cal{O}_W)))}^{{(-1)}^{p+q}}.$$

To calculate the coefficient of a depth 1 point $t \in T$ in $D_{Z,W}$, we may replace $T$ with the spectrum of the DVR $\cal{O}_{T,t}$.  Then the support of $\cal{H}^p(\cal{O}_Z \dotimes \cal{O}_W)$ is finite over $T$ (indeed, over $t$), so in the displayed expression only the terms with $q=0$ can contribute.  

By \cite[Thm.3(vi)]{KM}, the multiplicity of a depth one point is determined by the sum $\sum_i {(-1)}^i \ell_t( \cal{H}^i(\textbf{R} \pi_\ast (\cal{O}_Z \dotimes \cal{O}_W)))$.  This last sum is equal to
$$\sum_i {(-1)}^i \ell_t( \pi_\ast \cal{H}^i(\cal{O}_Z \dotimes \cal{O}_W) )= (\deg \pi) ( \sum_{p, t' \to t} {(-1)}^p \ell_{t'} (  \cal{H}^p(\cal{O}_Z \dotimes \cal{O}_W)  )) ,$$
hence it suffices to show $\gamma(\cal{O}_Z) := \sum_{p, t' \to t} {(-1)}^p \ell_{t'} ( \cal{H}^p(\cal{O}_Z \dotimes \cal{O}_W ))$ depends only the underlying cycle $[Z]$.  We remark that if $Z$ and $W$ are integral and $b=n-a-1$, then the contribution of a point $t'$ (lying over $t$) to $\gamma(\cal{O}_Z)$ is exactly Serre's Tor-formula for the intersection index of $Z$ and $W$ at $t'$ \cite[V.C.Thm.1(b)]{Ser}.

Without loss of generality we assume $W$ is integral and $\dim(W) =b=n-a-1$; we will see in the proof all sums are 0 if $b < n-a-1$.  Given an exact sequence $0 \to \cal{F}_1 \to \cal{F}_2 \to \cal{F}_3 \to 0$ of coherent sheaves on $P_T$ with support of relative dimension $\leq a$ and satisfying the incidence hypothesis with respect to $W$, by the long exact cohomology sequence we obtain $\gamma (\cal{F}_1) + \gamma (\cal{F}_3) = \gamma(\cal{F}_2)$.  It then follows $\gamma$ is additive on filtrations.

Write $[Z] = \sum_i a_i Z_i$.  Again by \cite[I.7.4]{H}\comment{[H] Ch. 1 Prop 7.4 + tilde is exact [H] Ch.2 p.120ish}, the sheaf $\cal{O}_Z$ admits a filtration whose subquotients are invertible sheaves $L_i$ on subvarieties contained in $Z$; and each top-dimensional component $Z_i$ appears exactly $a_i$ times.  Since the invertible sheaves are twists by an ample class on $P_T$, we may assume there is either an injective map $\cal{O}_{Z_i} \to L_i$ or an injective map $L_i \to \cal{O}_{Z_i}$.  Therefore $\gamma(\cal{O}_Z) = \sum_i \gamma(L_i) = \sum_i a_i \gamma (\cal{O}_{Z_i})$ modulo summands of the form $\gamma(\cal{F})$ where $\cal{F}$ is a sheaf on $P_T$ whose support over the generic point of $T$ has dimension $\leq a-1$.  So it suffices to show $\gamma$ vanishes on sheaves of this type.  Again we may assume $\cal{F}$ is isomorphic to the structure sheaf of a subvariety $Y \into Z$ in $P_T$.  Note that $\dim Y \leq a$, else, being contained in $Z$, $Y$ would dominate $T$ and have all fibers of dimension $\geq a$; and then $Y$ would contribute to the cycle $[Z]$ of $Z$.

There are two cases to consider: if $Y \cap W = \emptyset$, then $\gamma(\cal{F}) =0$ since $\cal{F} \dotimes \cal{O}_W$ is acyclic.  If $Y \cap W \neq \emptyset$, then the intersection $Y \cap W$ is improper, and the Tor-formula vanishes at components of improper intersection (due to Serre in the equal characteristic case \cite[V.C.Thm.1(a)]{Ser}).
\end{proof}

\begin{remarks}
\subtheorem The proof shows the relation between the incidence line bundle and Serre's Tor-formula for intersection multiplicities; and also that our construction agrees with Mazur's at least on the normalization of the locus $U'$.  Thus the essential tasks are to extend the divisor through the locus where the expected incidence condition (the hypothesis in $\ref{varphi proper HC}$) fails, and to remove the assumption of normality.
\subtheorem The assumption (C2) of \cite{BK} is that the incidence is generically finite over, and nowhere dense in, its image in the base $S$.  These assumptions imply the map $S \to \scr{C}_a(P) \times \scr{C}_{n-a-1}(P)$ factors through $U'$.
\subtheorem Considering the GRR diagram as in the proof of $\ref{GRR move}$ with $L$ replaced by a general regular base $T$, one sees that the first Chern class of the incidence line bundle $f_T(\cal{O}_Z, \cal{O}_W)$ modulo torsion depends only on the underlying cycles $[Z], [W]$, independent of any assumption of properness of intersection.  By contrast in $\ref{varphi proper HC}$ we have the result integrally.
 \subtheorem We may write $D_{[Z],W}$ in the case we have a Hilbert datum as in $\ref{varphi proper HC}$, e.g., with proper intersection over a regular base $T$.
\end{remarks}

\begin{corollary} \label{phi proper} Among Hilbert-Chow data:
\begin{enumerate}
\item over normal bases $T$; and
\item such that $Z,W$ (and hence $Z',W$) are generically disjoint and have finite incidence over points of depth 1 in $T$;
\end{enumerate}
there exists a collection of isomorphisms $\phi_T^{Z,Z'} : f_T(\cal{O}_Z, \cal{O}_W) \cong f_T(\cal{O}_{Z'}, \cal{O}_W)$ which:
\begin{enumerate}
\item is compatible with base change preserving the incidence condition (2);
\item satisfies the cocycle condition; and
\item agrees with the collection on disjoint families defined in $\ref{phi disjoint}$.
\end{enumerate}
 \end{corollary}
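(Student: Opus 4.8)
The plan is to define $\phi_T^{Z,Z'} := {(\varphi_T^{Z'})}^{-1} \circ \varphi_T^Z$, exactly as in the disjoint case \ref{phi disjoint}, but now with $\varphi_T^Z$ and $\varphi_T^{Z'}$ the canonical isomorphisms produced by Lemma \ref{varphi proper H, D adds}. The only point that must be settled before this formula even makes sense is that the two target line bundles coincide: $\varphi_T^Z$ lands in $\cal{O}_T(D_{Z,W})$ and $\varphi_T^{Z'}$ in $\cal{O}_T(D_{Z',W})$, so one needs $D_{Z,W} = D_{Z',W}$. This is precisely Proposition \ref{varphi proper HC}, whose hypotheses (normal base, generic disjointness, and finiteness of $Z \cap W$ over depth-$1$ points) are exactly conditions (1)--(2) imposed on the Hilbert-Chow data in the corollary. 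Hence $\phi_T^{Z,Z'}$ is a well-defined isomorphism $f_T(\cal{O}_Z, \cal{O}_W) \cong \cal{O}_T(D_{Z,W}) = \cal{O}_T(D_{Z',W}) \cong f_T(\cal{O}_{Z'}, \cal{O}_W)$.

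The remaining verifications are formal. For base change I observe that each $\varphi_T^Z$ is compatible with any base change $S \to T$ preserving the generic disjointness, by the final assertion of Lemma \ref{varphi proper H, D adds}; since $\phi_T^{Z,Z'}$ is built from these, it inherits the compatibility. A base change preserving condition (2) in particular preserves the generic disjointness, so this applies (and normality plus preservation of the depth-$1$ finiteness guarantee that $D_{Z,W}=D_{Z',W}$ persists over $S$, so the pulled-back and freshly-defined $\phi$ agree). For the cocycle condition the telescoping computation of \ref{phi disjoint} applies verbatim:
\[ \phi_T^{Z',Z''} \circ \phi_T^{Z,Z'} = {(\varphi_T^{Z''})}^{-1} \circ \varphi_T^{Z'} \circ {(\varphi_T^{Z'})}^{-1} \circ \varphi_T^Z = {(\varphi_T^{Z''})}^{-1} \circ \varphi_T^Z = \phi_T^{Z,Z''}. \]

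To check agreement with the disjoint collection of \ref{phi disjoint} on their common domain, I use that when $Z \cap W = \emptyset$ Lemma \ref{varphi proper H, D adds} gives $D_{Z,W} = 0$ and identifies $\varphi_T^Z$ with the canonical trivialization of \ref{varphi disjoint}; thus the two definitions of $\phi_T^{Z,Z'}$ literally coincide. Alternatively, and more robustly since the base $T$ is reduced, an isomorphism of invertible sheaves is determined by its restriction to the generic points of $T$, where disjointness holds by condition (1); there both collections restrict to the same composite of trivializations, so they agree.

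Since the genuinely geometric input — the equality of divisors via Serre's Tor-formula and the vanishing at improper components — has already been carried out in \ref{varphi proper HC}, I do not expect a real obstacle in this corollary. The one point demanding care is purely organizational: confirming that conditions (1)--(2) match the hypotheses of \ref{varphi proper HC} so that $D_{Z,W}=D_{Z',W}$, and hence that $\phi_T^{Z,Z'}$ is defined as a map between the correct pair of bundles rather than between distinct twists.
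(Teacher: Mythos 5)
Your proposal is correct and matches the paper's own proof, which is exactly the one-line definition $\phi_T^{Z,Z'} := {(\varphi_T^{Z'})}^{-1} \circ \varphi_T^Z : f_T(\cal{O}_Z, \cal{O}_W) \cong \cal{O}_T(D_{[Z],W}) \cong f_T(\cal{O}_{Z'}, \cal{O}_W)$, resting on Proposition \ref{varphi proper HC} to identify $D_{Z,W} = D_{Z',W}$. The paper leaves the base-change, cocycle, and compatibility-with-\ref{phi disjoint} verifications implicit; you spell them out, correctly, by the same formal arguments the paper's construction presupposes.
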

\begin{proof}
We define $\phi_T^{Z,Z'} := {(\varphi_T^{Z'})}^{-1} \circ \varphi_T^Z : f_T(\cal{O}_Z, \cal{O}_W) \cong \cal{O}_T(D_{[Z],W}) \cong f_T(\cal{O}_{Z'}, \cal{O}_W)$.  \end{proof}

\begin{construction-notation}\label{general D} We continue with the Cartier divisor $D_{Z,W} \into T$ associated to a Hilbert datum ($\ref{varphi proper H, D adds}$).  For $Z \into P_T$ a $T$-flat family of $a$-dimensional subschemes of $P$ and $s \in \bb{Z}_{\geq 0} $, let $\coh_{\leq s; Z_T}(P)$ denote the abelian category of coherent sheaves $\cal{G}$ on $P$ such that $\dim (\supp(\cal{G})) \leq s$ and $(Z \cap \supp(\cal{G}))_\eta = \emptyset$ for all generic points $\eta \in T$.  For $\cal{G} \in \coh_{\leq n-a-1; Z_T}(P)$, we obtain a Cartier divisor $D_{Z, \cal{G}} \into T$ and a canonical isomorphism $f_T (\cal{O}_Z, \cal{G}) \cong \cal{O}_T (D_{Z, \cal{G}})$.

We denote by $K_0^{s; Z}(P)$ the $K_0$-group of the abelian category $\coh_{\leq s; Z_T}(P)$: we take the free abelian group on sheaves in $\coh_{\leq s; Z_T}(P)$, then impose relations from short exact sequences whose terms all lie in $\coh_{\leq s; Z_T}(P)$.  Let $\cdiv(T)$ denote the group of Cartier divisors on $T$.
\end{construction-notation}

We summarize some elementary properties of this construction in the following proposition.

\begin{proposition} \label{D elem} \begin{enumerate}
\item The map $D_{Z, -}: \coh_{\leq n-a-1; Z_T}(P) \to \cdiv(T)$ defined by $\cal{G} \mapsto D_{Z, \cal{G}}$ descends to a homomorphism $K_0^{n-a-1; Z}(P) \to \cdiv(T)$ (which we also denote by $D_{Z, -}$).  
\item If $f: S \to T$ is a morphism and $\cal{G} \in \coh_{\leq s; Z_S}(P)$, then $f^*(D_{Z, \cal{G}}) = D_{Z, \cal{G}}$ as divisors on $S$.
\item If $\cal{G} \in \coh_{\leq n-a-1; Z_T}(P)$ satisfies $Z \cap \cal{G} = \emptyset$, then $D_{Z, \cal{G}} = 0$.
\item If $\cal{G} \in  \coh_{\leq 0; Z_T}(P)$ and the divisorial part of the family $Z \into P_T$ is trivial (i.e., $a \leq \dim(P) -2$), then $D_{Z, \cal{G}} = 0$.
\end{enumerate}
\end{proposition}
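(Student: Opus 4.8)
The parts (1)--(3) are immediate from what precedes. For (1), a short exact sequence $0 \to \cal{G}_1 \to \cal{G}_2 \to \cal{G}_3 \to 0$ in $\coh_{\leq n-a-1; Z_T}(P)$ is in particular a distinguished triangle in $\parf(P_T)$ all of whose terms satisfy the generic disjointness from $Z$, so the additivity of $D_{Z,-}$ in the second variable ($\ref{varphi proper H, D adds}$) gives $D_{Z,\cal{G}_2} = D_{Z,\cal{G}_1} + D_{Z,\cal{G}_3}$. As these are exactly the defining relations of $K_0^{n-a-1;Z}(P)$, the assignment $\cal{G} \mapsto D_{Z,\cal{G}}$ factors through a homomorphism out of $K_0^{n-a-1;Z}(P)$. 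For (2), this is the base-change compatibility of the associated Cartier divisor: the construction commutes with any base change along which the complex stays acyclic in depth $0$ (\cite[Thm.3(v)]{KM}), and the hypothesis $\cal{G} \in \coh_{\leq s; Z_S}(P)$ is precisely this condition over $S$; the compatibility was already recorded in $\ref{varphi proper H, D adds}$. For (3), $\supp(Z) \cap \supp(\cal{G}) = \emptyset$ forces $\cal{O}_Z \dotimes \cal{G}$, hence $\textbf{R}\pi_\ast(\cal{O}_Z \dotimes \cal{G})$, to be acyclic, so its associated divisor vanishes, as in $\ref{varphi disjoint}$.

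The substance is (4). First, using (1) and a filtration of $\cal{G}$ by skyscrapers $\cal{O}_p$ (each of which inherits the generic disjointness from $\cal{G}$), I would reduce to the case $\cal{G} = \cal{O}_p$, the structure sheaf of a closed point $p \in P$; since this is the constant family $\{p\} \times_k T$, the moving lemmas apply. Next I would exploit the seminormality of $T$: the bundle $f_T(\cal{O}_Z,\cal{G})$ carries the canonical rational section $\varphi$, a trivialization over the dense open locus of disjointness, and by $\ref{pic dvrs}$ (ultimately $\ref{char sn rings}$) it suffices to produce, for each morphism $f : \spec R \to T$ from a complete DVR, a compatible trivialization of $f^\ast f_T(\cal{O}_Z,\cal{G})$; equivalently, to show the pullback $D_{Z_R,\cal{G}} = f^\ast D_{Z,\cal{G}}$ vanishes. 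These local trivializations then glue to a global one, i.e.\ $D_{Z,\cal{G}} = 0$.

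On such a DVR the base has dimension $\leq 1$, and the hypothesis $a \leq n-2$ reads $b = \dim\supp(\cal{G}) = 0 \leq n-a-2$, so $\ref{moving lemma 2}$ applies: it produces curves $B_i \subset P$, invertible sheaves $M_i$, and the two presentations $0 \to M_i \to \cal{O}_{B_i} \to Q^i_0 \to 0$ and $0 \to M_i \to \cal{O}_{B_i} \to Q^i_\infty \to 0$ with $(Z \cap B_i)_\eta = \emptyset$ (so that $M_i, \cal{O}_{B_i}, Q^i_\ast$ all lie in $\coh_{\leq n-a-1; Z_R}(P)$, using $1 \leq n-a-1$) and with the $0$-cycle $[\cal{O}_p] + \sum_i([Q^i_0] - [Q^i_\infty])$ disjoint from $Z$. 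The two presentations give $[Q^i_0] = [\cal{O}_{B_i}] - [M_i] = [Q^i_\infty]$ in $K_0^{n-a-1;Z}(P)$, so by (1) we have $D_{Z,Q^i_0} = D_{Z,Q^i_\infty}$; hence $D_{Z,\cal{O}_p} = D_{Z,\,[\cal{O}_p] + \sum_i([Q^i_0]-[Q^i_\infty])}$, which vanishes by (3) since that cycle is disjoint from $Z$. Thus $D_{Z_R,\cal{G}} = 0$, as needed.

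I expect the DVR step of (4) to be the main obstacle: one must move the $0$-dimensional $\cal{G}$ off $Z$ through subvarieties $B_i$ that are themselves generically disjoint from $Z$ (not merely their torsion cokernels), which is exactly why $\ref{moving lemma 2}$ is required and why, since it is valid only over a base of dimension $\leq 1$, the seminormal descent of $\ref{pic dvrs}$ must first be invoked to reduce to a DVR. Alternatively, on the DVR one can bypass the moving lemma by the depth-one coefficient computation in the proof of $\ref{varphi proper HC}$: since $b < n-a-1$, every intersection of a subvariety of $Z$ with $\{p\}$ is improper, so Serre's Tor-formula vanishes term by term and the coefficient is $0$.
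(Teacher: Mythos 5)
Your treatment of (1)--(3) is fine and matches the paper: these follow from the additivity, base-change compatibility, and acyclic-complex compatibility already recorded in $\ref{varphi proper H, D adds}$ and Section $\ref{Kprelim}$. For (4), however, the paper does something entirely different from what you propose. After using (1) to reduce to $\cal{G} = \cal{O}_W$ for $W$ a zero-dimensional subvariety, it invokes the determinant formula of \cite[5.3]{ZD},
$$\text{det}_T \textbf{R} \pi_\ast (\cal{O}_Z \dotimes \cal{G}) \cong {(\text{det}_T \pi_\ast \cal{G})}^{\text{rk}(\cal{O}_Z) -1} \otimes \text{det}_T (\pi_\ast (\text{det}_{P_T}(\cal{O}_Z) \arrowvert_W )),$$
and observes that when the divisorial part of $Z$ is empty, $\det_{P_T}(\cal{O}_Z)$ is canonically trivial and $\text{rk}(\cal{O}_Z)-1 = -1$, so the right-hand side is canonically trivial. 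This works over an arbitrary base $T$ in one stroke: no reduction to DVRs, no moving lemma, no seminormality. (This reliance on the zero-cycles-and-divisors theory of \cite{ZD} is exactly what the remark following the proposition flags.) Your route instead anticipates the paper's \emph{later} results: your DVR computation is precisely the $b=0$ instance of the induction step of $\ref{1dim base no cartier}$ (where the inductive hypothesis is vacuous), and the paper derives $\ref{1dim base no cartier}$ \emph{from} (4), not the other way around --- so your restructuring is not circular, but it must then reprove the descent that the paper establishes afterwards.

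That descent step is where your argument has a genuine gap: the word ``equivalently'' in your reduction via $\ref{pic dvrs}$. Showing $f^\ast D_{Z,\cal{G}} = 0$ for every complete DVR $f : \spec R \to T$ is \emph{not} equivalent to producing the data $\ref{pic dvrs}$ requires. That data includes a single identification of the fiber at every field-valued point, with which every DVR identification must be compatible; vanishing of the pulled-back divisors only gives you, DVR by DVR, an extension of the canonical section $\varphi$, and two DVRs whose closed points hit the same point of the incidence locus may extend $\varphi$ to \emph{different} elements of that fiber. This really happens on seminormal bases: on the nodal cubic $y^2 = x^2(x+1)$, the rational function $y/x$ defines a nonzero Cartier divisor supported at the node whose pullback along every DVR mapping to the curve vanishes (any such map covering a generization factors through the normalization, where $y/x$ becomes the unit $t$), yet the extensions along the two branches take the values $1$ and $-1$ at the node and do not glue. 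So ``pullback vanishes on all DVRs'' does not imply ``$D=0$'' on a seminormal base. This is exactly why the paper states $\ref{1dim base no cartier}$ for arbitrary bases of dimension $\leq 1$ --- so that it applies to multi-branch curves obtained by gluing spectra of DVRs along closed points --- and then proves the seminormal case by a careful normalization/branch-gluing descent rather than by quoting $\ref{pic dvrs}$; and it is also why a DVR-only version of (4) would not suffice downstream, since (4) is used as the base case of that induction over such non-normal one-dimensional bases. Your alternative ending via Serre's Tor-formula (through $\ref{varphi proper HC}$) is sound over normal bases, but it runs into the same gap when you try to pass to general seminormal $T$.
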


\begin{proof} The first three properties follow immediately from the additivity, compatibility with base change, and compatibility with the trivialization of an acyclic complex, of the associated divisor construction discussed in Section $\ref{Kprelim}$.  To prove the last property, by the first property we may assume $\cal{G}$ is the structure sheaf of a zero-dimensional subvariety $W \into P$.  (If $k = \overline{k}$, this is just a single closed point, but we give an argument here valid for any $T$-flat family $W \into P_T$ of zero-dimensional subschemes, such that $W$ is integral.)

By \cite[5.3]{ZD} there is a canonical isomorphism:
$$\text{det}_T \textbf{R} \pi_\ast (\cal{O}_Z \dotimes \cal{G}) \cong {(\text{det}_T \pi_\ast \cal{G})}^{\text{rk}(\cal{O}_Z) -1} \otimes \text{det}_T (\pi_\ast (\text{det}_{P_T}(\cal{O}_Z) \arrowvert_W )).$$
Since the divisorial part of $\cal{O}_Z$ was assumed to be empty, the line bundle $\det_{P_T} (\cal{O}_Z)$ is canonically trivial; and $\text{rk}(\cal{O}_Z) -1 = -1$.  Therefore the right hand side is canonically trivial, so $D_{Z, \cal{G}} = 0$.
\end{proof}

\begin{remark} In case $a = \dim(P)-1$, we refer to \cite{ZD}.  The reduction to the diagonal shows we may assume $\dim(P)=1$ or $a \leq \dim(P)-2$; but to the extent we rely on (4) in $\ref{D elem}$, we must understand the case of zero-cycles and divisors. \end{remark}

\begin{proposition} \label{1dim base no cartier} Suppose $T$ has dimension $\leq 1$ and $\cal{G} \in \coh_{\leq n-a-2; Z_T}(P)$.  Then $D_{Z, \cal{G}} = 0$. \end{proposition}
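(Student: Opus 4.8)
The plan is to prove, by ascending induction on $s = 0, 1, \ldots, n-a-2$, the statement $P(s)$: for every $\cal{G} \in \coh_{\leq s; Z_T}(P)$ one has $D_{Z,\cal{G}} = 0$; the proposition is then $P(n-a-2)$. The base case $P(0)$ is exactly \ref{D elem}(4): since $\coh_{\leq n-a-2; Z_T}(P)$ is the relevant category we have $a \leq n-2$, so the divisorial part of $Z$ is trivial, and a zero-dimensional $\cal{G}$ contributes nothing. For the inductive step I would first use the homomorphism property \ref{D elem}(1) together with the filtration of \cite[I.7.4]{H} to write $D_{Z,\cal{G}}$ as a sum of terms $D_{Z,L}$, where each $L$ is an invertible sheaf on an integral subvariety $W \into P$ with $(Z \cap W)_\eta = \emptyset$; the subquotients of dimension $< s$ drop out by the inductive hypothesis $P(s-1)$. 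Comparing $L$ with $\cal{O}_W$ after twisting by a sufficiently ample line bundle (so that $[L]$ and $[\cal{O}_W]$ differ in $K_0$ only by classes supported in dimension $< s$), $P(s-1)$ reduces us further to showing $D_{Z,\cal{O}_W} = 0$ for $W$ an integral subvariety of dimension exactly $s \leq n-a-2$.

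The heart of the argument is then an application of the moving lemma \ref{moving lemma 2}, which applies precisely because $\dim T \leq 1$ and $b := \dim W = s \leq n-a-2$. It produces subvarieties $B_i$ of dimension $s+1 \leq n-a-1$, line bundles $M_i \in \pic(B_i)$, and the two exact sequences relating $M_i, \cal{O}_{B_i}, Q^i_0, Q^i_\infty$, with $(Z \cap B_i)_\eta = \emptyset$. Since $M_i$ and $\cal{O}_{B_i}$ therefore lie in $\coh_{\leq n-a-1; Z_T}(P)$, the homomorphism property \ref{D elem}(1) applied to the two sequences gives $D_{Z, Q^i_0} = D_{Z,\cal{O}_{B_i}} - D_{Z,M_i} = D_{Z, Q^i_\infty}$ at the level of sheaves. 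Moreover $P(s-1)$ shows the sheaf-level divisor $D_{Z,Q^i_*}$ agrees with the cycle-level divisor $D_{Z,[Q^i_*]}$: a filtration of $Q^i_*$ as above has top pieces assembling into the $s$-dimensional cycle $[Q^i_*]$ and lower pieces of dimension $< s$, on which $D_{Z,-}$ vanishes by induction. Hence $D_{Z,[Q^i_0]} = D_{Z,[Q^i_\infty]}$ as cycle-level divisors as well.

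To conclude I would invoke conclusion (2) of \ref{moving lemma 2}: the cycle $\alpha = [W] + \sum_i([Q^i_0] - [Q^i_\infty])$ is globally disjoint from $Z$, so $f_T(\cal{O}_Z, \alpha)$ is canonically trivial and $D_{Z,\alpha} = 0$ by \ref{varphi disjoint}. By additivity of $f_T(\cal{O}_Z, -)$ on cycles (and hence of the associated divisor, \ref{varphi proper H, D adds}), and using $D_{Z,[W]} = D_{Z,\cal{O}_W}$, one gets $0 = D_{Z,\alpha} = D_{Z,\cal{O}_W} + \sum_i (D_{Z,[Q^i_0]} - D_{Z,[Q^i_\infty]}) = D_{Z,\cal{O}_W}$, the last equality because the $Q$-terms cancel by the previous paragraph. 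This closes the induction.

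The main obstacle, and the reason the argument must be organized as an induction, is the discrepancy between the sheaf-level divisor $D_{Z,Q^i_*}$ (which is what the exact sequences control) and the cycle-level divisor $D_{Z,[Q^i_*]}$ (which is what the disjointness of $\alpha$ controls): these agree only modulo contributions from components of dimension $< s$, and it is exactly these lower-dimensional contributions that the inductive hypothesis is needed to annihilate. Condition (1) of \ref{moving lemma 2}, asserting disjointness of the entire $B_i$ from the generic fibre of $Z$ rather than merely of $\supp(Q^i_*)$, is what guarantees that every divisor appearing above (in particular $D_{Z,M_i}$ and $D_{Z,\cal{O}_{B_i}}$) is defined, so that the bookkeeping goes through.
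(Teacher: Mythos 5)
Your proposal is correct and follows essentially the same route as the paper's own proof: induction on the dimension of the support with base case \ref{D elem}(4), reduction to structure sheaves of subvarieties, an application of the moving lemma \ref{moving lemma 2}, the observation that the two short exact sequences force $D_{Z,Q^i_0} = D_{Z,Q^i_\infty}$, the use of the inductive hypothesis to pass between the sheaf-level classes $Q^i_*$ and the cycle-level classes $[Q^i_*]$ (whose difference lies in $F_{b-1}(K_0(P))$), and the final cancellation against the disjointness of the moved cycle. Your extra care in the reduction step (the ample twist comparing $L$ with $\cal{O}_W$) fills in a detail the paper compresses into ``these sheaves generate the $K_0$-group,'' but it is not a different argument.
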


\begin{remark} The condition $D_{Z, \cal{G}} = 0$ is equivalent to the canonical trivialization (induced by the generic acyclicity of $\textbf{R} \pi_\ast (\cal{O}_Z \dotimes \cal{G})$) extending over all of $T$. \end{remark}

\begin{proof} We may also assume $\cal{G}$ is the structure sheaf of a subvariety $W \into P$ of dimension $b \leq n-a-2$, as these sheaves generate the $K_0$-group.

Since $D_{Z, \cal{G}} = 0$ for $\dim( \supp (\cal{G})) \leq 0$ (again by $\ref{D elem}$), it suffices to prove the following claim: if $D_{Z, \cal{G}} = 0$ for all $\cal{G} \in \coh_{\leq b-1; Z}(P)$, and $1 \leq b \leq n-a-2$, then $D_{Z, \cal{G}} = 0$ for all $\cal{G} \in \coh_{\leq b; Z}(P)$.

We prove this claim: by $\ref{moving lemma 2}$ there are short exact sequences:
$$0 \to M_i \xrightarrow{s^i_0} \cal{O}_{B_i} \to Q^i_0 \to 0$$
$$0 \to M_i \xrightarrow{s^i_\infty} \cal{O}_{B_i} \to Q^i_\infty \to 0$$
in $\coh_{\leq n-a-1; Z_T}(P)$, such that the $b$-dimensional cycle $W + \sum_i ([Q^i_0] - [Q^i_\infty])$ is disjoint from $Z$.

If $[Q] = \sum_j a_j W_j$, then we write $D_{Z, [Q]} := \sum_j a_j D_{Z, W_j}$.  The short exact sequences give $\sum_i D_{Z, Q^i_0} = \sum_i D_{Z, Q^i_\infty}$ in $K_0^{n-a-1; Z}(P)$.  Furthermore the difference $Q^i_* - [Q^i_*]$ lies in $F_{b-1}(K_0(P))$.

But then

\sss

$D_{Z, W} = D_{Z, W} + \sum_i (D_{Z, Q^i_0} - D_{Z, Q^i_\infty} ) $ from the SES

\sss

$= D_{Z,W} + \sum_i (D_{Z, [Q^i_0]} - D_{Z, [Q^i_\infty]})$ since we assumed $D_{Z,-}$ vanishes on $F_{b-1}(K_0(P))$

\sss

$= D_{Z, W + \sum_i ([Q^i_0] -  [Q^i_\infty])} = 0$ by the disjointness from $Z$.
\end{proof}

\begin{corollary} Suppose $T$ is normal and $\cal{G} \in \coh_{\leq n-a-2; Z_T}(P)$.  Then $D_{Z, \cal{G}} = 0$. \end{corollary}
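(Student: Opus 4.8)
The plan is to reduce to the one-dimensional case already settled in \ref{1dim base no cartier}, exploiting the fact that on a normal scheme a Cartier divisor is detected by its coefficients at the points of depth $1$. Indeed, since $T$ is normal, the natural map $\cdiv(T) \to Z^1(T)$ to Weil divisors is injective: a Cartier divisor given locally by invertible rational functions $f_i$ whose orders $v_t(f_i)$ all vanish at the depth-$1$ points $t$ must be trivial, because at each such $t$ the local ring $\mathcal{O}_{T,t}$ is a DVR and $\mathcal{O}_T = \bigcap_t \mathcal{O}_{T,t}$ forces each $f_i$ to be a unit. Hence it suffices to show that the coefficient of $D_{Z,\mathcal{G}}$ at every depth-$1$ point $t \in T$ vanishes.

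To compute this coefficient I would localize at $t$. Let $S = \spec \mathcal{O}_{T,t}$, with its canonical (flat) morphism $f \colon S \to T$; since $T$ is normal and $t$ has depth $1$, the ring $\mathcal{O}_{T,t}$ is a DVR, so $\dim S \leq 1$. The generic point of $S$ maps to a generic point of $T$, so the sheaf $\mathcal{G}$ on $P$ still lies in $\coh_{\leq n-a-2; Z_S}(P)$: both the bound $\dim(\supp(\mathcal{G})) \leq n-a-2$ and the generic disjointness $(Z \cap \supp(\mathcal{G}))_\eta = \emptyset$ are inherited. By the base-change compatibility of the associated-divisor construction (\ref{D elem}(2)) we then have $f^*(D_{Z,\mathcal{G}}) = D_{Z_S,\mathcal{G}}$ as divisors on $S$.

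Now \ref{1dim base no cartier}, applied to the one-dimensional base $S$, yields $D_{Z_S,\mathcal{G}} = 0$. Therefore $f^*D_{Z,\mathcal{G}} = 0$, i.e.\ the restriction of $D_{Z,\mathcal{G}}$ to $\spec \mathcal{O}_{T,t}$ is trivial, so its coefficient at $t$ is zero. As $t$ was an arbitrary depth-$1$ point, the injectivity recorded in the first paragraph gives $D_{Z,\mathcal{G}} = 0$.

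I do not expect a serious obstacle here: the content is exactly the passage from bases of dimension $\leq 1$ to normal bases, and the statement of \ref{D elem}(2) is tailored to permit the localization step. The only two points requiring genuine care are (i) that localizing at a depth-$1$ point preserves membership in $\coh_{\leq n-a-2; Z_S}(P)$, which holds precisely because the generic point of $\mathcal{O}_{T,t}$ is a generic point of $T$, and (ii) the standard injectivity of Cartier divisors into Weil divisors on a normal scheme, which is what licenses replacing the global vanishing by a pointwise check in codimension one.
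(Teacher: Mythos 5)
Your proof is correct and follows exactly the paper's argument: localize at each depth-$1$ point $t$ (where $\mathcal{O}_{T,t}$ is a DVR by normality), use base-change compatibility of $D_{Z,\mathcal{G}}$ together with Proposition \ref{1dim base no cartier} to kill the restriction, and conclude via the fact that a Cartier divisor on a normal scheme is determined by its restrictions to depth-$1$ points. The paper's proof is a three-sentence version of the same reasoning; your write-up merely makes the inheritance of the hypotheses under localization and the Cartier-to-Weil injectivity explicit.
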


\begin{proof} For $t \in T$ of depth 1, the formation of $D_{Z, \cal{G}}$ is compatible with the morphism $g_t: \spec \cal{O}_{T,t} \to T$.  By $\ref{1dim base no cartier}$, we have $g_t^*(D_{Z, \cal{G}}) = 0$ for all such $t$.  Since $D_{Z, \cal{G}}$ is determined its restriction to points of depth 1, the result follows. \end{proof} 

\begin{proposition} Suppose $T$ is seminormal and $\cal{G} \in \coh_{\leq n-a-2; Z_T}(P)$.  Then $D_{Z, \cal{G}} = 0$. \end{proposition}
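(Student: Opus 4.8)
The plan is to deduce this from the one-dimensional base case \ref{1dim base no cartier} together with the seminormal-descent machinery of Section \ref{secSN}. Since the assertion $D_{Z,\cal{G}} = 0$ is local on $T$, I would first reduce to the case $T = \spec A$ with $A$ a seminormal Noetherian ring. Let $U \subseteq T$ be the open locus on which $\supp(Z)$ and $\supp(\cal{G})$ are disjoint; by the definition of $\coh_{\leq n-a-2; Z_T}(P)$ we have $(Z \cap \supp(\cal{G}))_\eta = \emptyset$ at every generic point $\eta$, so $U$ is open and dense and $Y := T \setminus U$ contains no generic point of $T$. Over $U$ the complex $\textbf{R}\pi_\ast(\cal{O}_Z \dotimes \cal{G})$ is acyclic, so \ref{varphi disjoint} furnishes a canonical trivialization of $\cal{L} := \cal{O}_T(D_{Z,\cal{G}}) \cong f_T(\cal{O}_Z, \cal{G})$ on $U$, equivalently the canonical section $s_D$ is nonvanishing there. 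The whole content of the proposition is that this canonical trivialization extends over all of $T$; this is exactly the statement $D_{Z,\cal{G}}=0$.

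To produce the extension I would invoke \ref{pic dvrs}: an isomorphism $\cal{L} \cong \cal{O}_T$ restricting to the canonical trivialization on $U$ is equivalent to a family of fiber identifications $\beta_f \colon f^\ast \cal{L} \cong \cal{O}_{\spec R}$, one for each $\spec R \xrightarrow{f} T$ with $R$ a field or complete DVR, compatible with restriction to closed and generic points. For $R$ a field $\spec R$ is a point and $\beta_f$ is canonical; for $R$ a DVR the base $\spec R$ has dimension $\leq 1$. The crucial observation is that we are free to restrict attention to those $R$ whose generic point maps into $U$: since $Y$ contains no generic points, every point of $Y$ generizes into $U$, so the hypotheses of \ref{dense specializations} are met (with $X = T$ and $Y$ as above), and that lemma (applied, as in the proof of \ref{pic dvrs}, to the pointwise function determined by $s_D$ relative to a local trivialization) reduces the DVR-condition to specializations within $U$ and from $U$ to $Y$.

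For such an $R$ the generic point lies in $U$, so $(Z_R, \cal{G}_R)$ is again a Hilbert datum with generically empty incidence, and $\cal{G}_R \in \coh_{\leq n-a-2; Z_R}(P)$. Since $\dim \spec R \leq 1$, Proposition \ref{1dim base no cartier} gives $f^\ast D_{Z,\cal{G}} = D_{Z_R, \cal{G}_R} = 0$ (using the base-change compatibility \ref{D elem}(2)), so the canonical trivialization extends over $\spec R$; this is the desired $\beta_f$. Its compatibility with restriction to the closed and generic points of $\spec R$ is immediate from the base-change compatibility of the acyclic trivialization in \ref{varphi disjoint}. Feeding these $\beta_f$ back into \ref{pic dvrs} (via \ref{char sn rings}) yields an isomorphism $\cal{L} \cong \cal{O}_T$ that restricts to the canonical trivialization on the dense open $U$; as $T$ is reduced this forces $s_D$ to be a nowhere-vanishing section, i.e., $D_{Z,\cal{G}} = 0$.

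The step I expect to be the main obstacle is the honest justification that only DVRs whose generic point lands in $U$ must be checked, i.e., the interface between \ref{pic dvrs} and \ref{dense specializations}. One cannot define $\beta_f$ canonically when the generic point of $\spec R$ lies in the incidence $Y$, since there $\cal{L}$ has no preferred trivialization; the role of \ref{dense specializations} is precisely to show that the pointwise function given by the inverse of the local equation of $D_{Z,\cal{G}}$ on $U$, extended across $Y$ using the good DVRs, nevertheless varies algebraically along all DVRs, so that \ref{char sn rings} applies. Care is also needed to track that the resulting global trivialization is the canonical one, so that we conclude $D_{Z,\cal{G}}=0$ and not merely that $\cal{L}$ is abstractly trivial.
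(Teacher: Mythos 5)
Your overall strategy (reduce to one-dimensional bases via the seminormality machinery, with \ref{1dim base no cartier} as the engine) is the right one, and you have correctly located the weak point --- but the proof as written has a genuine gap exactly there, and it is not a routine one. To feed \ref{dense specializations} and \ref{char sn rings} you must first have a well-defined pointwise function on all of $T$: at a point $y$ of the incidence locus $Y$ you define its value by choosing \emph{one} DVR $f \colon \spec R \to T$ whose generic point lands in $U$ and whose closed point hits $y$, extending the canonical trivialization over $\spec R$ by \ref{1dim base no cartier}, and restricting to the closed fiber. But the hypothesis of \ref{dense specializations} concerns \emph{every} DVR covering a specialization from $U$ to $Y$, so you need the restricted value at $y$ to be independent of which DVR you chose; if $T$ has several branches at $y$ (which seminormal schemes certainly allow, e.g.\ a node), generizations through different branches could a priori produce different units at $y$. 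Nothing in \ref{varphi disjoint} or \ref{D elem}(2) rules this out: those give compatibility of each single $\beta_f$ with its own closed and generic points, not agreement between different $\beta_f$'s over the same closed point. This well-definedness is the entire content of the proposition beyond the normal case, and your proposal leaves it unproved.

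The paper closes this gap with a gluing trick your argument is missing, and it is the reason \ref{1dim base no cartier} is stated for arbitrary bases of dimension $\leq 1$ rather than just for DVRs. The paper first pulls back along the normalization $\nu \colon T^\nu \to T$, where the preceding corollary (the normal case) already gives a global trivialization $\varphi_{T^\nu}$ extending the canonical one; the issue becomes whether $\varphi_{T^\nu}$ is constant along the fibers of $\nu$. Given $t \in T$ with branches $b_1, \ldots, b_r$, one chooses DVRs $R_i$ through the branches, each with residue field $\kappa(t)$ and covering a generization into the disjoint locus, and glues the $\spec R_i$ along $\spec \kappa(t)$ to form a single one-dimensional \emph{seminormal, non-normal} scheme $S$. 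Applying \ref{1dim base no cartier} to $S$ itself (not to the individual $R_i$) gives $g^*(D_{Z,\cal{G}}) = 0$ on $S$, i.e.\ a trivialization over the glued scheme, and this is what forces the values coming from the different branches to agree at $t$; compatibility of $D_{Z,\cal{G}}$ with the cartesian square formed by $S$, $S^\nu$, $T^\nu$, $T$ then shows $\varphi_{T^\nu}$ is constant on fibers of $\nu$ and so descends. If you add this glued-DVR comparison (your ``good'' DVRs through a common point must be compared over their union glued at the closed point), your argument becomes correct and is essentially the paper's.
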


\begin{proof} The formation of the divisor $D_{Z, \cal{G}}$ is compatible with the (finite, birational) normalization morphism $\nu : T^\nu \to T$.  By the previous result we know local equations for $\nu^*(D_{Z, \cal{G}})$ are units.  We need to show these units are constant along the fibers of $\nu$.  Suppose $t \in T$ has branches $b_1, \ldots, b_r$ in $T^\nu$ .  For each $b_i$ there exists a DVR $R_i$ and a morphism $g_i : \spec R_i \to T$ such that: $R_i$ has residue field $\kappa(t)$, and $g_i$ covers a generization of $t$ to the locus of subschemes disjoint from $\supp(\cal{G})$.  Denote by $S$ the union of the $\spec R_i$s glued along $\spec \kappa(t)$.  Then we have a morphism $g: S \to T$, and by $\ref{1dim base no cartier}$  we conclude $g^*(D_{Z, \cal{G}}) = 0$.  The corresponding trivialization $f_S(\cal{O}_Z, \cal{G}) \cong \cal{O}_S$ is our candidate for extending the trivialization through $t$. 

Let $\text{Exc}(\nu) \into T$ denote the locus over which $\nu$ is not an isomorphism, and let $I_{\cal{G}} \into T$ denote the locus of subschemes $Z$ such that $Z \cap \supp(\cal{G}) \neq \emptyset$.  Set $U := T - (\text{Exc}(\nu) \cap I_{\cal{G}} )$.  Then we have a trivialization $\varphi_U: f_U(\cal{O}_Z, \cal{G}) \cong \cal{O}_U$, and this extends to $\varphi_{T^\nu} : f_{T^\nu}(\cal{O}_Z, \cal{G}) \cong \cal{O}_{T^\nu}$.  For $t \not \in U$, we constructed in the previous paragraph the isomorphism $t_S: f_S(\cal{O}_Z, \cal{G}) \cong \cal{O}_S$, which we may restrict to $t$.  Together these define a pointwise trivialization $f_T(\cal{O}_Z, \cal{G}) \cong \cal{O}_T$, i.e., a nonzero element in every fiber of the line bundle $f_T(\cal{O}_Z, \cal{G})$.  We form the cartesian diagram:
$$\xym{S^\nu \ar[r]^-{g^\nu} \ar[d]_-{\nu |_S} & T^\nu \ar[d]^-\nu \\ S \ar[r]^-g & T \\}$$
Since the formation of $D_{Z, \cal{G}}$ is compatible with all of the morphisms appearing in this diagram, we obtain ${(\nu |_S)}^*(t_S) = {(g^\nu)}^*(\varphi_{T^\nu})$.  It then follows $\varphi_{T^\nu}$ is constant along the fibers of $\nu$, hence it descends to a trivialization $\varphi_T : f_T(\cal{O}_Z, \cal{G}) \cong \cal{O}_T$. 
\end{proof}

\begin{corollary} Suppose $T$ is seminormal and $\cal{G} \in \coh_{\leq n-a-1; Z_T}(P)$.  Then $D_{Z, \cal{G}} = D_{Z, [\cal{G}]}$. \end{corollary}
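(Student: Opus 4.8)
The plan is to deduce the equality from the additivity of $D_{Z,-}$ together with the vanishing established in the Proposition immediately preceding this corollary. First I would invoke $\ref{D elem}(1)$, which shows that $D_{Z,-}$ factors through a homomorphism $K_0^{n-a-1;Z}(P) \to \cdiv(T)$; thus it suffices to compare the classes of $\cal{G}$ and of its cycle $[\cal{G}] = \sum_i a_i [W_i]$ in $K_0^{n-a-1;Z}(P)$, where the $W_i$ are the top-dimensional components of $\supp(\cal{G})$ and $a_i$ is the length of $\cal{G}$ at the generic point of $W_i$. (If $\dim \supp(\cal{G}) \leq n-a-2$ then both sides vanish by the preceding Proposition, so I may assume $\dim \supp(\cal{G}) = n-a-1$.)

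Next, following the filtration technique used in the proof of $\ref{varphi proper HC}$, I would apply \cite[I.7.4]{H} to write $\cal{G}$ as an iterated extension whose subquotients are invertible sheaves on subvarieties of $\supp(\cal{G})$. Since every such subvariety is contained in $\supp(\cal{G})$, it remains generically disjoint from $Z$, so all subquotients lie in $\coh_{\leq n-a-1; Z_T}(P)$ and the filtration computes the class of $\cal{G}$ in $K_0^{n-a-1;Z}(P)$. Each top-dimensional component $W_i$ occurs among the subquotients exactly $a_i$ times, carrying some invertible sheaf $L$, while all remaining subquotients are supported in dimension $\leq n-a-2$ and hence lie in $\coh_{\leq n-a-2; Z_T}(P)$.

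To replace each top-dimensional twist $L$ on $W_i$ by $\cal{O}_{W_i}$, I would twist by a sufficiently positive ample class on $P_T$ to produce an injection $\cal{O}_{W_i} \into L$ (or $L \into \cal{O}_{W_i}$), exactly as in $\ref{varphi proper HC}$; its cokernel is supported on a proper closed subset of $W_i$, so it lies in $\coh_{\leq n-a-2; Z_T}(P)$ and gives $[L] = [\cal{O}_{W_i}]$ in $K_0^{n-a-1;Z}(P)$ modulo the image of $K_0^{n-a-2;Z}(P)$. Collecting these identities, in $K_0^{n-a-1;Z}(P)$ we obtain
$$\cal{G} = \sum_i a_i [\cal{O}_{W_i}] + \delta,$$
where $\delta$ comes from $K_0^{n-a-2;Z}(P)$. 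Applying $D_{Z,-}$ and using the preceding Proposition, which gives $D_{Z,\cal{F}} = 0$ for every $\cal{F} \in \coh_{\leq n-a-2; Z_T}(P)$ when $T$ is seminormal, the contribution $D_{Z,\delta}$ vanishes, and therefore $D_{Z,\cal{G}} = \sum_i a_i D_{Z,\cal{O}_{W_i}} = D_{Z,[\cal{G}]}$.

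I expect no genuine obstacle here: the real content is already contained in the seminormal vanishing result on $\coh_{\leq n-a-2; Z_T}(P)$, and the present statement is a formal consequence of additivity in $K$-theory. The only point requiring attention is the bookkeeping that guarantees each error term---both the lower-dimensional subquotients and the cokernels comparing $L$ with $\cal{O}_{W_i}$---lands in $\coh_{\leq n-a-2; Z_T}(P)$; this is automatic because all of these sheaves are supported on subvarieties of $\supp(\cal{G})$, which is generically disjoint from $Z$ by hypothesis.
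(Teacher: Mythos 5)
Your proof is correct and takes essentially the same route the paper intends: the corollary is stated without proof precisely because it is the formal consequence of the additivity of $D_{Z,-}$ through $K_0^{n-a-1;Z}(P)$ ($\ref{D elem}$), the filtration with invertible-sheaf subquotients and the ample-twist comparison with $\cal{O}_{W_i}$ (exactly as deployed in $\ref{varphi proper HC}$), and the seminormal vanishing on $\coh_{\leq n-a-2; Z_T}(P)$ from the immediately preceding proposition. Your bookkeeping that all filtration levels, subquotients, and cokernels stay inside $\coh_{\leq n-a-1; Z_T}(P)$ (being supported in $\supp(\cal{G})$) is the right point to check, and you have checked it.
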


\begin{remark} For $T$ smooth and $\cal{G} \in \coh_{\leq n-a-2; Z_T}(P)$, one can deduce the line bundle $f_T (\cal{O}_Z, \cal{G})$ is trivial as follows.  The filtration of the $K_0(X)$-group by dimension of support is compatible with multiplication, if $X$ is a smooth quasi-projective scheme over a field \cite[Exp.0 Ch.2 Sect.4 Thm.2.12 Cor.1]{SGA6}.  From $\cal{O}_Z \in F_{a+ \dim T}(K_0(P_T))$ and $\cal{G} \in F_{n-a-2 + \dim T}(K_0(P_T))$ it follows that $\cal{O}_Z \dotimes \cal{G} \in F_{\dim T -2} (K_0(P_T))$.  Therefore $\textbf{R} \pi_\ast (\cal{O}_Z \dotimes \cal{G}) \in F_{\dim T -2}(K_0(T))$, and hence $f_T(\cal{O}_Z, \cal{G}) \cong \cal{O}_T$.

For a general base $T$, this reasoning is valid rationally, hence we can conclude $f_T(\cal{O}_Z, \cal{G}) $ is a torsion line bundle.

Since the dimension filtration's compatibility with multiplication can be viewed as a consequence of the moving lemma, in some sense we have given this proof.  Because we need to keep track of the trivialization and not just the abstract invertible sheaf, we work with Cartier divisors rather than line bundles.
\end{remark}

\begin{corollary} \label{cartier}  Let $(Z,W)$ be a Hilbert datum over any base $T$ such that $(Z \cap W)_\eta = \emptyset$ for every generic point $\eta \in T$.  Suppose further that $W = W_k \times_k T$ for a $k$-scheme $W_k$ with $[W_k]=\sum_i a_i W_i$.  Then there is a canonical isomorphism $f_T(\cal{O}_Z, \cal{O}_W) \cong \otimes_i {(f_T(\cal{O}_Z, \cal{O}_{W_i}))}^{\otimes a_i}$. \end{corollary}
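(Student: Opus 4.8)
The plan is to recognize this as a packaging corollary: all the substantive work already lives in the immediately preceding results, and the only task is to translate an equality of Cartier divisors into a canonical isomorphism of line bundles. The first step is to set up the divisor. Since $(Z,W)$ is a Hilbert datum, the base $T$ is seminormal, and since $W = W_k \times_k T$ the sheaf $\cal{O}_W$ is pulled back from the coherent sheaf $\cal{O}_{W_k}$ on $P$, whose support has dimension $b \leq n-a-1$. The hypothesis $(Z \cap W)_\eta = \emptyset$ says exactly that $\cal{O}_{W_k} \in \coh_{\leq n-a-1; Z_T}(P)$, so by $\ref{varphi proper H, D adds}$ (see also $\ref{general D}$) the Cartier divisor $D_{Z, \cal{O}_W}$ is defined together with the canonical isomorphism $f_T(\cal{O}_Z, \cal{O}_W) \cong \cal{O}_T(D_{Z, \cal{O}_W})$ characterized by agreeing with the trivialization $\varphi^{Z_\eta}_\eta$ at every generic point. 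The whole statement thus reduces to the additive equality of divisors $D_{Z, \cal{O}_W} = \sum_i a_i D_{Z, \cal{O}_{W_i}}$.

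I would establish this equality by invoking the preceding corollary: for seminormal $T$ and $\cal{G} \in \coh_{\leq n-a-1; Z_T}(P)$ one has $D_{Z, \cal{G}} = D_{Z, [\cal{G}]}$, so here $D_{Z, \cal{O}_W} = D_{Z, [\cal{O}_W]} = D_{Z, [W_k]}$. Each top-dimensional component satisfies $W_i \times_k T \subset W$, hence $(Z \cap (W_i \times_k T))_\eta \subset (Z \cap W)_\eta = \emptyset$; so each $D_{Z, \cal{O}_{W_i}} = D_{Z, W_i}$ is defined and is the associated divisor of the cycle $W_i$. By the very definition $D_{Z, [W_k]} := \sum_i a_i D_{Z, W_i}$ (and because $D_{Z, -}$ descends to a homomorphism $K_0^{n-a-1; Z}(P) \to \cdiv(T)$ by $\ref{D elem}$), this gives $D_{Z, \cal{O}_W} = \sum_i a_i D_{Z, W_i}$.

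To upgrade this to the asserted isomorphism, I would tensor the canonical isomorphisms $f_T(\cal{O}_Z, \cal{O}_{W_i}) \cong \cal{O}_T(D_{Z, W_i})$ of $\ref{varphi proper H, D adds}$, use the tautological $\cal{O}_T(\sum_i a_i D_{Z, W_i}) \cong \otimes_i \cal{O}_T(D_{Z, W_i})^{\otimes a_i}$, and combine with $f_T(\cal{O}_Z, \cal{O}_W) \cong \cal{O}_T(D_{Z, \cal{O}_W})$ and the displayed equality of divisors, producing $f_T(\cal{O}_Z, \cal{O}_W) \cong \otimes_i f_T(\cal{O}_Z, \cal{O}_{W_i})^{\otimes a_i}$. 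To see this is canonical, I would argue that over the open locus $U_0 \subset T$ on which $Z \cap W = \emptyset$ both $f_T(\cal{O}_Z, \cal{O}_W)$ and each $f_T(\cal{O}_Z, \cal{O}_{W_i})$ acquire the canonical trivialization coming from the acyclicity of $\textbf{R}\pi_\ast(\cal{O}_Z \dotimes -)$, and the constructed isomorphism restricts there to the identity of $\cal{O}_{U_0}$; since $(Z \cap W)_\eta = \emptyset$ at every generic point, $U_0$ contains all generic points of the reduced scheme $T$ and is therefore dense, so this restriction determines the isomorphism.

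I do not anticipate a genuine obstacle: the only nontrivial ingredient, namely the vanishing of $D_{Z, \cal{G}}$ for $\cal{G}$ of support-dimension $\leq n-a-2$ over a seminormal base (proved via the moving lemmas $\ref{moving lemma}, \ref{moving lemma 2}$ and Grothendieck--Riemann--Roch), is already absorbed into the preceding corollary. The sole point requiring care is the precise meaning of ``canonical'': one must confirm that the isomorphism assembled from divisor additivity agrees on the disjoint locus with the product of the canonical trivializations, and this is exactly the additivity clause of $\ref{varphi proper H, D adds}$ applied to the filtration of $\cal{O}_{W_k}$ by invertible sheaves on subvarieties furnished by \cite[I.7.4]{H}.
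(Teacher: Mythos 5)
Your proposal is correct and takes essentially the same approach the paper intends: the paper states this corollary without a written proof precisely because it follows immediately from the preceding corollary $D_{Z,\cal{G}} = D_{Z,[\cal{G}]}$ (valid since a Hilbert datum forces $T$ to be seminormal) combined with the canonical isomorphisms $f_T(\cal{O}_Z,\cal{G}) \cong \cal{O}_T(D_{Z,\cal{G}})$ of Lemma~\ref{varphi proper H, D adds} and the additivity in $\ref{D elem}$, which is exactly the argument you spell out. Your reading of ``canonical'' --- that the isomorphism is pinned down by agreeing, at the generic points of the reduced base $T$, with the trivializations coming from acyclicity on the disjoint locus --- is also the correct one.
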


\subsection{Application to the incidence line bundle}

\begin{construction} The facts $\ref{moving lemma}$ and $\ref{cartier}$ produce a construction; in the description we suppress the base $T$ and we use $f(-) := f(\cal{O}_Z, -)$ as before, since the first factor is constant.  We use the identification of $\ref{cartier}$: $f(\cal{O}_W) \cong \otimes_i {(f(\cal{O}_{W_i}))}^{\otimes a_i} =: f([\cal{O}_W])$.  (In our situation $W$ is a Cartier divisor on a $(b+1)-$dimensional subvariety $B \into P$ with $[W] = \sum_i a_i W_i$.)

\smallskip \noi Now given $(Z,W); B_i, M_i, s^i_{0, \infty}$ as in $\ref{moving lemma}$, let $f(s_*) : f(M) \otimes f(Q_*) \cong f(\cal{O}_B)$ denote the isomorphism induced by the short exact sequence.  We set
$$\alpha := [W]+ \sum_i ([Q^i_0] - [Q^i_\infty])$$
to be the moved $b$-dimensional cycle.  Then we have a canonical isomorphism $f([\cal{O}_W]) \otimes (\otimes_i f([Q^i_0])) = f(\alpha) \otimes (\otimes_i f([Q^i_\infty]))$.

\smallskip \noi Let $\beta^Z_{W,\alpha} : f([\cal{O}_W]) \cong f(\alpha)$ be the unique isomorphism making the following diagram commute:
$$ \xym{ f([\cal{O}_W]) \otimes (\otimes_i (f([Q^i_0]) \otimes f(M_i))) \ar[d]^-{1 \otimes (\otimes_i f(s^i_0))} \ar[r]^-= &   f(\alpha) \otimes (\otimes_i (f([Q^i_\infty]) \otimes f(M_i))) \ar[d]^-{1 \otimes (\otimes_i f(s^i_\infty))}  \\
f([\cal{O}_W]) \otimes (\otimes_i f(\cal{O}_{B_i})) \ar[r]^-{\beta^Z_{W,\alpha}  \otimes 1} & f(\alpha) \otimes (\otimes_i f(\cal{O}_{B_i})) \\ }$$

In the top row, we tensored the canonical isomorphism with the identity on the $f(M_i)$ factors.  
\end{construction}

This construction extends the collection $\{ \phi_T \}$ to spectra of local rings $T$.

\begin{proposition} \label{proper compat moving} Let $(Z,Z',W)$ be a Hilbert-Chow datum over $T$ the spectrum of a local ring with $W = W_k \times_k T$, and suppose $(Z \cap W)_\eta =  (Z' \cap W )_\eta = \emptyset$ for all generic points $\eta \in T$.  Suppose subvarieties $B_1, \ldots , B_n \subset P$ and short exact sequences as in $\ref{moving lemma}$ have been chosen.  Then the isomorphism:
$$ {(\varphi^{Z'; \alpha} \circ \beta^{Z'}_{W,\alpha})}^{-1} \circ (\varphi^{Z;\alpha} \circ \beta^Z_{W,\alpha}) : f_T(\cal{O}_Z, \cal{O}_W) \cong f_T(\cal{O}_Z, \alpha) \cong \cal{O}_T \cong f_T(\cal{O}_{Z'},\alpha) \cong f_T(\cal{O}_{Z'}, \cal{O}_W)$$
is independent of the choice of move: given another collection of data $(\hat{B_i}, \hat{M_i}, \hat{s^i}_{0, \infty})$ producing a $b$-dimensional cycle $\hat{\alpha}$ (also disjoint from $Z$), we have:
$${(\varphi^{Z'; \alpha} \circ \beta^{Z'}_{W,\alpha} )}^{-1} \circ (\varphi^{Z;\alpha} \circ \beta^Z_{W,\alpha}) = {(\varphi^{Z'; \hat{\alpha}} \circ \beta^{Z'}_{W,\hat{\alpha}})}^{-1} \circ ( \varphi^{Z; \hat{\alpha}} \circ \beta^Z_{W,\hat{\alpha}} ).$$

Furthermore ${(\varphi^{Z'; \alpha} \circ \beta^{Z'}_{W,\alpha})}^{-1} \circ  (\varphi^{Z;\alpha} \circ \beta^Z_{W,\alpha})$ agrees with the canonical identifications at every generic $\eta \in T$.  If in addition $T$ is the spectrum of a regular local ring and the incidence $Z \cap W$ satisfies the hypotheses of  $\ref{phi proper}$, then $\phi_T^{Z,Z'} = {(\varphi^{Z'; \alpha} \circ \beta^{Z'}_{W,\alpha})}^{-1} \circ (\varphi^{Z;\alpha} \circ \beta^Z_{W,\alpha}).$
\end{proposition}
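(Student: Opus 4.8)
The plan is to verify all three assertions after restriction to the generic points of $T$. Since the base of any Hilbert datum is seminormal, hence reduced, an isomorphism of invertible sheaves on $T$ is determined by its restrictions to the points $\eta$ of depth $0$ (as noted after \ref{phi disjoint}); thus it suffices to analyze each displayed isomorphism at a fixed generic $\eta$. Write $t^Z_\alpha := \varphi^{Z;\alpha}\circ\beta^Z_{W,\alpha}\colon f_T(\cal{O}_Z,\cal{O}_W)\xrightarrow{\sim}\cal{O}_T$ for the trivialization produced by the move (using \ref{cartier} to identify $f_T(\cal{O}_Z,\cal{O}_W)$ with $f([\cal{O}_W])$), and similarly $t^{Z'}_\alpha$; the isomorphism in question is $(t^{Z'}_\alpha)^{-1}\circ t^Z_\alpha$. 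The heart of the matter is the following claim, from which all three assertions follow: at each generic $\eta$ the unit $u^Z_\alpha\in\kappa(\eta)^\times$ comparing $t^Z_\alpha|_\eta$ with the canonical trivialization $\varphi^{Z_\eta}$ (which exists because $(Z\cap W)_\eta=\emptyset$) depends only on the cycle $[Z_\eta]$.

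To prove the claim I would unwind the defining diagram of $\beta^Z_{W,\alpha}$ after base change to $\eta$. At $\eta$ the sheaves $\cal{O}_W$, $Q^i_0$, $Q^i_\infty$ and the cycle $\alpha$ are all disjoint from $Z$, so the bundles $f([\cal{O}_W])$, $f([Q^i_0])$, $f([Q^i_\infty])$, $f(\alpha)$ acquire canonical trivializations; by the additivity of the canonical trivialization (the remark following \ref{varphi disjoint}) the top (cycle) row of the diagram becomes the identity under these trivializations. By contrast the varieties $B_i$ have dimension $n-a$ and are only required to meet $Z$ in the $Q^i_\ast$-part, so $f(M_i)$ and $f(\cal{O}_{B_i})$ need not be canonically trivial at $\eta$. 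The point is that these non-trivial factors cancel: the vertical maps $f(s^i_0),f(s^i_\infty)$ both carry $f(M_i)\otimes f([Q^i_\ast])$ to $f(\cal{O}_{B_i})$, and after trivializing the $f([Q^i_\ast])$ factor they become two isomorphisms $g^i_0,g^i_\infty\colon f(M_i)_\eta\xrightarrow{\sim}f(\cal{O}_{B_i})_\eta$. Chasing the square then yields $u^Z_\alpha=\prod_i\bigl(g^i_\infty\circ(g^i_0)^{-1}\bigr)$, a product over $i$ of the scalar discrepancies between the two short exact sequences $0\to M_i\to\cal{O}_{B_i}\to Q^i_\ast\to0$.

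Each such factor is precisely the unit $a_{Z_\eta}$ attached in \ref{GRR move} to the pair $(M_i,\cal{O}_{B_i})$ with maps $s^i_0,s^i_\infty$ and cokernels $Q^i_0,Q^i_\infty$ disjoint from $Z$. Hence by \ref{GRR move} each factor, and therefore $u^Z_\alpha$, depends only on $[Z_\eta]$, which proves the claim. Because $(Z,Z',W)$ is a Hilbert--Chow datum we have $[Z_\eta]=[Z'_\eta]$, so $u^Z_\alpha=u^{Z'}_\alpha$ and consequently $(t^{Z'}_\alpha)^{-1}\circ t^Z_\alpha|_\eta=(u^{Z'}_\alpha)^{-1}u^Z_\alpha\,(\varphi^{Z'_\eta})^{-1}\circ\varphi^{Z_\eta}=(\varphi^{Z'_\eta})^{-1}\circ\varphi^{Z_\eta}$, the canonical identification; this is the ``furthermore'' assertion. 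Independence of the move is then immediate: for a second move $\hat\alpha$ the same computation gives $(t^{Z'}_{\hat\alpha})^{-1}\circ t^Z_{\hat\alpha}|_\eta=(\varphi^{Z'_\eta})^{-1}\circ\varphi^{Z_\eta}$ as well, and since the two isomorphisms agree at every generic point they agree on the reduced scheme $T$. Finally, when $T$ is regular local with $Z\cap W$ as in \ref{phi proper}, the trivialization $\varphi^Z_T$ of \ref{varphi proper H, D adds} restricts to $\varphi^{Z_\eta}$ at each $\eta$ by its defining property, so $\phi^{Z,Z'}_T=(\varphi^{Z'}_T)^{-1}\circ\varphi^Z_T$ also restricts to the canonical identification at every $\eta$; comparing generic restrictions gives $\phi^{Z,Z'}_T=(t^{Z'}_\alpha)^{-1}\circ t^Z_\alpha$.

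The main obstacle is the cancellation step in the diagram chase: one must keep careful track of which determinant factors are canonically trivialized at $\eta$ (the cycle bundles, by generic disjointness) and which are not (the $f(M_i)$ and $f(\cal{O}_{B_i})$, since $B_i$ genuinely meets $Z$), and verify that the canonical additivity isomorphisms collapse the trivial part to the identity so that exactly the \ref{GRR move} discrepancies survive. Once the bookkeeping is arranged so that the residual unit is visibly a product of the quantities controlled by \ref{GRR move}, its invariance under $[Z]$ --- and hence all three conclusions --- drops out.
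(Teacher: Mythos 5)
Your proof is correct and takes essentially the same approach as the paper: restrict to generic points of the reduced base $T$, show that the move-induced trivialization $\varphi^{Z;\alpha}\circ\beta^Z_{W,\alpha}$ differs from the canonical trivialization $\varphi^{Z;W}_\eta$ by a unit depending only on $[Z_\eta]$ (via $\ref{GRR move}$), and then conclude all three assertions from the cancellation $[Z]=[Z']$ together with the characterization of $\phi_T^{Z,Z'}$ at generic points in the regular case. Your diagram chase exhibiting this unit as a product over $i$ of the $\ref{GRR move}$ discrepancies for the pairs $(M_i,\cal{O}_{B_i},s^i_0,s^i_\infty)$ just makes explicit what the paper asserts in a single line.
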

\begin{proof} We claim any choice of moving data produces the canonical isomorphism over the generic points of $T$.  Now $\ref{GRR move}$ shows the choice of short exact sequences affects the map $\varphi^{Z;\alpha}_\eta \circ {(\beta^Z_{W,\alpha})}_\eta : f_\eta(\cal{O}_Z,\cal{O}_W) \cong f_\eta(\cal{O}_Z,\alpha) \cong \cal{O}_\eta$ by a constant depending only on $[Z]$, and this is exactly canceled by the map back to $ f_T(\cal{O}_{Z'}, \cal{O}_W)$.  In other words, since $a_Z = a_{Z'}$, the following diagram commutes.

$$\xym{f_\eta(\cal{O}_Z, \cal{O}_W) \ar[d]_-{{(\beta^Z_{W,\alpha})}_\eta} \ar[r]^-{\varphi^{Z;W}_\eta} \ar@/^2pc/[rrr]^-{\phi_\eta^{Z,Z'}}  & \cal{O}_{T,\eta} \ar[r]^-= \ar[d]_-{a_Z}& \cal{O}_{T,\eta} \ar[d]^-{a_{Z'}}& f_\eta(\cal{O}_{Z'},\cal{O}_W) \ar[d]^-{{(\beta^{Z'}_{W,\alpha})}_\eta} \ar[l]_-{\varphi^{Z';W}_\eta} \\
f_\eta(\cal{O}_Z,\alpha) \ar[r]_-{\varphi_\eta^{Z; \alpha}} & \cal{O}_{T,\eta} \ar[r]_-= &\cal{O}_{T,\eta} & f_\eta(\cal{O}_{Z'},\alpha) \ar[l]^-{\varphi_\eta^{Z'; \alpha}}  \\ }$$

From this our claims follow: two choices of moving data produce isomorphisms which agree at the generic points of $T$, hence they agree; and the isomorphism $\phi_T^{Z,Z'} $ for $T$ regular is characterized by agreeing with the composition of the canonical trivializations over the generic points of $T$.
\end{proof}

\begin{corollary} \label{phi generic disjoint} Among Hilbert-Chow data over bases $T$ satisfying:
\begin{enumerate}
\item $T$ is either regular, or local; and
\item all generic points of $T$ correspond to disjoint subschemes;
\end{enumerate}
there exists a collection of isomorphisms $\{ \phi_T \}$ which:
 \begin{enumerate}
 \item is compatible with base change preserving generic disjointness;
 \item satisfies the cocycle condition; and
 \item extends the collection of $\ref{phi proper}$.
 \end{enumerate}
\end{corollary}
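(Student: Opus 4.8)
The plan is to define each $\phi_T$ by the single requirement that it restrict to the \emph{canonical identification} over every generic point of $T$, and then to verify that such an isomorphism exists in the two cases and satisfies the three properties. As already noted after \ref{phi disjoint}, on a reduced (in particular seminormal) scheme an isomorphism of line bundles is determined by its restriction to the generic points, so in each case the sought isomorphism is unique once it exists. By the reduction \ref{diagonal} we may assume $W = W'$, and by the remark following it we may assume $W = W_k \times_k T$ is constant. At a generic point $\xi$ of $T$, hypothesis (2) gives $Z_\xi \cap W = Z'_\xi \cap W = \emptyset$, so \ref{varphi disjoint} furnishes canonical trivializations $\varphi^{Z}_\xi, \varphi^{Z'}_\xi$, and we set $\phi_\xi := {(\varphi^{Z'}_\xi)}^{-1} \circ \varphi^{Z}_\xi$, the canonical identification of \ref{phi disjoint}. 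The task is then to exhibit, for each admissible $T$, the unique $\phi_T : f_T(\cal{O}_Z, \cal{O}_W) \cong f_T(\cal{O}_{Z'}, \cal{O}_W)$ restricting to $\phi_\xi$ at every generic $\xi$.

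For $T$ the spectrum of a local ring this is exactly what \ref{proper compat moving} provides: the isomorphism ${(\varphi^{Z'; \alpha} \circ \beta^{Z'}_{W,\alpha})}^{-1} \circ (\varphi^{Z;\alpha}\circ\beta^Z_{W,\alpha})$ built from any choice of move (\ref{moving lemma}) is independent of that choice and agrees with the canonical identification over every generic point of $T$; we take this as $\phi_T$ in the local case. For $T$ regular, hence normal and reduced, the family $\{\phi_\xi\}$ assembles into a rational section, nowhere zero at the generic points, of the invertible sheaf $\cal{H}om(f_T(\cal{O}_Z,\cal{O}_W), f_T(\cal{O}_{Z'},\cal{O}_W))$. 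By normality (algebraic Hartogs) this rational section is a global nowhere-vanishing section, i.e.\ an isomorphism $\phi_T$, provided it is a unit at every point $t$ of depth $1$. At such $t$ the ring $\cal{O}_{T,t}$ is a discrete valuation ring, and \ref{proper compat moving} applied over $\spec \cal{O}_{T,t}$ yields an honest isomorphism of the two line bundles there extending $\phi_\xi$ over the generic point of the DVR; since a section of a line bundle over a DVR is determined by its value at the generic point, our rational section coincides with this isomorphism over $\cal{O}_{T,t}$ and is a unit there. On a base that is both regular and local the two definitions agree, once more because each restricts to $\phi_\xi$ generically and $T$ is reduced.

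By construction every $\phi_T$ restricts to $\phi_\xi$ at each generic point of $T$, and the three properties follow from the uniqueness principle. For base change along $f : S \to T$ preserving generic disjointness, both $f^\ast\phi_T$ and $\phi_S$ are isomorphisms on the reduced scheme $S$ whose restrictions to each generic point of $S$ equal the canonical identification there (\ref{phi disjoint} being base-change compatible), so they coincide. The cocycle identity $\phi_T^{Z',Z''}\circ\phi_T^{Z,Z'} = \phi_T^{Z,Z''}$ holds because both sides restrict at generic points to canonical identifications, which satisfy the cocycle condition by \ref{phi disjoint}. Finally, for a datum already covered by \ref{phi proper} (normal base, finite incidence over points of depth $1$) the new $\phi_T$ and the isomorphism of \ref{phi proper} again agree generically---indeed \ref{proper compat moving} records this agreement directly over regular local rings with proper incidence---so $\{\phi_T\}$ extends the collection of \ref{phi proper}.

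The only genuine content is the existence of $\phi_T$ in the regular, non-local case. The hard analytic input---the moving lemmas and the Grothendieck--Riemann--Roch argument showing the chosen move is immaterial---is already packaged in \ref{moving lemma} and \ref{proper compat moving}, so the sole remaining obstacle is to pass from these local (in fact DVR-level) statements to a global isomorphism; this is precisely what the normality/Hartogs reduction to depth-$1$ points accomplishes. Everything else reduces to the principle that isomorphisms on reduced schemes are detected at generic points.
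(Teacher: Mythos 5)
Your proposal is correct, and it rests on the same pillars as the paper's own argument: the moving construction of \ref{moving lemma} and \ref{proper compat moving} over local bases, together with the principle that an isomorphism of line bundles on a reduced scheme is determined by its restrictions to generic points. The comparison is still worth recording, because the paper's proof is only two sentences long: it checks base change by asserting that moves over $T$ pull back to suitable moves over $S$, checks the cocycle condition at generic points, and never spells out how $\phi_T$ is produced when $T$ is regular but not local. Your reduction to depth-one points, where $\cal{O}_{T,t}$ is a DVR so that \ref{moving lemma} and \ref{proper compat moving} apply, followed by algebraic Hartogs on the normal scheme $T$, supplies exactly that missing globalization (one could equally invoke \ref{proper compat moving} over $\cal{O}_{T,t}$ for \emph{every} $t$, since \ref{moving lemma} works over any local ring, but codimension one suffices). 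For base change you argue instead that $f^\ast \phi_T$ and $\phi_S$ both restrict to the canonical identifications of \ref{phi disjoint} at the generic points of $S$; this is arguably more robust than the paper's ``pull back the move,'' since a generic point of $S$ may map to a non-generic point $t$ of $T$ at which the intermediate sheaves $Q^i_\ast$ of the chosen move can meet $Z_t$, so the pulled-back move need not literally satisfy condition (1) of \ref{moving lemma}. One small step you leave implicit: your base-change argument needs $\phi_T$ to equal the canonical identification at every point of the open disjoint locus (the image of a generic point of $S$), not merely at the generic points of $T$; this follows by applying your own uniqueness principle to that open locus, so it is a one-line patch rather than a gap.
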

\begin{proof}  To see our construction commutes with base change preserving generic disjointness, note that moves valid over $T$ (i.e., producing $\beta_{W,\alpha}$) pullback via $S \to T$ to suitable moves on $S$.  The cocycle condition (an equality of two isomorphisms of line bundles on a reduced scheme) holds at the generic points, hence it holds everywhere. 
\end{proof}

The previous corollary provides us an isomorphism $f_s(\cal{O}_Z, \cal{O}_W) \cong f_s(\cal{O}_{Z'}, \cal{O}_W)$ for a Hilbert-Chow datum over $s$ the spectrum of a field corresponding to a point in the incidence locus, namely the restriction of the isomorphism over the local ring, possibly followed by a field extension.  In the next proposition we observe this is compatible with specializations from the locus of disjoint subschemes into the incidence locus.

\begin{proposition} Let $(Z,Z',W)$ be a Hilbert-Chow datum over $s$ the spectrum of a field $\kappa(s)$ corresponding to a point of incidence, i.e., $Z \cap W, Z' \cap W \neq \emptyset$.  Let $(Z_{T}, Z'_{T}, W)$ be a Hilbert-Chow datum over $T$ the spectrum of a DVR covering a generization from $s$ to the locus of disjoint subschemes, and with $T_0 = s$.  Then the isomorphism
$$ f_s(\cal{O}_Z, \cal{O}_W) \xrightarrow{\text{can}} f_{T}(\cal{O}_{Z_{T}}, \cal{O}_W ) \times_T s \xrightarrow{(\phi_T^{Z_T, Z'_T}) \times_T s} f_T(\cal{O}_{Z'_T}, \cal{O}_W ) \times_T s \xleftarrow{\text{can}} f_s(\cal{O}_{Z'}, \cal{O}_W)$$
is equal to the isomorphism induced by $\phi_R := \phi_{\spec R}$, where $(R,\mf{m}, K= R /\mf{m})$ is the (seminormal) local ring of the image of $s$ on ${(\scr{H}_a \times_{\scr{C}_a} \scr{H}_a)}^{sn}$.  In other words, the previously displayed isomorphism is equal to $(-) \otimes_K \kappa(s)$ of the following isomorphism:
$$ f_K (\cal{O}_Z, \cal{O}_W) \xrightarrow{\text{can}} f_{R}(\cal{O}_{Z_{R}}, \cal{O}_W ) \otimes_R K \xrightarrow{(\phi_R^{Z_R, Z'_R}) \otimes_R K} f_R(\cal{O}_{Z'_R}, \cal{O}_W ) \otimes_R K \xleftarrow{\text{can}} f_{K}(\cal{O}_{Z'}, \cal{O}_W) .$$
In particular, if we generize to the locus of disjoint subschemes and then restrict, the resulting isomorphism at $s$ is independent of the choice of generization.
\end{proposition}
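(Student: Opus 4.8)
The plan is to exhibit both isomorphisms as the restriction to the closed point of a \emph{single} isomorphism over $R$, using the base-change functoriality built into the collection $\{\phi_T\}$ of $\ref{phi generic disjoint}$. First I would record that $\phi_R^{Z_R,Z'_R}$ is defined. The triple $(Z_R,Z'_R,W)$ is a Hilbert--Chow datum, so every generic point of $\spec R$ lies in a component of the Hilbert fibre product containing a disjoint pair; since disjointness of subschemes is an open condition and the components are irreducible, each such generic point is \emph{itself} disjoint. Thus $R$ is a local (indeed seminormal) base all of whose generic points correspond to disjoint subschemes, and $\ref{phi generic disjoint}$ furnishes $\phi_R^{Z_R,Z'_R}: f_R(\cal{O}_{Z_R},\cal{O}_W)\cong f_R(\cal{O}_{Z'_R},\cal{O}_W)$.

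Next I would produce a factorization of the $T$-datum through $\spec R$. The Hilbert--Chow datum $(Z_T,Z'_T,W)$ is a morphism $g_T:\spec T\to {(\scr{H}_a\times_{\scr{C}_a}\scr{H}_a)}^{sn}$ carrying the closed point to the image $y$ of $s$ and the generic point to a generization lying in the locus of disjoint subschemes. Since $R=\cal{O}_{y}$ and $\spec R$ is precisely the set of generizations of $y$, the map $g_T$ factors as $\spec T\xrightarrow{h}\spec R\xrightarrow{g_R}{(\scr{H}_a\times_{\scr{C}_a}\scr{H}_a)}^{sn}$, with $g_R$ tautological; pulling back the universal families along $h$ identifies $(Z_T,Z'_T,W)$ with $h^*(Z_R,Z'_R,W)$. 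The generic point of $T$ is disjoint by hypothesis, so $h$ preserves generic disjointness, and the base-change compatibility asserted in $\ref{phi generic disjoint}$ yields the key identity $\phi_T^{Z_T,Z'_T}=h^*\phi_R^{Z_R,Z'_R}$, modulo the canonical base-change isomorphisms for $f$.

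Finally I would restrict along the closed point $s\hookrightarrow\spec T$. Its composite with $h$ is the inclusion of the closed point of $\spec R$ followed by $K\to\kappa(s)$ on residue fields. Restricting the identity $\phi_T^{Z_T,Z'_T}=h^*\phi_R^{Z_R,Z'_R}$ along $s\hookrightarrow\spec T$, and using that the determinant's base-change isomorphisms are compatible with composition of morphisms (Section $\ref{Kprelim}$), the left-hand side becomes the first displayed composite $(\phi_T^{Z_T,Z'_T})\times_T s$ flanked by the two ``can'' maps, while the right-hand side becomes $(-)\otimes_K\kappa(s)$ applied to the second displayed composite built from $(\phi_R^{Z_R,Z'_R})\otimes_R K$. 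This is exactly the asserted equality, and the ``in particular'' clause then follows immediately: the right-hand expression is manufactured from $R$ and $s$ alone, with no reference to $T$, so the restriction at $s$ of the generize-then-restrict isomorphism is independent of the chosen generization.

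I expect the main obstacle to be bookkeeping rather than conceptual: one must check that the several base-change (``can'') identifications compose coherently along $s\hookrightarrow\spec T\xrightarrow{h}\spec R$, which rests entirely on the compatibility of the determinant functor's base-change isomorphisms with composition of scheme morphisms; and one must confirm, via the openness of disjointness combined with the defining condition of a Hilbert--Chow datum, that $\phi_R$ genuinely lies in the collection of $\ref{phi generic disjoint}$ (i.e.\ that hypothesis (2) there is satisfied by $R$). Once these two points are in place the statement is pure functoriality.
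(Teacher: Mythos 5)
Your overall architecture --- factor $g_T$ through $h\colon T\to \spec R$ (licit because $T$ is local and its closed point maps to $y$), identify $(Z_T,Z'_T,W)$ with $h^*(Z_R,Z'_R,W)$, and then obtain both sides of the asserted equality by restricting a single isomorphism over $R$ to the closed point --- is the same as the paper's. The weak point is the step where you write $\phi_T^{Z_T,Z'_T}=h^*\phi_R^{Z_R,Z'_R}$ as a direct instance of the base-change clause of $\ref{phi generic disjoint}$. That clause was established by pulling back moving data: a move valid over the target pulls back to a valid move over the source, and this requires the auxiliary incidence conditions of $\ref{moving lemma}$ (the supports $Q^i_*$ must avoid $Z$ over the \emph{generic} points of the source). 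When generic points map to generic points this is automatic; but $h$ sends the generic point $\eta$ of $T$ to the image of $\eta$ in $\spec R$, which is a disjoint pair but in general \emph{not} a generic point of $\spec R$, and nothing guarantees that a move chosen over $\spec R$ stays in general position at that point. So the identity $\phi_T=h^*\phi_R$ is not a formal consequence of the compatibility as proved; it is essentially the content of the proposition, and your proof in effect assumes its hardest part.

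The repair is exactly what the paper's proof supplies. Since $T$ is reduced, it suffices to check $\phi_T=h^*\phi_R$ at $\eta$. The paper replaces $R$ by $R/I$, where $I$ is the prime cut out by the image of $\eta$, so that this image becomes the generic point $\spec L$ of a local domain; then $(h^*\phi_R)|_\eta=(\phi_L)\times_L\eta$ and $\phi_T|_\eta=\phi_\eta$, and both coincide with the canonical composite of trivializations because $\spec L$ and $\eta$ carry pairs of disjoint subschemes, where compatibility is unconditional ($\ref{phi disjoint}$); agreement at $\eta$ then gives agreement over all of $T$. (Equivalently: localize $R$ at the image of $\eta$; on that all-disjoint local base, $\ref{proper compat moving}$ forces the restriction of $\phi_R$ to agree with the canonical identification at generic points, hence everywhere by reducedness.) With this insertion your argument closes up: the factorization through $\spec R$, the bookkeeping of the determinant base-change isomorphisms along $s\hookrightarrow T\xrightarrow{h}\spec R$, and the ``in particular'' clause are all as in the paper.
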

\begin{proof} 
Since the collection $\{ \phi_T \}$ is compatible with base change preserving generic disjointness, we may replace $R$ by $R/I$ for some ideal $I \subset R$ such that $R/I$ is a local domain whose generic point $\spec L$ is the image of the generic point of $T$.  Therefore we have commutative squares:

$$\xym{ L \ar[r] & \kappa(\eta) \\ R \ar[u] \ar[d] \ar[r] & \Gamma(T,\cal{O}_T) \ar[u] \ar[d] \\ K \ar[r] & \kappa(s) \\ }$$
To show $\phi_R \times_R  T = \phi_T$, it suffices to show they agree at $\eta$, i.e., that $(\phi_R \times_R  T) \times_T \eta = \phi_T \times_T \eta$.  But this is equivalent to $(\phi_L) \times_L \eta = \phi_\eta$, which is a consequence of the compatibility with base change on pairs of disjoint subschemes ($\ref{phi disjoint}$).
\end{proof}

\begin{corollary} \label{near existence DD} Among Hilbert-Chow data satisfying at least one of the following conditions:
\begin{enumerate}
\item the conditions of $\ref{phi generic disjoint}$;
\item the base $T$ is a field;
\end{enumerate}
there exists a collection of isomorphisms $\{ \phi_T \}$ which:
\begin{enumerate}
\item satisfies the cocycle condition; 
\item extends the collection of $\ref{phi generic disjoint}$ (so is compatible with base change preserving generic disjointness); and 
\item is compatible with specialization from the locus of disjoint subschemes to the incidence locus.
\end{enumerate} 
\end{corollary}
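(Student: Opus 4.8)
The plan is to assemble the required collection $\{\phi_T\}$ from pieces already in hand. On a Hilbert-Chow datum of the first type (base $T$ regular or local, with generically disjoint subschemes) I would simply take the isomorphism furnished by $\ref{phi generic disjoint}$. On a Hilbert-Chow datum whose base is a field $s$ corresponding to a point of the incidence locus, I would take the isomorphism $\phi_s$ manufactured in the preceding proposition by generizing to the locus of disjoint subschemes and restricting; that proposition shows $\phi_s$ is well defined, independent of the chosen generization, and equals $\phi_R \otimes_R \kappa(s)$ for the seminormal local ring $R$ of the image of $s$ on ${(\scr{H}_a \times_{\scr{C}_a} \scr{H}_a)}^{sn}$. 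There is no conflict on the overlap: although every field is regular, an incidence field fails the generic-disjointness hypothesis of $\ref{phi generic disjoint}$, so the second rule applies exactly on the fields not already covered by the first, and on disjoint fields both prescriptions agree.

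Properties (2) and (3) are then essentially immediate. Since on data of the first type nothing is changed, the collection extends that of $\ref{phi generic disjoint}$, and compatibility with base change preserving generic disjointness is inherited directly; base change among incidence fields reduces to the formula $\phi_s = \phi_R \otimes_R \kappa(s)$ and the functoriality of $\otimes_R(-)$. Compatibility with specialization from the disjoint locus into the incidence locus is precisely the assertion of the preceding proposition: for any DVR $T$ covering such a generization with $T_0 = s$ one has $\phi_T^{Z_T,Z'_T} \times_T s = \phi_s^{Z,Z'}$, and this is built into the definition of $\phi_s$.

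The real content is the cocycle condition. On data of the first type it holds by $\ref{phi generic disjoint}$, so it remains to verify it at an incidence field $s$, i.e.\ for a triple $(Z,Z',Z'',W)$ whose three subschemes share a common cycle. Here I would exploit that disjointness from $W$ is a condition on the common cycle $[Z]=[Z']=[Z'']$: its image in $C$ lies in $\overline{U}$ and generizes into $U$, so all three subschemes become simultaneously disjoint from $W$ under a single generization. Choosing one DVR $T$ equipped with a map to the seminormalized triple fibre product ${(\scr{H}_a \times_{\scr{C}_a} \scr{H}_a \times_{\scr{C}_a} \scr{H}_a)}^{sn}$, with $T_0 = s$ and all three families disjoint over $T_\eta$, the base $T$ is local with generically disjoint fibres, hence of the first type; thus by $\ref{phi generic disjoint}$ the three pairwise isomorphisms $\phi_T^{Z_T,Z'_T}$, $\phi_T^{Z'_T,Z''_T}$, $\phi_T^{Z_T,Z''_T}$ satisfy the cocycle relation over $T$. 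Applying the preceding proposition to each of the three pairs identifies $\phi_s$ with the restriction $\phi_T \times_T s$, and since restriction to the closed point is a functor it carries the cocycle relation over $T$ down to the cocycle relation at $s$.

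The step I expect to be the main obstacle is the production of this single DVR $T$: one must pass to the triple fibre product of the hypercovering and lift to it the generization of the common cycle from $\overline{U}$ into $U$, checking that the generic point genuinely parametrizes three families each disjoint from $W$ and that $T_0 = s$. This is the same sort of existence assertion already used (for pairs) in the preceding proposition, obtained by choosing a minimal prime of the relevant local ring whose image in $C$ is a disjoint point and dominating the corresponding local domain by a DVR; once $T$ is in place, the cocycle follows formally from $\ref{phi generic disjoint}$ together with the identification $\phi_s = \phi_T \times_T s$.
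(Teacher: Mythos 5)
Your proposal is correct and follows essentially the same route as the paper: define $\phi_s$ at incidence field points by restriction from the covering local ring (the preceding proposition guaranteeing independence of the generization), and obtain the cocycle condition at such points by noting it holds after a common generization to the disjoint locus and that the covering isomorphisms restrict to the $\phi_s$'s. Your explicit construction of a single DVR through the seminormalized triple fibre product is precisely the content compressed into the paper's phrase ``if the cocycle condition holds after generization and the covering isomorphism is compatible with base change,'' so the only step you flag as an obstacle is one the paper leaves implicit as well.
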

\begin{proof}The compatibility with specialization is built into the construction.  The new feature to check is the cocycle condition on field points mapping to the incidence locus.  But if the cocycle condition holds after generization and the covering isomorphism is compatible with base change, then the collection must also satisfy the cocycle condition at new (field) points.  
\end{proof}

We augment $\ref{near existence DD}$ to include specializations fully within the incidence locus, hence we have the first part of $\ref{main thm intro}$.

\begin{theorem} \label{existence of DD} Among Hilbert-Chow data satisfying at least one of the following conditions:
\begin{enumerate}
\item the conditions of $\ref{phi generic disjoint}$;
\item the base $T$ is a field;
\item the base $T$ is a DVR;
\end{enumerate}
there exists a collection of isomorphisms $\{ \phi_T \}$ which:
\begin{enumerate}
\item satisfies the cocycle condition; 
\item extends the collection of $\ref{near existence DD}$ (so is compatible with base change preserving generic disjointness); and
\item is compatible with arbitrary specialization.
\end{enumerate}
Therefore, in the notation of $\ref{main thm intro}$, the incidence bundle $\cal{L} \in \pic(Y_0)$ lifts to an element $(\cal{L}, \phi) \in \pic(Y_\bullet)$. 
\end{theorem}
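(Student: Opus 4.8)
The plan is to obtain the descent datum by gluing the field-level isomorphisms of $\ref{near existence DD}$ across DVRs. By $\ref{field DVR}$ it suffices to produce the collection $\{\phi_T\}$ on spectra of fields and complete DVRs, compatibly with base change and the cocycle condition; the resulting data then yield, via $\ref{pic dvrs}$ applied to the seminormal scheme $Y_1$, an honest isomorphism $\phi : p_0^\ast \cal{L} \cong p_1^\ast \cal{L}$, after which the cocycle condition on the reduced scheme $Y_2$ can be verified at generic points. The collection on field points is already in hand from $\ref{near existence DD}$, and for a DVR whose generic point lies in the disjoint locus the base is local with generic disjointness, so $\phi_R$ is provided by $\ref{phi generic disjoint}$ and restricts correctly to the closed point by the specialization compatibility established just before $\ref{near existence DD}$. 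Hence the only genuinely new case is a DVR $R$ whose generic point already lands in the incidence locus.

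To treat such a DVR, I would first exploit that we have base-changed everything to $\overline{U}$: every generic point of $\overline{U}$ lies in the disjoint locus $U$, since $\overline{U}$ is the closure of $U$. Consequently, for the image $y \in Y_1$ of the closed point of $\spec R$, the local ring $\cal{O}_{Y_1,y}$ is a local base all of whose generic points correspond to disjoint subschemes; it is therefore a local Hilbert-Chow datum with generic disjointness, and $\ref{phi generic disjoint}$ furnishes an isomorphism $\phi_{\cal{O}_{Y_1,y}} : f(\cal{O}_Z,\cal{O}_W) \cong f(\cal{O}_{Z'},\cal{O}_W)$ of invertible sheaves over $\cal{O}_{Y_1,y}$. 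I would then define $\phi_R$ to be the pullback of $\phi_{\cal{O}_{Y_1,y}}$ along $\spec R \to \spec \cal{O}_{Y_1,y}$. Because the pullback of an isomorphism of invertible sheaves automatically commutes with restriction to the closed and generic points, the compatibility required by $\ref{pic dvrs}$ is built in, provided these restrictions match the field-level isomorphisms.

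The main point to verify---and the principal obstacle, since the divisor construction of $\ref{varphi proper HC}$ is unavailable once the incidence fails to be generically empty over the base---is precisely this matching at incident points. The restriction of $\phi_{\cal{O}_{Y_1,y}}$ to an incident field point is, by construction, obtained by generizing within $\cal{O}_{Y_1,y}$ to the disjoint locus and then restricting; this is exactly the recipe defining $\phi$ at incident field points in $\ref{near existence DD}$, and its independence of the chosen generization is the content of the proposition immediately preceding $\ref{near existence DD}$. Thus $\phi_R$ restricts to $\phi_\eta$ and $\phi_s$ as needed, and the cocycle condition, holding at the disjoint generic points by $\ref{phi disjoint}$, propagates to all of the reduced $Y_2$. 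Assembling the field and DVR data through $\ref{pic dvrs}$ then produces $(\cal{L},\phi) \in \pic(Y_\bullet)$, which is the first assertion of $\ref{main thm intro}$.
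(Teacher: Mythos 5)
Your overall architecture is sound and close in spirit to the paper's: reduce to fields and DVRs via $\ref{field DVR}$/$\ref{pic dvrs}$, take the field cases and the generically-disjoint DVR cases from $\ref{near existence DD}$, and then treat DVRs lying entirely in the incidence locus as the one new case. Where you diverge is the mechanism for that last case: you construct $\phi_R$ by pulling back an isomorphism over $\spec \cal{O}_{Y_1,y}$ furnished by $\ref{phi generic disjoint}$, whereas the paper's proof is a one-line appeal to Lemma $\ref{dense specializations}$ together with $\ref{near existence DD}$ and seminormality: the normalization argument in that lemma shows a pointwise function which varies algebraically along DVRs touching the disjoint locus automatically varies algebraically along \emph{all} DVRs, so no construction on the bad DVRs is needed at all. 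Your route is viable, buys slightly more (an isomorphism over the whole local ring, not just along the DVR), and your matching argument at incident field points via the proposition preceding $\ref{near existence DD}$ is exactly right.

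There is, however, a genuine gap in the one claim your construction cannot do without: that \emph{all} generic points of $\spec \cal{O}_{Y_1,y}$ correspond to pairs of disjoint subschemes, so that $\ref{phi generic disjoint}$ applies. You deduce this from the fact that every generic point of $\overline{U}$ lies in $U$; that inference is a non sequitur, because generic points of $Y_1$ need not lie over generic points of $\overline{U}$. The Hilbert--Chow morphism has positive-dimensional fibers, and $Y_1$ is (the seminormalization of) the fiber product $Y_0 \times_{\overline{U}} Y_0$; such fiber products typically acquire irreducible components lying over proper closed subsets of $\overline{U}$, possibly inside the incidence locus $\overline{U} - U$, and the minimal primes of $\cal{O}_{Y_1,y}$ see every component through $y$. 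The density property you need is precisely the generization condition the paper builds into the definition of a Hilbert(--Chow) datum (``every point admits a generization to the locus of disjoint subschemes''), which is also exactly the hypothesis consumed by the paper's Lemma $\ref{dense specializations}$: it enters as a hypothesis on the data, not as a consequence of $\overline{U}$ being the closure of $U$. If you source the claim there --- i.e., invoke that the points of $Y_1$ in question satisfy the generization condition, so that no minimal prime of $\cal{O}_{Y_1,y}$ lies in the incidence locus --- your argument goes through; as written, the justification fails.
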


\begin{proof}  This follows from the general lemma $\ref{dense specializations}$, the preceding result $\ref{near existence DD}$, and the hypothesis that $T$ is seminormal.
\end{proof}

Now we conclude the proof of $\ref{main thm intro}$.

\begin{theorem}[descent property]  \label{end game} The element $(\cal{L}, \phi = \{ \phi_T \} )$ of $\ref{existence of DD}$ has the descent property: for any cycle $(z,W) \in \overline{U} \subset \scr{C}_a \times \scr{C}_b$ there exists an open subscheme $V \subset \overline{U}$ containing $(z,W)$ and an isomorphism $t : \cal{L} \arrowvert_{\pi_0^{-1}(V)} \cong \cal{O}_{\pi_0^{-1}(V)}$ which is compatible with $\phi$.  Therefore, the incidence bundle $\cal{L}$ descends to $\overline{U}$.
 \end{theorem}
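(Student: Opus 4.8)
The plan is to exhibit, near each point of $\overline U$, the compatible local trivialization demanded by the descent criterion \ref{sn pic inj}, and then invoke that proposition to conclude; it applies because $\cal{O}_{\overline U} = {\pi_\bullet}_\ast(\cal{O}_{Y_\bullet})$ by the remarks following \ref{proper hyper}, so once the descent property is verified, $(\cal{L},\phi)$ descends to a unique $\cal{M}\in\pic(\overline U)$. First I would reduce, using the reduction to the diagonal \ref{diagonal} and the remark after it, to the case where the second family $W = W_k \times_k T$ is constant and integral, so that $\cal{L} = f(\cal{O}_Z,\cal{O}_W)$ with only $Z$ varying. Fix $(z,W)\in\overline U$, choose a preimage in $Y_0$, and let $Z$ be the universal family over the corresponding local ring. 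Since $\overline U$ is the closure of the disjoint locus, $(Z\cap W)_\eta=\emptyset$ at every generic point, so \ref{moving lemma} applies and produces fixed subvarieties $B_i\subset P$, sheaves $M_i$, and short exact sequences whose cycle $\alpha = [W]+\sum_i([Q^i_0]-[Q^i_\infty])$ is disjoint from $Z$.

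Next I would produce the neighborhood and the trivialization. Because we work on $\scr{H}_a$, where the support of a family coincides with the support of its associated cycle, the condition $\supp(Z)\cap\alpha=\emptyset$ depends only on the image in $\scr{C}_a$; it is open and contains $(z,W)$, hence cuts out a set of the form $\pi_0^{-1}(V)$ with $V\subset\overline U$ open. Over $\pi_0^{-1}(V)$ the fixed moving data remains valid, and I set $t := \varphi^{Z;\alpha}\circ\beta^Z_{W,\alpha}$, where $\beta^Z_{W,\alpha}$ is the isomorphism of the construction preceding \ref{proper compat moving} (using $f(\cal{O}_W)\cong f([\cal{O}_W])$ from \ref{cartier}), and $\varphi^{Z;\alpha}$ is the canonical trivialization of $f(\alpha)$ available since $Z$ is disjoint from $\alpha$. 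This gives an isomorphism $t:\cal{L}\arrowvert_{\pi_0^{-1}(V)}\cong\cal{O}_{\pi_0^{-1}(V)}$.

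Finally I would check that $t$ is compatible with $\phi$. On $Y_1$ over $V$ we have a Hilbert--Chow datum $(Z,Z',W)$, and since $[Z]=[Z']$ forces $\supp(Z)=\supp(Z')$ on $\scr{H}_a$, the same moving data is valid for $Z'$; thus $p_0^\ast t$ and $p_1^\ast t$ are the trivializations $\varphi^{Z;\alpha}\circ\beta^Z_{W,\alpha}$ and $\varphi^{Z';\alpha}\circ\beta^{Z'}_{W,\alpha}$ attached to $Z$ and $Z'$, and the compatibility in \ref{sn pic inj} amounts to the identity $\phi = {(\varphi^{Z';\alpha}\circ\beta^{Z'}_{W,\alpha})}^{-1}\circ(\varphi^{Z;\alpha}\circ\beta^Z_{W,\alpha})$. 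Both sides are isomorphisms of invertible sheaves on the reduced scheme $\pi_1^{-1}(V)$, so it suffices to check equality at its generic points; there the cycles are disjoint, $\phi$ is the canonical identification of \ref{phi disjoint}, and by \ref{proper compat moving} the moving isomorphism agrees with the canonical one as well. Hence they coincide throughout, the descent property holds, and \ref{sn pic inj} delivers the descent of $\cal{L}$ to $\overline U$.

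The main obstacle I anticipate is this last propagation: \ref{proper compat moving} supplies the equality $\phi = {(\varphi^{Z';\alpha}\circ\beta^{Z'}_{W,\alpha})}^{-1}\circ(\varphi^{Z;\alpha}\circ\beta^Z_{W,\alpha})$ only over spectra of (regular) local rings, whereas $\pi_1^{-1}(V)$ is global and possibly singular; the argument succeeds only because both sides are maps of line bundles on a reduced scheme that already agree at the disjoint generic points. A secondary difficulty is guaranteeing that $t$ — and in particular $\beta^Z_{W,\alpha}$ — is defined over all of $\pi_0^{-1}(V)$ and not merely at the chosen point: this rests on the disjointness $Z\cap\alpha=\emptyset$ being a support-only, hence $\pi_0$-saturated, open condition, together with the vanishing results for the $\coh_{\leq n-a-2}$ correction sheaves that legitimize the identification $f(\cal{O}_Z,Q^i_\ast)\cong f(\cal{O}_Z,[Q^i_\ast])$.
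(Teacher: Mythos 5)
Your proposal is correct and takes essentially the same route as the paper's proof: reduce to fixed $W$ via \ref{diagonal}, use \ref{moving lemma} to replace $W$ by a rationally equivalent cycle $\alpha$ disjoint from all subschemes over a neighborhood $V$ (an open condition depending only on supports, hence of the form $\pi_0^{-1}(V)$), take $t = \varphi^{Z;\alpha}\circ\beta^Z_{W,\alpha}$, verify compatibility with $\phi$ by \ref{proper compat moving}, and conclude with \ref{sn pic inj}. The only differences are cosmetic: the paper treats the disjoint case separately (where the canonical trivialization suffices and no move is needed), and your explicit reduction of the compatibility check to the generic points of the reduced scheme $\pi_1^{-1}(V)$ spells out what the paper compresses into its single citation of \ref{proper compat moving}.
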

 
\begin{proof} This structure is built into the definition of the descent datum $\phi = \{ \phi_T \}$.  By $\ref{diagonal}$ we may assume $W$ is fixed.  Suppose $z$ and $W$ are disjoint.  Then $W$ is disjoint from all cycles in a neighborhood $V$ of $z$, and also from all subschemes in $V_0: = \pi_0^{-1}(V) \subset \scr{H}_a$.  We let $Z_{V_0} \into P \times V_0$ denote the corresponding family.  On $V_0$ we use the canonical trivialization $\varphi^{Z_{V_0}}_{V_0} :  f_{V_0} (\cal{O}_{Z_{V_0}} , \cal{O}_W) :=  \det_{V_0} (\textbf{R} \pi_\ast (\cal{O}_{Z_{V_0}} \dotimes \cal{O}_W)) \cong \cal{O}_{V_0}$ induced by the acyclicity of $\cal{O}_{Z_{V_0}} \dotimes \cal{O}_W$.  Then by our definition of $\phi$ on disjoint subschemes, the following diagram commutes:

$$\xym{f_{V_0} (\cal{O}_{Z_{V_0}} , \cal{O}_W) \ar[d]^-{\phi^{Z,Z'}_{V_0}} \ar[r]^-{\varphi^{Z_{V_0}}_{V_0}} & \cal{O}_{V_0} \ar[d]^= \\
f_{V_0} (\cal{O}_{Z'_{V_0}} , \cal{O}_W) \ar[r]^-{\varphi^{Z'_{V_0}}_{V_0}} & \cal{O}_{V_0} \\}$$

\medskip \noi For a pair $(z, W)$ in the incidence locus,  choose a collection of short exact sequences as in $\ref{moving lemma}$ moving $W$ to a rationally equivalent $\alpha$ such that $z \cap \alpha= \emptyset$.  Then also $z' \cap \alpha = \emptyset$ for $z'$ in a neighborhood $V \ni z$, and $\alpha$ is disjoint from all subschemes parameterized by $V_0 := \pi_0^{-1}(V)$.  Then we define $t$ to be the trivialization induced by the move, then the acyclicity of $\cal{O}_{Z_{V_0}} \dotimes \cal{O}_{W_i}$ (for all $W_i \in \supp(\alpha)$):

$$t : f_{V_0} (\cal{O}_{Z_{V_0}} , \cal{O}_W) \cong f_{V_0} (\cal{O}_{Z_{V_0}} , \alpha) \xrightarrow{\varphi_{V_0}^{Z_{V_0}}} \cal{O}_{V_0}.$$
This is compatible with $\phi$ by $\ref{proper compat moving}$.  Verifying the descent property guarantees effectiveness by $\ref{Chow exists}$ and $\ref{sn pic inj}$.
\end{proof}

\subsection{Conclusion of the proof of $\ref{nice main thm}$.}  Finally we verify the properties stated in $\ref{nice main thm}$.

\medskip \textbf{Descent of rational section.} To see we have actually constructed a Cartier divisor in $\overline{U} \subset \scr{C}_a \times \scr{C}_b$ supported on the incidence locus, consider the diagram whose vertical arrows are the restriction maps:
$$\xym{
\pic(\overline{U}) \ar[d] \ar[r] & \pic(Y_\bullet) \ar[d] \\
\pic(U) \ar[r] & \pic({(\pi^{-1}(U))}_\bullet) \\ }$$
The arrow in the bottom row is injective by $\ref{sn pic inj}$.  On the locus of disjoint subschemes, the isomorphism constructed in $\ref{varphi disjoint}$, $\varphi : \cal{L} \arrowvert_{U_0} \cong \cal{O}_{U_0}$, is an isomorphism of pairs $\cal{L} \arrowvert_{{(\pi^{-1}(U))}_\bullet} =(\cal{L} \arrowvert_{U_0}, \phi \arrowvert_{U_0 \times_U U_0}) \cong (\cal{O}_{U_0}, id)$.  Hence by the injectivity of the bottom row, the trivialization $\varphi$ descends to $U \subset \overline{U}$.

\medskip \textbf{Restriction to $U'$ is effective.}  To check that the restriction $D |_{U'}$ is effective, we may replace $U'$ with its normalization ${U'}^\nu$.  Then we may replace ${U'}^\nu$ with the local ring of some depth 1 point $t$ on ${U'}^\nu$.  By the assumption that the universal cycles intersect properly, over a given component $C \subset U'$, the incidence has dimension $\dim(C) -1$.  This is preserved by the finite base change ${U'}^\nu \to U'$.

Suppose first the incidence is generically finite onto its image.  Then $\ref{varphi proper HC}$ applies, and the coefficient of $t$ in $D |_{{U'}^\nu}$ is a sum of intersection multiplicities of properly intersecting components (weighted with positive coefficients).  If the incidence dominates $t$, this coefficient is positive by \cite[V.C.Thm.1(b)]{Ser}; in any case the coefficient is nonnegative.

If the incidence has generic positive dimension over its image, then its image must have dimension $\leq \dim(C)-2$.  Hence in this case the associated coefficient is 0.

\medskip \textbf{Intersection multiplicity.} On the Chow varieties we have the incidence bundle $\cal{M}$ and its rational section over the locus of disjoint cycles, giving the Cartier divisor $D \into \overline{U}$.  This pulls back via the Hilbert-Chow morphism $\pi :Y_0 \to \overline{U}$ to the determinant line bundle $\cal{L}$ and its rational section over the locus of disjoint subschemes.  Our goal is to relate the order of vanishing of a local defining equation of $D$, to intersection numbers.  So let $s_D$ be the canonical (rational) section of the line bundle $\cal{O}_{\overline{U}}(D)$.

If $g: T \to \overline{U}$ is a morphism from the spectrum of a discrete valuation ring $R \supset k$ (corresponding to cycles $Z,W$), there exists a discrete valuation ring $R'$ which is finite over $R$, and such that the composition $g' : T' := \spec R' \to T \to \overline{U}$ factors through $Y_0$.  (Note that if we start with a specialization from a generic point of $\overline{U}$, we can find a component of the Hilbert scheme so that no generic extension is necessary.)  If $\ord g'^* (s_D) = \deg (Z_{T'} \cdot W_{T'})$, it follows that $\ord g^* (s_D) = \deg (Z \cdot W)$.  Thus we may assume our specialization factors through the Hilbert scheme, corresponding to subschemes $\widetilde{Z}, \widetilde{W}$ such that $[\widetilde{Z}] =Z$ and $[\widetilde{W}] = W$.  Now we assume disjointness over the generic point $\eta \in T$, i.e., $g(\eta) \in U$.  Let $t \in T$ denote the closed point.

First we have:
$$\ord_{T}(s_D) = \ord_{T}(s_{\pi^*D}) = \sum_{p} {(-1)}^p \ell_{t} ( \cal{H}^p(\textbf{R} \pi_\ast (\cal{O}_{\widetilde{Z}} \dotimes \cal{O}_{\widetilde{W}})))$$
since each $\cal{H}^p(\textbf{R} \pi_\ast (\cal{O}_{\widetilde{Z}} \dotimes \cal{O}_{\widetilde{W}}))$ is a torsion $T$-module, and by \cite[Thm.3(v)]{KM}.

Since the scheme $P_T$ is smooth, the filtration of the $K_0$-groups by dimension is compatible with multiplication, thus $\cal{O}_{\widetilde{Z}} \dotimes \cal{O}_{\widetilde{W}}$ and $\textbf{R} \pi_\ast (\cal{O}_{\widetilde{Z}} \dotimes \cal{O}_{\widetilde{W}})$ are classes of dimension zero.  Then $\sum_{p} {(-1)}^p \ell_{t} ( \cal{H}^p(\textbf{R} \pi_\ast (\cal{O}_{\widetilde{Z}} \dotimes \cal{O}_{\widetilde{W}})))$ is equal to the degree of the $K_0$-classes $\cal{O}_{\widetilde{Z}} \dotimes \cal{O}_{\widetilde{W}}$ and $\textbf{R} \pi_\ast (\cal{O}_{\widetilde{Z}} \dotimes \cal{O}_{\widetilde{W}})$.  Note also the refined class $Z \cdot W$ is of the expected dimension, i.e., $Z \cdot W \in A_0(P_T)$.

We have $\cal{O}_{\widetilde{Z}} \dotimes \cal{O}_{\widetilde{W}} = \sum_i {(-1)}^i [\Tor_i^{P_{T}} (\cal{O}_{\widetilde{Z}}, \cal{O}_{\widetilde{W}})] \in K_0(P_T)$.  The degree of this class is computed by \cite[20.4]{Ful}: it is simply the degree of the refined class $Z \cdot W \in A_0(P_T)$, since the terms of dimension $<0$ necessarily vanish.

\bibliography{curvesreferences}{}

\begin{thebibliography}{10}

\bibitem{BK}
D.~Barlet and M.~Kaddar.
\newblock Incidence divisor.
\newblock {\em Internat. J. Math.}, 14(4):339--359, 2003.

\bibitem{Eis}
David Eisenbud, Frank-Olaf Schreyer, and Jerzy Weyman.
\newblock Resultants and {C}how forms via exterior syzygies.
\newblock {\em J. Amer. Math. Soc.}, 16(3):537--579 (electronic), 2003.

\bibitem{Fog}
John Fogarty.
\newblock Truncated {H}ilbert functors.
\newblock {\em J. Reine Angew. Math.}, 234:65--88, 1969.

\bibitem{Ful}
William Fulton.
\newblock {\em Intersection theory}, volume~2 of {\em Ergebnisse der Mathematik
  und ihrer Grenzgebiete. 3. Folge. A Series of Modern Surveys in Mathematics
  [Results in Mathematics and Related Areas. 3rd Series. A Series of Modern
  Surveys in Mathematics]}.
\newblock Springer-Verlag, Berlin, second edition, 1998.

\bibitem{GT}
S.~Greco and C.~Traverso.
\newblock On seminormal schemes.
\newblock {\em Compositio Math.}, 40(3):325--365, 1980.

\bibitem{SGA6}
A.~Grothendieck, P.~Berthelot, and L.~Illusie.
\newblock {SGA 6}.
\newblock {\em Lecture Notes in Mathematics}, 225.

\bibitem{H}
Robin Hartshorne.
\newblock {\em Algebraic geometry}.
\newblock Springer-Verlag, New York, 1977.
\newblock Graduate Texts in Mathematics, No. 52.

\bibitem{Knu}
Finn~F. Knudsen.
\newblock Determinant functors on exact categories and their extensions to
  categories of bounded complexes.
\newblock {\em Michigan Math. J.}, 50(2):407--444, 2002.

\bibitem{KM}
Finn~Faye Knudsen and David Mumford.
\newblock The projectivity of the moduli space of stable curves. {I}.
  {P}reliminaries on ``det'' and ``{D}iv''.
\newblock {\em Math. Scand.}, 39(1):19--55, 1976.

\bibitem{Kol}
J{\'a}nos Koll{\'a}r.
\newblock {\em Rational curves on algebraic varieties}, volume~32 of {\em
  Ergebnisse der Mathematik und ihrer Grenzgebiete. 3. Folge. A Series of
  Modern Surveys in Mathematics [Results in Mathematics and Related Areas. 3rd
  Series. A Series of Modern Surveys in Mathematics]}.
\newblock Springer-Verlag, Berlin, 1996.

\bibitem{M}
Barry Mazur.
\newblock Private letter to {B}runo {H}arris, Mar 10, 1993.

\bibitem{Rob}
Joel Roberts.
\newblock Chow's moving lemma.
\newblock In {\em Algebraic geometry, {O}slo 1970 ({P}roc. {F}ifth {N}ordic
  {S}ummer {S}chool in {M}ath.)}, pages 89--96. Wolters-Noordhoff, Groningen,
  1972.
\newblock Appendix 2 to: ``Motives'' ({{\i}t Algebraic geometry, Oslo 1970}
  (Proc. Fifth Nordic Summer School in Math.), pp. 53--82, Wolters-Noordhoff,
  Groningen, 1972) by Steven L. Kleiman.

\bibitem{ZD}
Joseph Ross.
\newblock The {H}ilbert-{C}how morphism and the incidence divisor: zero-cycles
  and divisors.
\newblock Submitted.

\bibitem{PROJ}
Joseph Ross.
\newblock The incidence divisor of cycles in projective space.
\newblock To appear in \textit{Communications in Algebra}.

\bibitem{RossSN}
Joseph Ross.
\newblock A characterization of seminormal schemes.
\newblock {\em J. Algebra}, 323(5):1494--1502, 2010.
\newblock Corrigendum 323(11):3155-3156, 2010.

\bibitem{Ser}
Jean-Pierre Serre.
\newblock {\em Local algebra}.
\newblock Springer Monographs in Mathematics. Springer-Verlag, Berlin, 2000.
\newblock Translated from the French by CheeWhye Chin and revised by the
  author.

\bibitem{Trav}
Carlo Traverso.
\newblock Seminormality and {P}icard group.
\newblock {\em Ann. Scuola Norm. Sup. Pisa (3)}, 24:585--595, 1970.

\bibitem{W}
Bin Wang.
\newblock Mazur's incidence structure for projective varieties. {I}.
\newblock {\em Compositio Math.}, 115(3):303--327, 1999.

\end{thebibliography}
\bibliographystyle{plain}

\end{document}